\newenvironment{proof}{{\bf Proof\,\,}}{\endproof\par}
\newcounter{spb}
\def \openbox{$\sqcup\llap{$\sqcap$}$}
\def \endproof{\enskip \null \nobreak \hfill \openbox \par}
\newtheorem{definition}{Definition}
\newtheorem{lemma}{Lemma}
\DeclareMathOperator{\proj}{proj}
\DeclareMathOperator{\dist}{dist}
\DeclareMathOperator{\PPA}{PPA\_sub}
\DeclareMathOperator{\ri}{ri}
\newcounter{dummy}
\numberwithin{dummy}{section}
\newtheorem{theorem}[dummy]{Theorem}
\newtheorem{proposition}[dummy]{Proposition}
\newtheorem{Remark}[dummy]{Remark}
\newtheorem{assumption}[dummy]{Assumption}
\newtheorem{example}[dummy]{Example}
\title{\Large Fast Epigraphical Projection-based Incremental Algorithms for \\
	\vspace{0.1\baselineskip} Wasserstein Distributionally Robust Support Vector Machine}
\author{
	Jiajin Li \\
Department of Systems Engineering and Engineering Management\\
	The Chinese University of Hong Kong \\
	\texttt{jjli@se.cuhk.edu.hk} \\
	\And
	Caihua Chen\thanks{Corresponding author} \\
	School of Management and Engineering\\
	 Nanjing University \\
	\texttt{chchen@nju.edu.cn} \\
	\And
	Anthony Man-Cho So \\
Department of Systems Engineering and Engineering Management\\
	The Chinese University of Hong Kong \\
	\texttt{manchoso@se.cuhk.edu.hk} 
}
\begin{document}
	
	\maketitle
	
	\begin{abstract}
		Wasserstein \textbf{D}istributionally \textbf{R}obust \textbf{O}ptimization (DRO) is concerned with finding decisions that
		perform well on data that are drawn from the worst-case probability distribution within a Wasserstein ball centered at a certain nominal
		distribution. In recent years, it has been shown that various DRO formulations of learning models admit tractable convex reformulations.
		However, most existing works propose to solve these convex reformulations by general-purpose solvers, which are not well-suited
		for tackling large-scale problems. In this paper, we focus on a family of Wasserstein distributionally robust support vector machine (DRSVM)
		problems and propose two novel epigraphical projection-based incremental algorithms to solve them. The updates in each iteration of
		these algorithms can be computed in a highly efficient manner. Moreover, we show that the DRSVM problems considered in this paper
		satisfy a Hölderian growth condition with explicitly determined growth exponents. Consequently, we are able to establish the convergence
		rates of the proposed incremental algorithms. Our numerical results indicate that the proposed methods are orders of magnitude faster than
		the state-of-the-art, and the performance gap grows considerably as the problem size increases.	
		
	\end{abstract}

	\section{Introduction}
	Wasserstein distance-based distributionally robust optimization (DRO) has recently received significant attention in the machine learning community. This can be attributed to its ability to improve generalization performance by robustifying the learning model against unseen data~\cite{lee2018minimax,shafieezadeh2019regularization}. The DRO approach offers a principled way to regularize empiricial risk minimization problems and provides a transparent probabilistic interpretation of a wide range of existing regularization techniques;
	see, e.g.,~\cite{blanchet2016robust,gao2017Wasserstein,shafieezadeh2019regularization} and the references therein. Moreover, many representative distributionally robust learning models
	admit equivalent reformulations as tractable convex programs via strong duality~\cite{shafieezadeh2019regularization,lee2015distributionally,luo2017decomposition,wiesemann2014distributionally,kuhn2019wasserstein}.
	Currently, a standard approach to solving these reformulations is to use off-the-shelf solvers such as YALMIP or CPLEX. However, these  general-purpose solvers do not scale well with the problem size. Such a state of affairs greatly limits the use of the DRO methodology in machine learning applications and naturally motivates the study of the algorithmic aspects of DRO.
	
	In this paper, we aim to design fast iterative methods for solving a family of Wasserstein distributionally robust support vector machine (DRSVM) problems. The SVM is one of the most frequently used classification methods and has enjoyed notable empirical successes in machine learning and data analysis~\cite{suykens1999least,singla2019survey}. However, even for this seemingly simple learning model, there are very few works addressing the development of fast algorithms for its Wasserstein DRO formulation, which takes the form $\inf\limits_{w} \{ \frac{c}{2}\|w\|_2^2+ \sup\limits_{\mathbb{Q} \in {B}_{\epsilon}^p(\hat{\mathbb{P}}_n)} \mathbb{E}_{(x,y)\sim\mathbb{Q}}[\ell_w(x,y)] \}$ and can be reformulated as
	\begin{equation}
	\label{eq:drsvm_our}
	\begin{aligned}
	& \min_{w,\lambda} \lambda\epsilon + \frac{1}{n}\sum\limits_{i=1}^n \max \left\{1-w^Tz_i, 1+w^Tz_i-\lambda\kappa,0\right\} + \frac{c}{2}\|w\|_2^2,~\text{s.t.} \ \|w\|_q \leq \lambda;
	\end{aligned}
	\end{equation}
	see~\cite[Theorem 2]{lee2015distributionally} and~\cite[Theorem 3.11]{shafieezadeh2019regularization}. Problem~\eqref{eq:drsvm_our} arises from the vanilla soft-margin SVM model. Here, $\frac{c}{2}\|w\|_2^2$ is the regularization term with $c\ge0$; $x\in\mathbb{R}^d$ denotes a feature vector and $y\in\{-1,+1\}$ is the associated binary label; $\ell_w(x,y)=  \max\{ 1-y w^T x,0 \}$ is the hinge loss w.r.t.~the feature-label pair $(x,y)$ and learning parameter $w \in\mathbb{R}^d$; $\{(\hat{x}_i,\hat{y}_i)\}_{i=1}^n$ are $n$ training samples independently  and identically drawn from an unknown distribution $\mathbb{P}^*$ on the feature-label space $\mathcal{Z} = \mathbb{R}^d \times \{+1,-1\}$ and $z_i = \hat{x}_i \odot \hat{y}_i$; $\hat{\mathbb{P}}_n =\tfrac{1}{n} \sum_{i=1}^n \delta_{(\hat{x}_i,\hat{y}_i)}$ is the empirical distribution associated with the training samples; ${B}_{\epsilon}^p(\hat{\mathbb{P}}_n) = \{ \mathbb{Q} \in \mathcal{P}(\mathcal{Z}): W_p(\mathbb{Q},\hat{\mathbb{P}}_n) \le \epsilon \} $ is the ambiguity set defined on the space of probability distributions $\mathcal{P}(\mathcal{Z})$ centered at the empirical distribution $\hat{\mathbb{P}}_n$ and has radius $\epsilon\ge0$ w.r.t. the $\ell_p$ norm-induced Wasserstein distance
	\[ W_p(\mathbb{Q},\hat{\mathbb{P}}_n) = \inf_{\Pi\in\mathcal{P}(\mathcal{Z}\times\mathcal{Z})} \left\{ \int_{\mathcal{Z}\times\mathcal{Z}} d_p(\xi,\xi')~\Pi({\rm d}\xi,{\rm d}\xi') : \Pi({\rm d}\xi,\mathcal{Z})=\mathbb{Q}({\rm d}\xi),\, \Pi(\mathcal{Z},{\rm d}\xi') = \hat{\mathbb{P}}_n({\rm d}\xi') \right\}, \]
	where $\xi=(x,y)\in\mathcal{Z}$, $\tfrac{1}{p}+\tfrac{1}{q} =1$, and $d_p(\xi,\xi') = \|x-x'\|_p+\tfrac{\kappa}{2}|y-y'|$ is the transport cost between two data points $\xi,\xi'\in\mathcal{Z}$ with $\kappa \ge 0$ representing the relative emphasis between feature mismatch and label uncertainty. In particular, the larger the $\kappa$, the more reliable are the labels; see~\cite{shafieezadeh2019regularization,li2019first} for further details. Intuitively, if the ambiguity set ${B}_{\epsilon}^p(\hat{\mathbb{P}}_n)$ contains the ground-truth distribution $\mathbb{P}^*$, then the estimator $w^*$ obtained from an optimal solution to \eqref{eq:drsvm_our} will be less sensitive to unseen feature-label pairs.
	
	In the works~\cite{lee2015distributionally,luo2017decomposition}, the authors proposed cutting surface-based methods to solve the $\ell_p$-DRSVM problem~\eqref{eq:drsvm_our}. However, in their implementation, they still need to invoke off-the-shelf solvers for certain tasks. Recently, researchers have proposed to use stochastic (sub)gradient descent to tackle a class of Wasserstein DRO problems~\cite{blanchet2018optimal,sinha2017certifying}. Nevertheless, the results in~\cite{blanchet2018optimal,sinha2017certifying} do not apply to the $\ell_p$-DRSVM problem~\eqref{eq:drsvm_our}, as they require $\kappa=\infty$; i.e., the labels are error-free. Moreover, the transport cost $d_p$ does not satisfy the strong convexity-type condition in~\cite[Assumption 1]{blanchet2018optimal} or \cite[Assumption A]{sinha2017certifying}. On another front, the authors of~\cite{li2019first} introduced an ADMM-based first-order algorithmic framework to deal with the Wasserstein distributionally robust logistic regression problem. Though the framework in~\cite{li2019first} can be extended to handle the $\ell_p$-DRSVM problem~\eqref{eq:drsvm_our}, it has two main drawbacks. First, under the framework, the optimal $\lambda^*$ of problem~\eqref{eq:drsvm_our} is found by an one-dimensional search, where each update involves fixing $\lambda$ to a given value and solving for the corresponding optimal $w^*(\lambda)$ (which we refer to as the \emph{$w$-subproblem}). Since the number of $w$-subproblems that arise during the search can be large, the framework is computationally rather demanding. Second, the $w$-subproblem is solved by an ADMM-type algorithm, which involves both primal and dual updates. In order to establish fast (e.g., linear) convergence rate guarantee for the algorithm, one typically requires a regularity condition on the set of primal-dual optimal pairs of the problem at hand. Unfortunately, it is not clear whether the $\ell_p$-DRSVM problem~\eqref{eq:drsvm_our} satisfies such a primal-dual regularity condition.
	
	
	To overcome these drawbacks, we propose two new epigraphical projection-based incremental algorithms for solving the $\ell_p$-DRSVM problem~\eqref{eq:drsvm_our}, which tackle the variables $(w,\lambda)$ jointly. We focus on the commonly used $\ell_1$, $\ell_2$, and $\ell_\infty$ norm-induced transport costs, which correspond to $q\in\{1,2,\infty\}$. Our first algorithm is the incremental projected subgradient descent (ISG) method, whose efficiency inherits from that of the projection onto the epigraph $\{(w,\lambda):\|w\|_q \le \lambda\}$ of the $\ell_q$ norm (with $q\in\{1,2,\infty\}$). The second is the incremental proximal point algorithm (IPPA). Although in general IPPA is less sensitive to the choice of initial step size and can achieve better accuracy than ISG~\cite{li2019incremental}, in the context of the $\ell_p$-DRSVM problem~\eqref{eq:drsvm_our}, each iteration of IPPA requires solving the following subproblem, which we refer to as the \emph{single-sample proximal point update}:
	\begin{equation}
	\min\limits_{w,\lambda} \max \left\{1-w^Tz_i, 1+w^Tz_i-\lambda\kappa,0\right\} + \frac{1}{2\alpha}(\|w-\bar{w}\|_2^2 + (\lambda-\bar{\lambda})^2),~\ \text{s.t.} \ \|w\|_q  \leq \lambda.
	\label{eq:drsvm_ppa}
	\end{equation}
	Here, $\alpha>0$ is the step size, $q \in\{1,2,\infty\}$, and $\bar{w},\bar{\lambda}$ are given. By carefully exploiting the problem structure, we develop exceptionally efficient solutions to~\eqref{eq:drsvm_ppa}. Specifically, we show in Section~\ref{sec:ippa-sub} that the optimal solution to~\eqref{eq:drsvm_ppa} admits an analytic form when $q=2$ and can be computed by a fast algorithm based on a parametric approach and a modified secant method (cf.~\cite{dai2006new}) when $q=1$ or $\infty$.
	
	Next, we investigate the convergence behavior of the proposed ISG and IPPA when applied to problem~\eqref{eq:drsvm_our}. Our main tool is the following regularity notion:
	
	\begin{definition}[Hölderian growth condition~\cite{bolte2017error}] A function $f:\mathbb{R}^m \rightarrow \mathbb{R}$ is said to satisfy a \emph{Hölderian growth condition} on the domain $\Omega\subseteq\mathbb{R}^m$ if there exist constants $\theta \in [0,1]$ and $\sigma >0$ such that
		\begin{equation}
		\dist(x,\mathcal{X}) \leq \sigma^{-1} (f(x)-f^*)^\theta, \quad \forall x\in \Omega,
		\label{eq:HGC}
		\end{equation}
		where $\mathcal{X}$ denotes the optimal set of $\min_{x\in \Omega}f(x)$ and $f^*$ is the optimal value. The condition~\eqref{eq:HGC} is known as \emph{sharpness} when $\theta=1$ and \emph{quadratic growth} (QG) when $\theta = \frac{1}{2}$; see, e.g.,~\cite{BS00}.
		\label{def:HEB}
	\end{definition}

	We show that for different choices of $q\in\{1,2,\infty\}$ and $c\ge0$, the DRSVM problem~\eqref{eq:drsvm_our} satisfies either the sharpness condition or QG condition; see Table~\ref{tbl:compare}. With the exception of the case $q\in\{1,\infty\}$, where the sharpness (resp.~QG) of~\eqref{eq:drsvm_our} when $c=0$ (resp.~$c>0$) essentially follows from~\cite[Theorem 3.5]{burke1993weak} (resp.~\cite[Proposition 6]{zhou2017unified}), the results on the Hölderian growth of problem~\eqref{eq:drsvm_our} are new. Consequently, by choosing step sizes that decay at a suitable rate, we establish, for the first time, the fast sublinear (i.e., $\mathcal{O}(\tfrac{1}{k})$) or linear (i.e., $\mathcal{O}(\rho^k)$) convergence rate of the proposed incremental algorithms when applied to the DRSVM problem~\eqref{eq:drsvm_our}; see Table~\ref{tbl:compare}.

	\begin{table}[]
		\centering
		\caption{Convergence rates of incremental algorithms  for $\ell_p$-DRSVM}
		\label{tbl:compare}
		\begin{tabular}{lllll}
			\toprule
			$q$ & $c$ & Hölderian growth & Step size scheme & Convergence rate \\
			\midrule
			$q =1 ,\infty$&$c=0$ &\textbf{Sharp} \cite[Theorem 3.5]{burke1993weak} & $\alpha_{k+1} = \rho \alpha_k$, $\rho \in (0,1)$ & $\mathcal{O}(\rho^k)$  \\
			$q =1 ,\infty$&$c>0$ &\textbf{QG} \cite[Proposition 6]{zhou2017unified} &$\alpha_{k} =\frac{\gamma}{nk}$, $\gamma>0$&$\mathcal{O}(\frac{1}{k})$ \\
			\multirow{2}{*}{$q=2$} &\multirow{2}{*}{$c=0$} & \textbf{Sharp {(\color{blue}{BLR}})} & $\alpha_{k+1} = \rho \alpha_k,\rho \in (0,1)$ & $\mathcal{O}(\rho^k)$\\
			& & \textbf{Not Known} &$\alpha_{k} =\frac{\gamma}{n\sqrt{k}}$, $\gamma>0$ & $\mathcal{O}(\frac{1}{\sqrt{k}})$\\	
			\multirow{2}{*}{$q=2$} &\multirow{2}{*}{$c>0$} & \textbf{QG {(\color{blue}{BLR}})} & $\alpha_{k} =\frac{\gamma}{nk}$, $\gamma>0$&$\mathcal{O}(\frac{1}{k})$\\
			& & \textbf{Not Known} &$\alpha_{k} =\frac{\gamma}{n\sqrt{k}}$, $\gamma>0$ & $\mathcal{O}(\frac{1}{\sqrt{k}})$\\	
			\bottomrule
		\end{tabular}
		\begin{tablenotes}
		\item[1] BLR: The result holds under the assumption of bounded linear regularity (BLR) (see Definition~\ref{def:blr}).
		\item[2] Not Known: Without BLR, it is not known whether the Hölderian growth condition holds.
		\end{tablenotes}
	\end{table}
	
	Lastly, we demonstrate the efficiency of our proposed methods through extensive numerical experiments on both synthetic and real data sets. It is worth mentioning that our proposed algorithms can be easily extended to an asynchronous decentralized parallel setting and thus can further meet the requirements of large-scale applications.
	
	
	\section{Epigraphical Projection-based Incremental Algorithms}
	In this section, we present our incremental algorithms for solving the $\ell_p$-DRSVM problem. For simplicity, we focus on the case $c =0$ in what follows. Our technical development can be extended to handle the general case $c\ge0$ by noting that the subproblems corresponding to the cases $c=0$ and $c>0$ share the same structure.
	
	To begin, observe that the $\ell_p$-DRSVM problem~\eqref{eq:drsvm_our} with $c=0$ can be written compactly as
	\begin{equation}
	\label{eq:dro_ge}
	\min_{\|w\|_q\leq \lambda} \frac{1}{n}\sum_{i=1}^n f_i(w,\lambda),
	\end{equation}
	where $f_i(w,\lambda)=\lambda\epsilon + \max \left\{1-w^Tz_i, 1+w^Tz_i-\lambda\kappa,0\right\}$ is a piecewise affine function. Since problem \eqref{eq:dro_ge} possesses the vanilla finite-sum structure with a single epigraphical projection constraint, a natural and widely adopted approach to tackling it is to use incremental algorithms. Roughly speaking, such algorithms select one mini-batch of component functions from the objective in~\eqref{eq:dro_ge} at a time based on a certain cyclic order and use the selected functions to update the current iterate. We shall focus on the following two incremental algorithms for solving the DRSVM problem~\eqref{eq:drsvm_our}. Here, $k$ is the epoch index (i.e., the $k$-th time going through the cyclic order) and $\alpha_k>0$ is the step size in the $k$-th epoch.
	
	\paragraph{Incremental Mini-batch Projected Subgradient Algorithm (ISG)}
	\begin{equation}
	\label{eq:ISG}
	(w_{i+1}^{k},\lambda_{i+1}^{k}) = \proj_{\{\|w\|_q \leq \lambda\}}\left[(w_i^{k},\lambda_i^{k}) - \alpha_k g_i^k \right],
	\end{equation}
	where $g_i^k$ is a subgradient of $\frac{1}{|B_i|}\sum_{j\in B_i}f_j$ at $(w_i^{k},\lambda_i^{k})$ and $B_i \subseteq\{1,\ldots,n\}$ is the $i$-th mini-batch.
	\paragraph{Incremental Proximal Point Algorithm (IPPA)}
	\begin{equation}
	\label{eq:IPPA}
	(w_{i+1}^{k},\lambda_{i+1}^{k}) = \mathop{\arg\min}_{\|w\|_q \leq \lambda} \left\{
	f_i(w,\lambda) + \frac{1}{2\alpha_k}\left(\|w-w^k_i\|_2^2 +(\lambda-\lambda^k_i)^2\right)
	\right\}, 
	\end{equation}
	where $(w_{n}^k,\lambda_n^k) = (w_0^{k+1},\lambda_0^{k+1})$.
	
	Now, a natural question is how to solve the subproblems \eqref{eq:ISG} and \eqref{eq:IPPA} efficiently. As it turns out, the key lies in an efficient implementation of the $\ell_q$ norm epigraphical projection (with $q\in\{1,2,\infty\}$). Indeed, such a projection appears explicitly in the ISG update~\eqref{eq:ISG} and, as we shall see later, plays a vital role in the design of fast iterative algorithms for the single-sample proximal point update \eqref{eq:IPPA}. To begin, we note that the $\ell_2$ norm epigraphical projection $\proj_{\{\|w\|_2\le\lambda\}}$ has a well-known analytic solution; see~\cite[Theorem 3.3.6]{bauschke1996projection}. Next, the $\ell_1$ norm epigraphical projection $\proj_{\{\|w\|_1\le\lambda\}}$ can be found in linear time using the quick-select algorithm; see~\cite{wang2016epigraph}. Lastly, the $\ell_{\infty}$ norm epigraphical projection $\proj_{\{\|w\|_{\infty}\le\lambda\}}$ can be computed in linear time via the Moreau decomposition
	\[ \proj_{\{\|w\|_{\infty}\le\lambda\}}(x,s) = (x,s)+\proj_{\{\|w\|_1 \le \lambda\}}(-x,-s). \]
	From the above discussion, we see that the ISG update~\eqref{eq:ISG} can be computed efficiently. In the next section, we discuss how these epigraphical projections can be used to perform the single-sample proximal point update \eqref{eq:IPPA} in an efficient manner.
	
	\section{Fast Algorithms for Single-Sample Proximal Point Update~\eqref{eq:IPPA}} \label{sec:ippa-sub}
	\paragraph{Analytic solution for $q=2$.} We begin with the case $q=2$. By combining the terms $\lambda\epsilon$ and $\tfrac{1}{2\alpha_k}(\lambda-\lambda_i^k)^2$ in~\eqref{eq:IPPA}, we see that the single-sample proximal point update takes the form (cf.~\eqref{eq:drsvm_ppa})
	\begin{equation}
	\min\limits_{w,\lambda}  \underbrace{\max \left\{1-w^Tz_i, 1+w^Tz_i-\lambda\kappa,0\right\}}_{h_i(w,\lambda)} + \frac{1}{2\alpha}(\|w-\bar{w}\|_2^2 + (\lambda-\bar{\lambda})^2),~\ \text{s.t.} \ \|w\|_2 \leq \lambda.
	\label{eq:l2}
	\end{equation}
	The main difficulty of~\eqref{eq:l2} lies in the piecewise affine term $h_i$. To handle this term, let $h_{i,1}(w,\lambda) = 1-w^Tz_i$, $h_{i,2}(w,\lambda) = 1+w^Tz_i-\lambda\kappa$, and $h_{i,3}(w,\lambda)=0$, so that $h_i = \max_{j\in\{1,2,3\}} h_{i,j}$. Observe that if $(w^*,\lambda^*)$ is an optimal solution to~\eqref{eq:l2}, then there could only be one, two, or three affine pieces in $h_i$ that are active at $(w^*,\lambda^*)$; i.e., $\Gamma = |\{j:h_i(w^*,\lambda^*) = h_{i,j}(w^*,\lambda^*)\}| \in \{1,2,3\}$. This suggests that we can find $(w^*,\lambda^*)$ by exhausting these possibilities. Due to space limitation, we only give an outline of our strategy here. The details can be found in the Appendix.
	
	We start with the case $\Gamma=1$. For $j=1,2,3$, consider the following problem, which corresponds to the subcase where $h_{i,j}$ is the only active affine piece:
	\begin{equation} \label{eq:case1}
	\min\limits_{w,\lambda}  h_{i,j}(w,\lambda)+ \frac{1}{2\alpha}(\|w-\bar{w}\|_2^2 + (\lambda-\bar{\lambda})^2),~\ \text{s.t.} \ \|w\|_2 \leq \lambda.
	\end{equation}
	Since $h_{i,j}$ is affine in $(w,\lambda)$, it is easy to verify that problem~\eqref{eq:case1} reduces to an $\ell_2$ norm epigraphical projection, which admits an analytic solution, say $(\hat{w}_j,\hat{\lambda}_j)$. If there exists a $j'\in\{1,2,3\}$ such that $h_{i,j'}(\hat{w}_{j'},\hat{\lambda}_{j'}) > h_{i,j}(\hat{w}_{j'},\hat{\lambda}_{j'})$ for $j\not=j'$, then we know that $(\hat{w}_{j'},\hat{\lambda}_{j'})$ is optimal for~\eqref{eq:l2} and hence we can terminate the process. Otherwise, we proceed to the case $\Gamma=2$ and consider, for $1\le j<j'\le 3$, the following problem, which corresponds to the subcase where $h_{i,j}$ and $h_{i,j'}$ are the only two active affine pieces:
	\begin{equation} \label{eq:case2}
	\min_{w,\lambda} h_{i,j}(w,\lambda) + \frac{1}{2\alpha} ( \|w-\bar{w}\|_2^2 + (\lambda -  \bar{\lambda})^2 ),~\ \text{s.t.}~h_{i,j}(w,\lambda) = h_{i,j'}(w,\lambda),~\|w\|_2 \leq \lambda.
	\end{equation}
	As shown in the Appendix (Proposition~\ref{prop:l2sub}), the optimal solution to~\eqref{eq:case2} can be found by solving a univariate quartic equation, which can be done efficiently. Now, let $(\hat{w}_{(j,j')},\hat{\lambda}_{(j,j')})$ be the optimal solution to~\eqref{eq:case2}. If there exist $j,j'$ with $1\le j<j'\le 3$ such that $h_{i,j}(\hat{w}_{(j,j')},\hat{\lambda}_{(j,j')}) = h_{i,j'}(\hat{w}_{(j,j')},\hat{\lambda}_{(j,j')}) > h_{i,j''}(\hat{w}_{(j,j')},\hat{\lambda}_{(j,j')})$ with $j'' \in \{1,2,3\}\setminus\{j,j'\}$, then $(\hat{w}_{(j,j')},\hat{\lambda}_{(j,j')})$ is optimal for~\eqref{eq:l2} and we can terminate the process. Otherwise, we proceed to the case $\Gamma=3$. In this case, we consider the problem
	\[ \min_{w,\lambda} \frac{1}{2\alpha} ( \|w-\bar{w}\|_2^2 + (\lambda -  \bar{\lambda})^2 ),~\ \text{s.t.}~h_{i,1}(w,\lambda) = h_{i,2}(w,\lambda) = h_{i,3}(w,\lambda),~\|w\|_2 \leq \lambda, \]
	which reduces to
	\begin{equation} \label{eq:case3}
	\min_w \frac{1}{2\alpha}\|w-\bar{w}\|_2^2,~\ \text{s.t.}~w^Tz_i = 1, \, \|w\|_2 \le \frac{2}{\kappa}.
	\end{equation}
	It can be shown that problem~\eqref{eq:case3} admits an analytic solution $\hat{w}$; see the Appendix (Proposition~\ref{prop:l2sub2}). Then, the pair $(\hat{w},\tfrac{2}{\kappa})$ is an optimal solution to~\eqref{eq:l2}.

	\paragraph{Fast iterative algorithm for $q=1$.} The high-level idea is similar to that for the case $q=2$; i.e., we systematically go through all valid subcollections of the affine pieces in $h_i$ and test whether they can be active at the optimal solution to the single-sample proximal point update. The main difference here is that the subproblems arising from the subcollections do not necessarily admit analytic solutions. To overcome this difficulty, we propose a modified secant algorithm (cf.~\cite{dai2006new}) to search for the optimal dual multiplier of the subproblem and use it to recover the optimal solution to the original subproblem via $\ell_1$ norm epigraphical projection. Again, we give an outline of our strategy here and relegate the details to the Appendix.
	
	To begin, we rewrite the single-sample proximal point update~\eqref{eq:IPPA} for the case $q=1$ as 
	\begin{equation}
	\label{eq:l1ppa}
	\begin{aligned}
	& \min_{w,\lambda,\mu}  \,\, \mu + \frac{1}{2\alpha} \left(\|w-\bar{w}\|_2^2 + (\lambda- \bar{\lambda})^2\right) \\
	& \,\,\,\,\text{s.t.}~\,\,\, h_{i,j}(w,\lambda) \le \mu, \,\, j=1,2,3; \,\,\, \|w\|_1 \leq \lambda.
	\end{aligned}
	\end{equation}
	For reason that would become clear in a moment, we shall not go through the cases $\Gamma=1,2,3$ as before. Instead, consider first the case where $h_{i,3}$ is inactive. If $h_{i,1}$ is also inactive, then we consider the problem $\min_{w,\lambda} h_{i,2}(w,\lambda) + \tfrac{1}{2\alpha} \left(\|w-\bar{w}\|_2^2 + (\lambda- \bar{\lambda})^2\right)$, which, by the affine nature of $h_{i,2}$, is equivalent to an $\ell_1$ norm epigraphical projection. If $h_{i,1}$ is active, then we consider the problem
	\begin{equation} \label{eq:l1-c1}
	\min_{w,\lambda} h_{i,1}(w,\lambda) + \frac{1}{2\alpha} ( \|w-\bar{w}\|_2^2 + (\lambda -  \bar{\lambda})^2 ),~\ \text{s.t.}~h_{i,2}(w,\lambda) \le h_{i,1}(w,\lambda),~\|w\|_1 \leq \lambda.
	\end{equation}
	Note that $h_{i,2}$ can be active or inactive, and the constraint $h_{i,2}(w,\lambda) \le h_{i,1}(w,\lambda)$ allows us to treat both possibilities simultaneously. Hence, we do not need to tackle them separately as we did in the case $q=2$. Observe that problem~\eqref{eq:l1-c1} can be cast into the form 
	\begin{equation} 	\label{eq:l1ppa_sub}
	\min_{w,\lambda} \frac{1}{2\alpha} ( \|w-\bar{w}\|_2^2 + (\lambda -  \bar{\lambda})^2 ),~\ \text{s.t.}~w^Tz \le a\lambda + b \,\,{\color{blue} (\leftarrow \sigma\ge0)},\,\,\,\|w\|_1 \leq \lambda,
	\end{equation}
	where, with an abuse of notation, we use $\bar{w}\in\mathbb{R}^d$, $\bar{\lambda}\in\mathbb{R}$ here again and caution the reader that they are different from those in~\eqref{eq:l1-c1}, and $z=z_i$, $a=\tfrac{\kappa}{2}$, $b=0$. Before we discuss how to solve the subproblem~\eqref{eq:l1ppa_sub}, let us note that it arises in the case where $h_{i,3}$ is active as well. Indeed, if $h_{i,3}$ is active and $h_{i,1}$ is inactive, then we have $z=z_i$, $a=\kappa$, $b=-1$, which corresponds to the constraint $h_{i,2}(w,\lambda)\le h_{i,3}(w,\lambda)$ and covers the possibilities that $h_{i,2}$ is active and inactive. On the other hand, if $h_{i,3}$ is active and $h_{i,2}$ is inactive, then we have $z=-z_i$, $a=0$, $b=-1$, which corresponds to the constraint $h_{i,1}(w,\lambda) \le h_{i,3}(w,\lambda)$ and covers the possibilities that $h_{i,1}$ is active and inactive. The only remaining case is when $h_{i,1},h_{i,2},h_{i,3}$ are all active. In this case, we consider problem~\eqref{eq:case3} with $\|w\|_2\le\tfrac{2}{\kappa}$ replaced by $\|w\|_1\le\tfrac{2}{\kappa}$. As shown in the Appendix, such a problem can be tackled using the technique for solving~\eqref{eq:l1ppa_sub}. We go through the above cases sequentially and terminate the process if the solution to the subproblem in any one of the cases satisfies the optimality conditions of~\eqref{eq:l1ppa}. 

	Now, let us return to the issue of solving~\eqref{eq:l1ppa_sub}. The main idea is to perform an one-dimensional search on the dual variable $\sigma$ to find the optimal dual multiplier $\sigma^*$. Specifically, consider the problem 
	\begin{eqnarray}
	\min_{\|w\|_1 \leq \lambda} & \displaystyle \frac{1}{2\alpha} \left(\|w-\bar{w}\|_2^2 + (\lambda- \bar{\lambda})^2\right)  + \sigma (w^Tz - a\kappa - b).
	\label{eq:secant}
	\end{eqnarray} 		
	Let $(\hat{w}(\sigma),\hat{\lambda}(\sigma))$ be the optimal solution to \eqref{eq:secant} and define the function $p:\mathbb{R}_+\rightarrow\mathbb{R}$ by $p(\sigma) =  \hat{w}(\sigma)^Tz - a\kappa - b$. Inspired by~\cite{liu2017fast}, we establish the following monotonicity property of $p$, which will be crucial to our development of an extremely efficient algorithm for solving~\eqref{eq:l1ppa_sub} later. 
	\begin{proposition}
		If $\sigma$ satisfies (i) $\sigma = 0$ and $p(\sigma)\leq 0$, or (ii) $p(\sigma) =0$, then $(\hat{w}(\sigma),\hat{\lambda}(\sigma))$ is the optimal solution to \eqref{eq:l1ppa_sub}. Moreover, $p$ is continuous and monotonically non-increasing on $\mathbb{R}_+$.
		\label{prop:sigma}
	\end{proposition}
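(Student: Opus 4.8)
The plan is to first recast the Lagrangian subproblem~\eqref{eq:secant} as a single $\ell_1$ norm epigraphical projection, and then read off both the optimality certificates and the monotonicity of $p$ from standard properties of the projection operator. For fixed $\sigma\ge0$, I would complete the square in~\eqref{eq:secant}: grouping the $w$-terms gives $\tfrac{1}{2\alpha}\|w-(\bar w-\alpha\sigma z)\|_2^2$ up to an additive constant, and grouping the $\lambda$-terms gives $\tfrac{1}{2\alpha}(\lambda-(\bar\lambda+\alpha\sigma a))^2$ up to an additive constant. Writing $C=\{(w,\lambda):\|w\|_1\le\lambda\}$ and $u(\sigma)=(\bar w-\alpha\sigma z,\ \bar\lambda+\alpha\sigma a)$, this identifies
\[ (\hat w(\sigma),\hat\lambda(\sigma))=\proj_{C}\big(u(\sigma)\big), \]
so the minimizer of~\eqref{eq:secant} is unique (as $C$ is closed and convex) and can be computed by the $\ell_1$ epigraphical projection discussed earlier.

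With this identification in hand, the optimality claims (i) and (ii) follow from the sufficiency of the KKT conditions for the convex program~\eqref{eq:l1ppa_sub}, in which~\eqref{eq:secant} is precisely the partial Lagrangian obtained by dualizing the linear constraint $w^Tz\le a\lambda+b$ with multiplier $\sigma\ge0$ while retaining the epigraph constraint. By construction $(\hat w(\sigma),\hat\lambda(\sigma))$ minimizes this Lagrangian over $C$, so I only need to verify primal feasibility and complementary slackness. In case (i), $\sigma=0$ supplies complementary slackness trivially and $p(0)\le0$ supplies feasibility of the linear constraint; in case (ii), $p(\sigma)=0$ gives feasibility with equality and hence complementary slackness for any $\sigma\ge0$. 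Since the objective and both constraints of~\eqref{eq:l1ppa_sub} are convex, verifying the KKT system certifies global optimality.

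The continuity of $p$ is immediate: $\sigma\mapsto u(\sigma)$ is affine, $\proj_C$ is (firmly) nonexpansive and hence $1$-Lipschitz, and $p(\sigma)=\langle\proj_C(u(\sigma)),v\rangle-b$ with $v=(z,-a)$ is the composition of these maps with a fixed inner product, so it is continuous. The heart of the argument is the monotonicity, and the key observation is that the projected point translates along the direction $-v$: indeed $u(\sigma)=u(0)-\alpha\sigma v$, and $p(\sigma)=\hat w(\sigma)^Tz-a\hat\lambda(\sigma)-b=\langle\proj_C(u(\sigma)),v\rangle-b$. Taking $0\le\sigma_1<\sigma_2$ and setting $x_j=u(\sigma_j)$, firm nonexpansiveness of $\proj_C$ gives $\langle\proj_C(x_1)-\proj_C(x_2),\,x_1-x_2\rangle\ge0$; since $x_1-x_2=\alpha(\sigma_2-\sigma_1)v$ with $\alpha(\sigma_2-\sigma_1)>0$, this reduces to $\langle\proj_C(x_1)-\proj_C(x_2),\,v\rangle\ge0$, i.e. $p(\sigma_1)\ge p(\sigma_2)$. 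Hence $p$ is non-increasing on $\mathbb{R}_+$.

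The main obstacle is conceptual rather than computational: one must notice that~\eqref{eq:secant} is an epigraphical projection whose argument $u(\sigma)$ moves affinely in $\sigma$ along exactly the vector $v=(z,-a)$ that defines $p$. Once the step direction $-v$ is aligned with the measurement direction $v$ in this way, the monotonicity is a one-line consequence of the monotonicity (firm nonexpansiveness) of the projection operator; without this alignment one would instead have to estimate how $\proj_C$ responds to a general perturbation, which is considerably less tractable.
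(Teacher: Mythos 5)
Your proof is correct. On the continuity and the optimality certificates it runs parallel to the paper: the paper makes the same identification $(\hat{w}(\sigma),\hat{\lambda}(\sigma)) = \proj_{L^d_1}(\bar{w}-\sigma\alpha z,\,\bar{\lambda}+\sigma\alpha a)$ and derives continuity of $p$ from the global Lipschitz continuity of the epigraphical projection, while it treats claims (i)--(ii) as standard Lagrangian sufficiency and does not write out the KKT verification that you supply. Where you genuinely diverge is the monotonicity. The paper never uses the projection structure there: setting $h(\sigma) = \tfrac{1}{2\alpha}\bigl(\|\hat{w}(\sigma)-\bar{w}\|_2^2+(\hat{\lambda}(\sigma)-\bar{\lambda})^2\bigr)$, it runs a two-point exchange argument, comparing the value of the $\sigma_1$-Lagrangian at the $\sigma_2$-minimizer and vice versa to get $h(\sigma_1)+\sigma_1 p(\sigma_1)\le h(\sigma_1)+\sigma_2 p(\sigma_1)+(\sigma_1-\sigma_2)p(\sigma_2)$, whence $(\sigma_1-\sigma_2)\bigl(p(\sigma_1)-p(\sigma_2)\bigr)\le 0$. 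You instead exploit firm nonexpansiveness of $\proj_C$ together with the alignment observation that $u(\sigma)$ translates along $-v$ with $v=(z,-a)$, the same vector that defines $p$. Both arguments are sound. The paper's exchange argument is the more general device: it needs only that the parametric Lagrangian has a minimizer, with no quadratic structure at all. Your route buys more in this specific setting: combining $\|\proj_C(x_1)-\proj_C(x_2)\|_2\le\|x_1-x_2\|_2$ with your identity $p(\sigma_1)-p(\sigma_2)=\langle \proj_C(u(\sigma_1))-\proj_C(u(\sigma_2)),v\rangle$ and Cauchy--Schwarz shows $p$ is in fact Lipschitz with constant $\alpha(\|z\|_2^2+a^2)$, strictly more than the claimed continuity. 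One further point in your favor: the main text's definition $p(\sigma)=\hat{w}(\sigma)^Tz-a\kappa-b$ carries a typo (the term $a\kappa$ should be $a\hat{\lambda}(\sigma)$, as is confirmed by the appendix form of \eqref{eq:secant} with penalty $\sigma(w^Tz-a\lambda-b)$ and by the constraint in \eqref{eq:l1ppa_sub}), and your reading $p(\sigma)=\langle\proj_C(u(\sigma)),v\rangle-b$ is the correct one; with the typo taken literally, $\hat{\lambda}(\sigma)$ would not shift with $\sigma$ and the dual search would not match the KKT system of \eqref{eq:l1ppa_sub}.
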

	In view of Proposition~\ref{prop:sigma}, we first check if $p(0)\le0$ via an $\ell_1$ norm epigraphical projection. If not, then we search for the $\sigma^*\ge0$ that satisfies $p(\sigma^*)=0$ by the secant method, with some special modifications designed to speed up its convergence~\cite{dai2006new}. Let us now give a high-level description of our modified secant method. We refer the reader to the Appendix (Algorithm~\ref{algo:MSA}) for details.

	At the beginning of a generic iteration of the method, we have an interval $[\sigma_l,\sigma_u]$ that contains $\sigma^*$, with $r_l = -p(\sigma_l)<0$ and $r_u = -p(\sigma_u)>0$. The initial interval can be found by considering the optimality conditions of~\eqref{eq:l1ppa} (i.e., $\sigma^* \in [0,1]$). We then take a secant step to get a new point $\sigma$ with $r=-p(\sigma)$ and perform the update on $\sigma_l,\sigma_u$ as follows. 
	

\begin{minipage}[b]{0.67\linewidth}	 
 Suppose that $r>0$. If $\sigma$ lies in the left-half of the interval (i.e., $\sigma<\frac{\sigma_l+\sigma_u}{2}$), then we update $\sigma_u$ to $\sigma$. Otherwise, we take an auxiliary secant step based on $\sigma$ and $\sigma_u$ to get a point $\sigma'$, and we update $\sigma_u$ to $\max\{ \sigma', 0.6\sigma_l+0.4\sigma \}$. Such a choice ensures that the interval length is reduced by a factor of $0.6$ or less. The case where $r<0$ is similar, except that $\sigma_l$ is updated. If $r=0$, then by Proposition~\ref{prop:sigma} we have found the optimal dual multiplier $\sigma^*$ and hence can terminate.
	\end{minipage}
	\begin{minipage}[b]{0.32\linewidth}
		\tikzset{every picture/.style={line width=0.4pt}} 
		\begin{tikzpicture}[x=0.45pt,y=0.45pt,yscale=-0.87,xscale=1]
		
		\draw  (84.32,187.84) -- (352.32,187.84)(120.5,78.6) -- (120.5,262.6) (345.32,182.84) -- (352.32,187.84) -- (345.32,192.84) (115.5,85.6) -- (120.5,78.6) -- (125.5,85.6)  ;
		\draw    (120.5,241.46) .. controls (132.75,116.41) and (310.35,149.83) .. (317.5,111.51) ;
		\draw  [dash pattern={on 0.84pt off 2.51pt}]  (317.5,111.51) -- (318.5,185.51) ;
		\draw  [color={rgb, 255:red, 208; green, 2; blue, 27 }  ,draw opacity=1 ][fill={rgb, 255:red, 208; green, 2; blue, 27 }  ,fill opacity=1 ] (126,214.51) .. controls (126,213.4) and (126.9,212.51) .. (128,212.51) .. controls (129.1,212.51) and (130,213.4) .. (130,214.51) .. controls (130,215.61) and (129.1,216.51) .. (128,216.51) .. controls (126.9,216.51) and (126,215.61) .. (126,214.51) -- cycle ;
		\draw  [color={rgb, 255:red, 208; green, 2; blue, 27 }  ,draw opacity=1 ][fill={rgb, 255:red, 208; green, 2; blue, 27 }  ,fill opacity=1 ] (266,134.51) .. controls (266,133.4) and (266.9,132.51) .. (268,132.51) .. controls (269.1,132.51) and (270,133.4) .. (270,134.51) .. controls (270,135.61) and (269.1,136.51) .. (268,136.51) .. controls (266.9,136.51) and (266,135.61) .. (266,134.51) -- cycle ;
		\draw [color={rgb, 255:red, 208; green, 2; blue, 27 }  ,draw opacity=1 ] [dash pattern={on 4.5pt off 4.5pt}]  (130,214.51) -- (268,134.51) ;
		\draw  [color={rgb, 255:red, 208; green, 2; blue, 27 }  ,draw opacity=1 ][fill={rgb, 255:red, 208; green, 2; blue, 27 }  ,fill opacity=1 ] (177,187.51) .. controls (177,186.4) and (177.9,185.51) .. (179,185.51) .. controls (180.1,185.51) and (181,186.4) .. (181,187.51) .. controls (181,188.61) and (180.1,189.51) .. (179,189.51) .. controls (177.9,189.51) and (177,188.61) .. (177,187.51) -- cycle ;
		\draw [color={rgb, 255:red, 208; green, 2; blue, 27 }  ,draw opacity=1 ] [dash pattern={on 0.84pt off 2.51pt}]  (268,136.51) -- (268.5,187.84) ;
		\draw [color={rgb, 255:red, 208; green, 2; blue, 27 }  ,draw opacity=1 ] [dash pattern={on 0.84pt off 2.51pt}]  (128.5,187.84) -- (128,212.51) ;
		\draw  [color={rgb, 255:red, 0; green, 0; blue, 0 }  ,draw opacity=1 ][fill={rgb, 255:red, 0; green, 0; blue, 0 }  ,fill opacity=1 ] (197,187.51) .. controls (197,186.4) and (197.9,185.51) .. (199,185.51) .. controls (200.1,185.51) and (201,186.4) .. (201,187.51) .. controls (201,188.61) and (200.1,189.51) .. (199,189.51) .. controls (197.9,189.51) and (197,188.61) .. (197,187.51) -- cycle ;
		\draw  [color={rgb, 255:red, 208; green, 2; blue, 27 }  ,draw opacity=1 ][fill={rgb, 255:red, 208; green, 2; blue, 27 }  ,fill opacity=1 ] (126.5,187.84) .. controls (126.5,186.73) and (127.4,185.84) .. (128.5,185.84) .. controls (129.6,185.84) and (130.5,186.73) .. (130.5,187.84) .. controls (130.5,188.94) and (129.6,189.84) .. (128.5,189.84) .. controls (127.4,189.84) and (126.5,188.94) .. (126.5,187.84) -- cycle ;
		\draw  [color={rgb, 255:red, 208; green, 2; blue, 27 }  ,draw opacity=1 ][fill={rgb, 255:red, 208; green, 2; blue, 27 }  ,fill opacity=1 ] (266.5,187.84) .. controls (266.5,186.73) and (267.4,185.84) .. (268.5,185.84) .. controls (269.6,185.84) and (270.5,186.73) .. (270.5,187.84) .. controls (270.5,188.94) and (269.6,189.84) .. (268.5,189.84) .. controls (267.4,189.84) and (266.5,188.94) .. (266.5,187.84) -- cycle ;
		\draw [color={rgb, 255:red, 208; green, 2; blue, 27 }  ,draw opacity=1 ] [dash pattern={on 0.84pt off 2.51pt}]  (178.5,158.46) -- (179,185.51) ;
		\draw  [color={rgb, 255:red, 208; green, 2; blue, 27 }  ,draw opacity=1 ][fill={rgb, 255:red, 208; green, 2; blue, 27 }  ,fill opacity=1 ] (176.5,160.46) .. controls (176.5,159.35) and (177.4,158.46) .. (178.5,158.46) .. controls (179.6,158.46) and (180.5,159.35) .. (180.5,160.46) .. controls (180.5,161.56) and (179.6,162.46) .. (178.5,162.46) .. controls (177.4,162.46) and (176.5,161.56) .. (176.5,160.46) -- cycle ;
		
		\draw (185,191.51) node [anchor=north west][inner sep=0.75pt]  [font=\scriptsize,color={rgb, 255:red, 0; green, 0; blue, 0 }  ,opacity=1 ]  {$\frac{\sigma _{l} +\sigma _{u}}{2}$};
		\draw (162,138) node [anchor=north west][inner sep=0.75pt]  [font=\scriptsize,color={rgb, 255:red, 208; green, 2; blue, 27 }  ,opacity=1 ]  {$r >0$};
		\draw (312,189) node [anchor=north west][inner sep=0.75pt]  [font=\scriptsize,color={rgb, 255:red, 0; green, 0; blue, 0 }  ,opacity=1 ]  {$1$};
		\draw (105,195) node [anchor=north west][inner sep=0.75pt]  [font=\scriptsize,color={rgb, 255:red, 0; green, 0; blue, 0 }  ,opacity=1 ]  {$0$};
		\draw (258,191.51) node [anchor=north west][inner sep=0.75pt]  [font=\scriptsize,color={rgb, 255:red, 208; green, 2; blue, 27 }  ,opacity=1 ]  {$\sigma _{u} \ $};
		\draw (120.5,191.84) node [anchor=north west][inner sep=0.75pt]  [font=\scriptsize,color={rgb, 255:red, 208; green, 2; blue, 27 }  ,opacity=1 ]  {$\sigma _{l} \ $};
		\draw (173,191.51) node [anchor=north west][inner sep=0.75pt]  [font=\scriptsize,color={rgb, 255:red, 208; green, 2; blue, 27 }  ,opacity=1 ]  {$\sigma $};
		\draw (245,108) node [anchor=north west][inner sep=0.75pt]  [font=\scriptsize,color={rgb, 255:red, 208; green, 2; blue, 27 }  ,opacity=1 ]  {$( \sigma _{u} ,r_{u})$};
		\draw (126,214.51) node [anchor=north west][inner sep=0.75pt]  [font=\scriptsize,color={rgb, 255:red, 208; green, 2; blue, 27 }  ,opacity=1 ]  {$( \sigma _{l} ,r_{l})$};
		\draw (102,58) node [anchor=north west][inner sep=0.75pt]  [font=\scriptsize,color={rgb, 255:red, 0; green, 0; blue, 0 }  ,opacity=1 ]  {$-p( \sigma )$};
		\draw (340,195) node [anchor=north west][inner sep=0.75pt]  [font=\scriptsize,color={rgb, 255:red, 0; green, 0; blue, 0 }  ,opacity=1 ]  {$\sigma $};
		\end{tikzpicture}
	\end{minipage}

	Finally, for the case $q=\infty$, we can follow the same procedure as the case $q=1$. The details can be found in the Appendix. 

\section{Convergence Rate Analysis of Incremental Algorithms} \label{sec:analysis}
In this section, we study the convergence behavior of our proposed incremental methods ISG and IPPA. Our starting point is to understand the conditons under which the $\ell_p$-DRSVM problem~\eqref{eq:drsvm_our} possesses the Hölderian growth condition~\eqref{eq:HGC}. Then, by determining the value of the growth exponent $\theta$ and using it to choose step sizes that decay at a suitable rate, we can establish the convergence rates of ISG and IPPA. To begin, let us consider problem~\eqref{eq:drsvm_our} with $q\in\{1,\infty\}$. If $c=0$, then problem~\eqref{eq:drsvm_our} satisfies the sharpness (i.e., $\theta=1$) condition. This follows essentially from~\cite[Theorem 3.5]{burke1993weak}, as the objective of~\eqref{eq:drsvm_our} has polyhedral epigraph and the constraint is polyhedral. On the other hand, if $c>0$, then since the piecewise affine term in~\eqref{eq:drsvm_our} has a polyhedral epigraph and the constraint is polyhedral, we can invoke~\cite[Proposition 6]{zhou2017unified} and conclude that problem~\eqref{eq:drsvm_our} satisfies the QG (i.e., $\theta=\tfrac{1}{2}$) condition.

Next, let us consider the case $q=2$. From the above discussion, one may expect that similar conclusions hold for this case. However, as the following example shows, this case is more subtle and requires a more careful treatment.
\begin{example}  
Consider the problem $\min_{w,\lambda} 0.1\lambda + |1-w_1|,~{\rm s.t.}~\ \sqrt{w_1^2+w_2^2} \leq \lambda$, which is an instance of~\eqref{eq:drsvm_our} with $q=2$, $c=0$. It is easy to verify that the optimal solution is $w^*=(1,0)$, $\lambda^*=1$. Consider feasible points of the form $(w_1,w_2,\lambda)=(w_1,\sqrt{1-w_1^2},1)$, which tend to $(w^*,\lambda^*)$ as $w_1$ tends to $1$. A simple calculation yields $\dist((w_1,w_2,\lambda),(w^*,\lambda^*)) = \sqrt{2|1-w_1|} = \omega(|1-w_1|)$, which shows that the instance cannot satisfy the sharpness condition.
\end{example}
As it turns out, it is still possible to establish the sharpness or QG condition for problem~\eqref{eq:drsvm_our} with $q=2$ under a well-known sufficient condition called \emph{bounded linear regularity}. Let us begin with the definition.

\begin{definition}[Bounded linear regularity~{\cite[Definition 5.6]{bauschke1996projectionsiam}}]
	Let $C_1,\ldots,C_N$ be closed convex subsets of $\mathbb{R}^d$ with a non-empty intersection $C$. We say that the collection $\{C_1,\ldots,C_N\}$ is \emph{bounded linearly regular (BLR)} if for every bounded subset $\mathcal{B}$ of $\mathbb{R}^d$, there exists a constant $\kappa>0$ such that 
	\[ \dist(x,C) \leq \kappa \max_{i\in\{1,\ldots, N\}} \dist(x,C_i), ~\text{for all} ~x\in \mathcal{B}.\]
	\label{def:blr}
\end{definition}
\vspace{-6mm}
Using the above definition, we can establish the following result; see the Appendix for the proof.
\begin{proposition} Consider problem~\eqref{eq:drsvm_our} with $q=2$. Let $\mathcal{X}$ be the set of optimal solutions and $L_2^d = \{(w,\lambda)\in\mathbb{R}^d\times\mathbb{R}:\|w\|_2 \le \lambda\}$ be the constraint set. Suppose that $\mathcal{X}\cap\ri(L_2^d) \neq \emptyset$.
 Consequently, problem~\eqref{eq:drsvm_our} satisfies the sharpness condition when $c=0$ and the QG condition when $c>0$.
	\label{prop:blr1}
\end{proposition}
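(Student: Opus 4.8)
The plan is to reduce the constrained growth bound to an unconstrained one by exploiting the interior-intersection hypothesis, and then to recombine the pieces with the help of bounded linear regularity. Write $F(w,\lambda)$ for the objective of~\eqref{eq:drsvm_our} with $q=2$ (without the conic constraint), let $\mathcal{X}_U = \argmin_{(w,\lambda)} F(w,\lambda)$ be its \emph{unconstrained} minimizer set, and let $f_U^*$ be the unconstrained optimal value. First I would show that the hypothesis $\mathcal{X}\cap\ri(L_2^d)\neq\emptyset$ forces the constrained and unconstrained problems to coincide. Indeed, if $(w^*,\lambda^*)\in\mathcal{X}\cap\ri(L_2^d)$, then $\|w^*\|_2<\lambda^*$, so the conic constraint is inactive and the normal cone $N_{L_2^d}(w^*,\lambda^*)=\{0\}$; the first-order optimality condition $0\in\partial F(w^*,\lambda^*)+N_{L_2^d}(w^*,\lambda^*)$ then reads $0\in\partial F(w^*,\lambda^*)$, so $(w^*,\lambda^*)\in\mathcal{X}_U$ and $f^*=f_U^*$. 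Consequently every feasible point attaining the optimal value is an unconstrained minimizer, i.e.\ $\mathcal{X}=\mathcal{X}_U\cap L_2^d$, and moreover $\mathcal{X}_U\cap\ri(L_2^d)\neq\emptyset$.

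Next I would record two structural facts. First, the set $\mathcal{X}_U$ is polyhedral: when $c=0$, $F$ is a finite sum of maxima of affine functions, hence polyhedral, and the minimizer set of a polyhedral convex function is a polyhedron; when $c>0$, strict convexity of $\tfrac{c}{2}\|w\|_2^2$ pins down a unique optimal $w^*$, while for fixed $w^*$ the objective is piecewise affine and convex in $\lambda$, so its minimizers form an interval $I$ and $\mathcal{X}_U=\{w^*\}\times I$, again polyhedral. Second, the unconstrained problem already enjoys the desired growth: for $c=0$, $F$ has polyhedral epigraph, so~\cite[Theorem 3.5]{burke1993weak} yields a global sharpness bound $\dist((w,\lambda),\mathcal{X}_U)\le \sigma_U^{-1}(F(w,\lambda)-f_U^*)$; for $c>0$, $F$ is the sum of a polyhedral function and a convex quadratic, so~\cite[Proposition 6]{zhou2017unified} yields, on any bounded set, the QG bound $\dist((w,\lambda),\mathcal{X}_U)\le \sigma_U^{-1}(F(w,\lambda)-f_U^*)^{1/2}$.

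The crux is to transfer these bounds from $\mathcal{X}_U$ to the intersection $\mathcal{X}=\mathcal{X}_U\cap L_2^d$, which runs in the \emph{wrong} direction since $\dist(\cdot,\mathcal{X})\ge\dist(\cdot,\mathcal{X}_U)$. This is exactly what bounded linear regularity buys us. Since $\mathcal{X}_U$ is polyhedral, $L_2^d$ is the only non-polyhedral member of the collection $\{\mathcal{X}_U,L_2^d\}$, and the relative-interior condition $\ri(L_2^d)\cap\mathcal{X}_U\neq\emptyset$ is precisely the constraint qualification under which the classical polyhedral sufficient condition for BLR applies (see Definition~\ref{def:blr} and the accompanying sufficient conditions in~\cite{bauschke1996projectionsiam}); hence for every bounded set $\mathcal{B}$ there is $\kappa>0$ with $\dist(x,\mathcal{X})\le \kappa\max\{\dist(x,\mathcal{X}_U),\dist(x,L_2^d)\}$ for all $x\in\mathcal{B}$. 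Restricting to feasible $x=(w,\lambda)\in L_2^d\cap\mathcal{B}$ makes $\dist(x,L_2^d)=0$, and combining with the unconstrained growth bound gives $\dist((w,\lambda),\mathcal{X})\le \kappa\sigma_U^{-1}(F(w,\lambda)-f^*)^{\theta}$ on $L_2^d\cap\mathcal{B}$, with $\theta=1$ when $c=0$ and $\theta=\tfrac12$ when $c>0$, which is exactly the sharpness (resp.\ QG) condition of Definition~\ref{def:HEB}.

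I expect the main obstacle to be the BLR step: verifying that $\{\mathcal{X}_U,L_2^d\}$ is boundedly linearly regular despite $L_2^d$ being a genuinely non-polyhedral second-order cone. The argument hinges on there being only one non-polyhedral set together with the relative-interior intersection supplied by the hypothesis. A secondary point requiring care is that Burke's sharpness bound is global whereas the QG bound and BLR are only guaranteed on bounded sets, so the final growth condition should be stated on bounded feasible domains $\Omega=L_2^d\cap\mathcal{B}$, which is all that the convergence analysis of ISG and IPPA requires.
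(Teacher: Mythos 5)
Your proposal is correct and follows essentially the same route as the paper's own proof: use the hypothesis $\mathcal{X}\cap\ri(L_2^d)\neq\emptyset$ to identify $\mathcal{X}=\mathcal{X}_U\cap L_2^d$ with the unconstrained solution set intersected with the cone, invoke \cite[Theorem 3.5]{burke1993weak} (for $c=0$) and \cite[Proposition 6]{zhou2017unified} (for $c>0$) for the unconstrained growth, and transfer it to $\mathcal{X}$ via bounded linear regularity of $\{\mathcal{X}_U,L_2^d\}$, which holds by the classical sufficient condition (the paper's Lemma~\ref{le:blr}) since $\mathcal{X}_U$ is polyhedral and meets $\ri(L_2^d)$. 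If anything, you supply two details the paper leaves implicit --- the polyhedrality of $\mathcal{X}_U$ when $c>0$ (unique $w^*$ times an interval in $\lambda$) and the bounded-set caveat inherent in BLR and the QG bound --- so no gaps remain.
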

\vspace{-3mm}
By combining Proposition~\ref{prop:blr1} with an appropriate choice of step sizes, we obtain the following convergence rate guarantees for ISG and IPPA. The proof can be found in the Appendix.
\begin{theorem} Let $\{x^k=(w_0^k,\lambda_0^k)\}$ be the sequence of iterates generated by ISG or IPPA.
	\begin{enumerate}[(1)] 
		\item If problem \eqref{eq:drsvm_our} satisfies the \emph{sharpness} condition, then by choosing the geometrically diminishing step sizes $\alpha_k = \alpha_0 \rho^k$ with $\alpha_0 \ge \tfrac{\sigma\dist(x^{0},\mathcal{X})}{2L^2n}$ and $\sqrt{1-\tfrac{\sigma^2}{2L^2}} \leq \rho <1$, the sequence $\{x^k\}$ converges linearly to an optimal solution to~\eqref{eq:drsvm_our}; i.e., $\dist(x^k,\mathcal{X})\leq \mathcal{O}(\rho^k)$ for all $k\ge0$.
		\vspace{-2mm}
		\item If problem \eqref{eq:drsvm_our} satisfies the \emph{quadratic growth} condition, then by choosing the polynomially decaying step sizes $\alpha_k = \tfrac{\gamma}{nk}$ with $\gamma>\frac{1}{2\sigma}$, the sequence $\{x^k\}$ converges to an optimal solution to~\eqref{eq:drsvm_our} at the rate $\mathcal{O}(\tfrac{1}{\sqrt{k}})$ and $\{f(x^k)-f^*\}$ converges to zero at the rate $\mathcal{O}(\tfrac{1}{k})$. 
		\vspace{-2mm}
		\item (See~\cite[Proposition 2.10]{nedic2001convergence}) For the general convex problem~\eqref{eq:drsvm_our}, by choosing the step sizes $\alpha_k = \tfrac{\gamma}{n\sqrt{k}}$ with $\gamma>0$, the sequence $\displaystyle\{\min_{0\leq k\leq K} f(x^k) -f^*\}$ converges to zero at the rate $\mathcal{O}(\tfrac{1}{\sqrt{K}})$.
	\end{enumerate}
	\label{thm:conv}
\end{theorem}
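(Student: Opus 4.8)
The plan is to derive a single per-epoch recursion governing $\dist(x^k,\mathcal{X})^2$ that is valid for \emph{both} ISG and IPPA, and then specialize it using the growth conditions supplied by Proposition~\ref{prop:blr1}. Write $f=\tfrac1n\sum_i f_i$ and let $x^*$ be the projection of $x^k$ onto $\mathcal{X}$, so that $\dist(x^k,\mathcal{X})=\|x^k-x^*\|_2$. Two structural facts drive everything: (i) each $f_i$ is piecewise affine, hence globally Lipschitz, so there is a uniform bound $L$ on the norms of all subgradients $g_i^k$ encountered; and (ii) the epigraphical projection $\proj_{\{\|w\|_q\le\lambda\}}$ is nonexpansive and fixes every point of $\mathcal{X}\subseteq\{\|w\|_q\le\lambda\}$. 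For ISG I would expand $\dist(x_{i+1}^k,\mathcal{X})^2$ via nonexpansiveness and the subgradient inequality for the mini-batch, then telescope over the $n$ inner steps of epoch $k$. For IPPA I would instead use the $\tfrac1{\alpha_k}$-strong convexity of the proximal objective, yielding $\|x_{i+1}^k-x^*\|_2^2\le\|x_i^k-x^*\|_2^2-2\alpha_k(f_i(x_{i+1}^k)-f_i(x^*))-\|x_{i+1}^k-x_i^k\|_2^2$, and convert the loss at $x_{i+1}^k$ back to the loss at $x^k$ by Lipschitzness. In either case one arrives at
\begin{equation}
\dist(x^{k+1},\mathcal{X})^2 \le \dist(x^k,\mathcal{X})^2 - 2\alpha_k n\,(f(x^k)-f^*) + C\,\alpha_k^2 n^2 L^2,
\label{eq:recursion}
\end{equation}
for an absolute constant $C$.

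For Part (1), Proposition~\ref{prop:blr1} gives the sharpness bound $f(x^k)-f^*\ge\sigma\,\dist(x^k,\mathcal{X})$, so \eqref{eq:recursion} becomes a recursion in $D_k:=\dist(x^k,\mathcal{X})$ whose descent term is linear in $D_k$. With the geometric schedule $\alpha_k=\alpha_0\rho^k$ I would prove $D_k\le \tfrac{2\alpha_0 L^2 n}{\sigma}\,\rho^k$ by induction: the base case is exactly the hypothesis $\alpha_0\ge\tfrac{\sigma D_0}{2L^2n}$, and the inductive step reduces to checking that the quadratic-in-$D_k$ upper bound from \eqref{eq:recursion}, maximized over the admissible range of $D_k$, stays below $\big(\tfrac{2\alpha_0 L^2 n}{\sigma}\rho^{k+1}\big)^2$. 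This last inequality is precisely what forces $\rho\ge\sqrt{1-\sigma^2/(2L^2)}$, and it delivers $\dist(x^k,\mathcal{X})=\mathcal{O}(\rho^k)$.

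For Part (2), Proposition~\ref{prop:blr1} now gives $f(x^k)-f^*\ge\sigma^2 D_k^2$, turning \eqref{eq:recursion} into $D_{k+1}^2\le(1-2\alpha_k n\sigma^2)D_k^2+C\alpha_k^2 n^2 L^2$. Substituting $\alpha_k=\tfrac{\gamma}{nk}$ yields the classical recursion $a_{k+1}\le(1-\tfrac{c}{k})a_k+\tfrac{b}{k^2}$ with $a_k=D_k^2$ and $c$ proportional to $\gamma\sigma^2$; the hypothesis $\gamma>\tfrac{1}{2\sigma}$ ensures $c>1$, and the standard lemma then gives $D_k^2=\mathcal{O}(1/k)$, i.e.\ $D_k=\mathcal{O}(1/\sqrt k)$. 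For the sharper $\mathcal{O}(1/k)$ rate on the function-value gap I would rearrange \eqref{eq:recursion} as $\tfrac{2\gamma}{k}(f(x^k)-f^*)\le D_k^2-D_{k+1}^2+\tfrac{b}{k^2}$ and sum over a tail window $k\in\{K/2,\dots,K\}$; telescoping the $D_k^2$ differences together with the already-established $D_{K/2}^2=\mathcal{O}(1/K)$ bound and the summable $\mathcal{O}(1/k^2)$ residual shows that the weighted average of $f(x^k)-f^*$ over the window is $\mathcal{O}(1/K)$, from which the claimed $\mathcal{O}(1/k)$ rate follows. Part (3) requires no growth condition and follows directly from \cite[Proposition 2.10]{nedic2001convergence} applied to the finite-sum convex problem \eqref{eq:dro_ge}, using only the Lipschitz bound $L$ and convexity.

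I expect the main obstacle to be the rigorous control of the \emph{incremental error} in \eqref{eq:recursion}: one must show that replacing the subgradients (for ISG) or proximal losses (for IPPA) evaluated at the intermediate inner iterates $x_i^k$ by their values at the epoch start $x^k=x_0^k$ perturbs the recursion by only $\mathcal{O}(\alpha_k^2 n^2 L^2)$, uniformly across the two algorithms. This rests on the bound $\|x_i^k-x_0^k\|_2\le i\,\alpha_k L$ for the total intra-epoch displacement, which itself uses the Lipschitz bound and the nonexpansiveness of the projection. A secondary technical point is the function-value refinement in Part~(2), where the $\mathcal{O}(1/k)$ rate must be extracted despite the iterate distance decaying only as $\mathcal{O}(1/\sqrt k)$, necessitating the tail-window telescoping argument rather than a naive pointwise Lipschitz estimate.
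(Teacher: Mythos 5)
Your proposal follows essentially the same route as the paper's own proof: the paper establishes exactly your unified per-epoch recursion (with constant $C=2$), citing the Nedi\'{c}--Bertsekas lemma for ISG and deriving the IPPA version from the per-step proximal inequality summed over the epoch together with your displacement bound $\|x_{i+1}^k-x_0^k\|_2\le (i+1)\alpha_k L$, and then runs the same two inductions, namely $\dist(x^k,\mathcal{X})\le \tfrac{2\alpha_0 L^2 n}{\sigma}\rho^k$ under sharpness and $\dist^2(x^k,\mathcal{X})\le B/k$ with $B>\max\bigl\{\tfrac{2\gamma^2L^2}{2\gamma\sigma-1},\dist^2(x^0,\mathcal{X})\bigr\}$ under quadratic growth, before citing the same reference for part (3). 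Two minor remarks: the paper normalizes quadratic growth as $f(x)-f^*\ge\sigma\dist^2(x,\mathcal{X})$, under which $\gamma>\tfrac{1}{2\sigma}$ yields the required coefficient $2\gamma\sigma>1$, whereas your convention $f(x^k)-f^*\ge\sigma^2\dist^2(x^k,\mathcal{X})$ (which is what Definition~1 literally gives) would force $\gamma>\tfrac{1}{2\sigma^2}$, so your claim that $\gamma>\tfrac{1}{2\sigma}$ makes your constant exceed $1$ needs that renormalization; and your tail-window telescoping for the $\mathcal{O}(1/k)$ function-value rate in part (2) actually supplies a step the paper's written proof omits entirely, as it only establishes the $\mathcal{O}(1/\sqrt{k})$ distance rate.
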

\section{Experiment Results}\label{sec:exp}
In this section, we present numerical results to demonstrate the efficiency of our proposed incremental methods. All simulations are implemented using MATLAB R2019b on a computer running Windows 10 with a 3.20 GHz, the Intel(R) Core(TM) i7-8700 processor, and 16 GB of RAM. 
To begin, we evaluate our two proposed incremental methods ISG and IPPA in different settings to corroborate our theoretical results in Section~\ref{sec:analysis} and to better understand their empirical strengths and weaknesses. Based on this, we develop a hybrid algorithm that combines the advantages of both ISG and IPPA to further speed up the convergence in practice. 
Next, we compare the wall-clock time of our algorithms with GS-ADMM~\cite{li2019first} and  YALMIP (i.e., \texttt{IPOPT}) solver on real datasets. 
For sake of fairness,  we only extend the first-order algorithmic framework (referred to as GS-ADMM) to tackle the $\ell_\infty$-DRSVM problem. In fact, the faster inner solver conjugate gradient with an active set method can only tackle the $\ell_\infty$ case in \cite{li2019first}.  The implementation details to reproduce all numerical results in this section are given in the Appendix. Our code is available at \url{https://github.com/gerrili1996/Incremental_DRSVM}.

\subsection{Synthetic data: Different regularity conditions and their step size schemes}
Our setup for the synthetic experiments is as follows.  
First, we generate the learning parameter $w^*$ and feature vectors $\{{x}_i\}_{i=1}^n$ independently and identically (i.i.d) from the standard normal distribution $ \mathcal{N}(0, I_d)$ and the noisy measurements $\{\xi_i\}_{i=1}^n$ i.i.d from $ \mathcal{N}(0, \sigma^2 I_d)$ (e.g., $\sigma = 0.5$). Then, we compute the ground-truth labels $\{{y}_i\}_{i=1}^n$ by ${y}_i = \text{sign}(\langle w^*, x_i\rangle + \xi_i )$. 
Here, the model parameters are $n=1000,d=100,\kappa =1,\epsilon =0.1$. All the algorithmic parameters of ISG and IPPA have been fine-tuned via grid search for optimal performance.
Recall from Theorem~\ref{thm:conv} that for instances satisfying the sharpness condition, the smaller shrinking rate $\rho$ the algorithm can adopt, the faster its linear rate of convergence. The experiments results in Fig.~\ref{fig:syn}(a--c) indicate that IPPA allows us to choose a more aggressive $\rho$ when compared with ISG over all instances satisfying the sharpness condition. 
A similar phenomenon has also been observed in previous works; see, e.g., \cite[Fig.~1]{li2019incremental}. Even for instances that do not satisfy the sharpness condition, IPPA performs better than ISG; see Fig.~\ref{fig:syn}(d). 

\begin{figure*}
	\centering
	\subfigure[]{\includegraphics[width=0.325\textwidth]{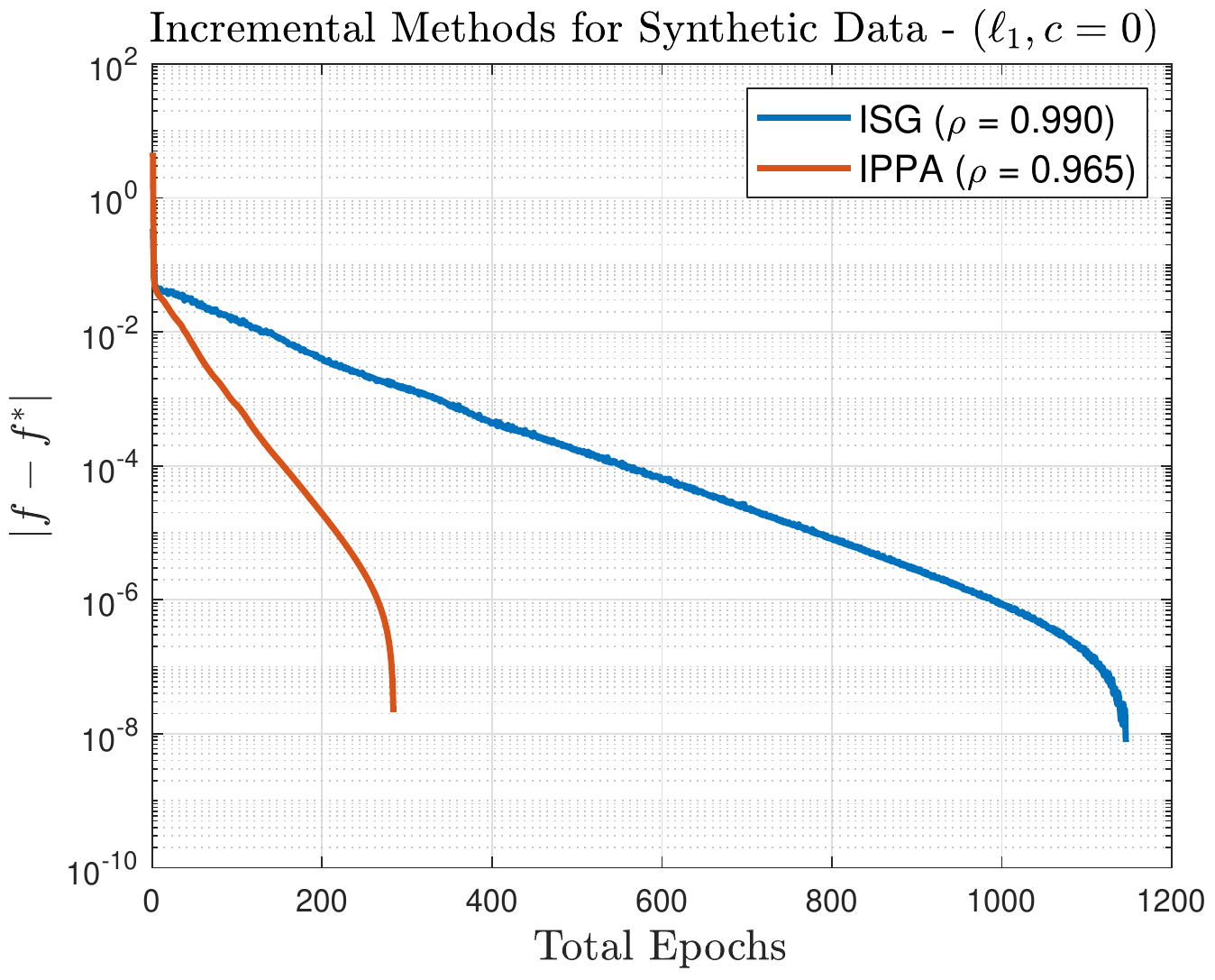}} 
	\subfigure[]{\includegraphics[width=0.325\textwidth]{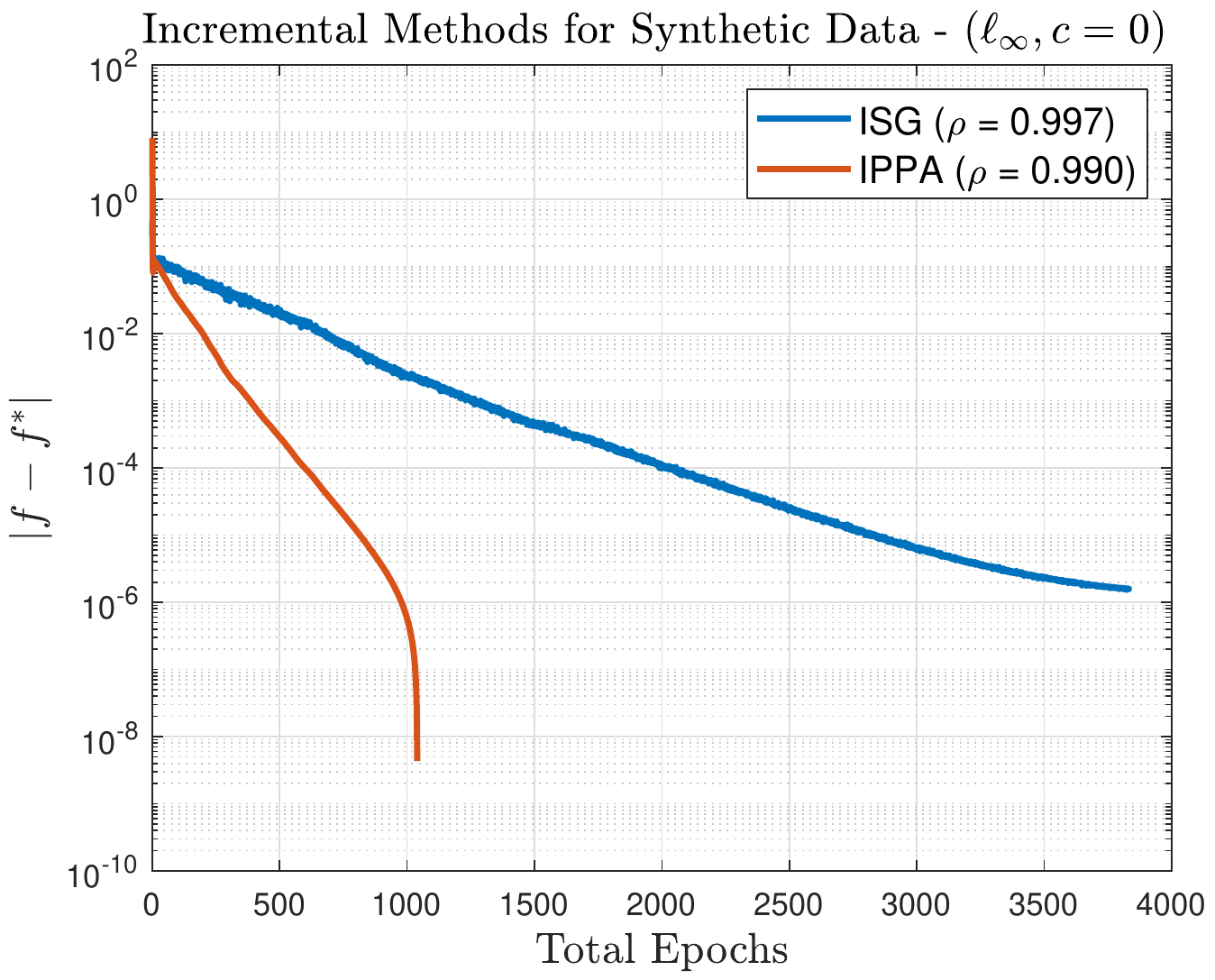}} 
	\subfigure[]{\includegraphics[width=0.325\textwidth]{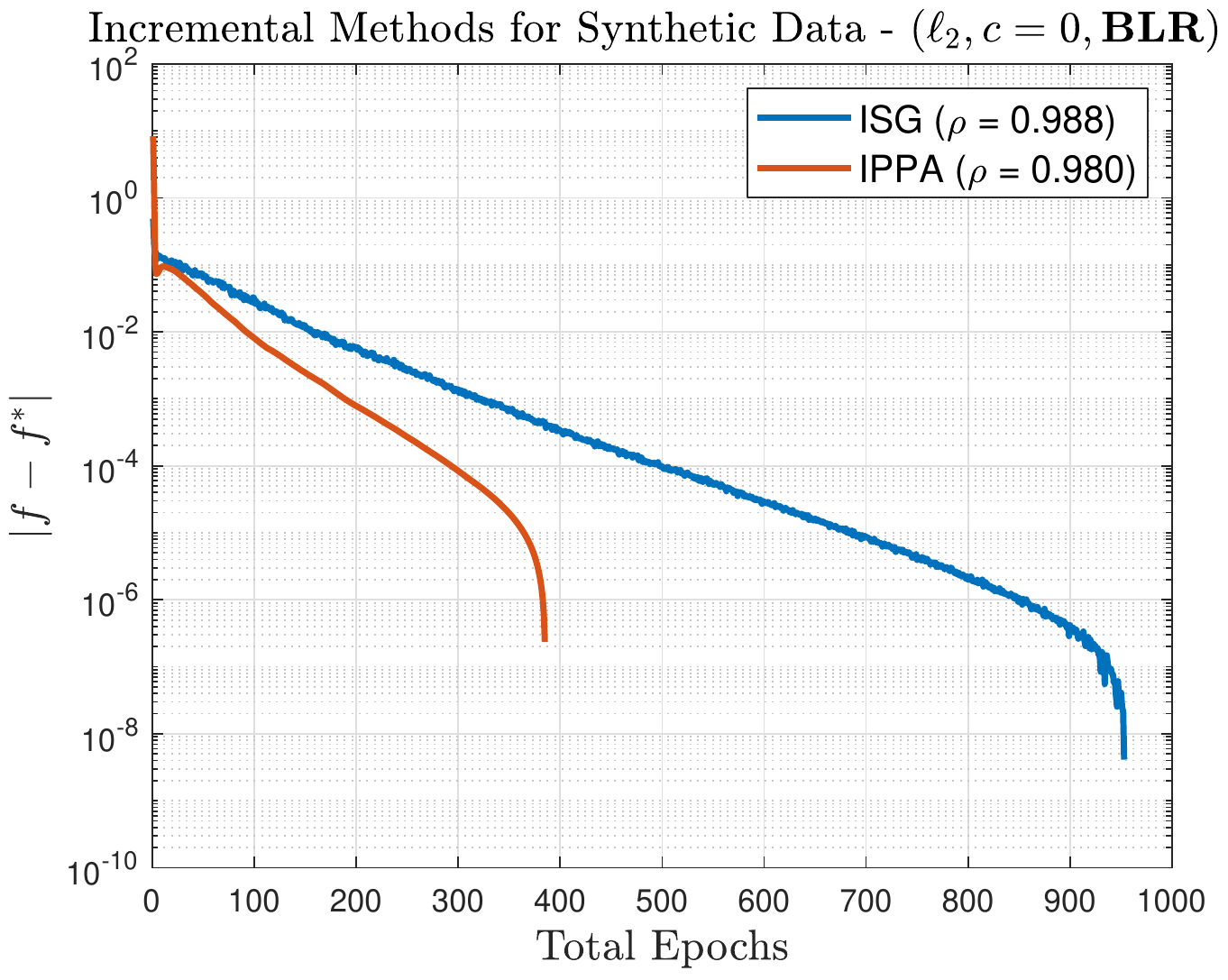}} \\
	\subfigure[]{\includegraphics[width=0.342\textwidth]{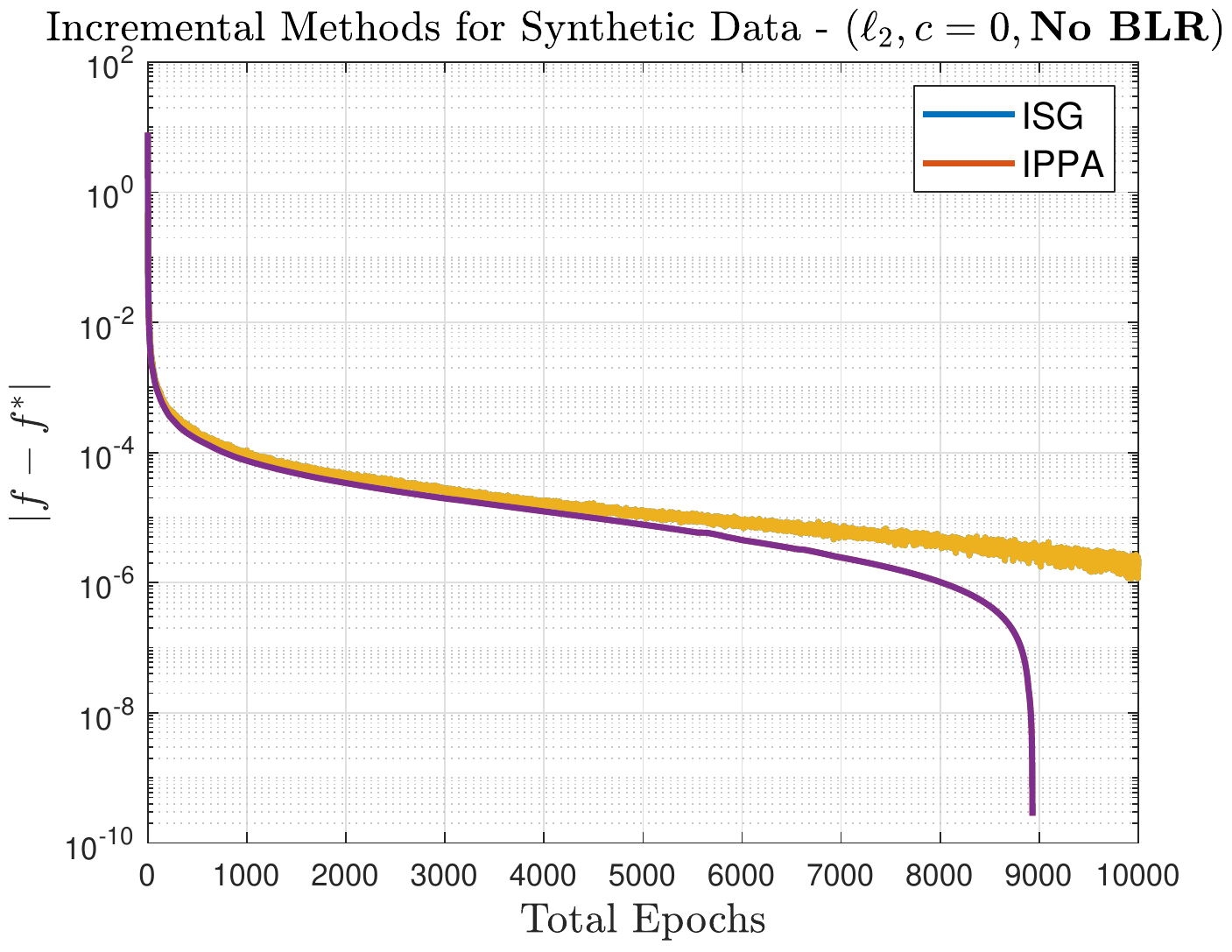}} 
	\subfigure[]{\includegraphics[width=0.322\textwidth]{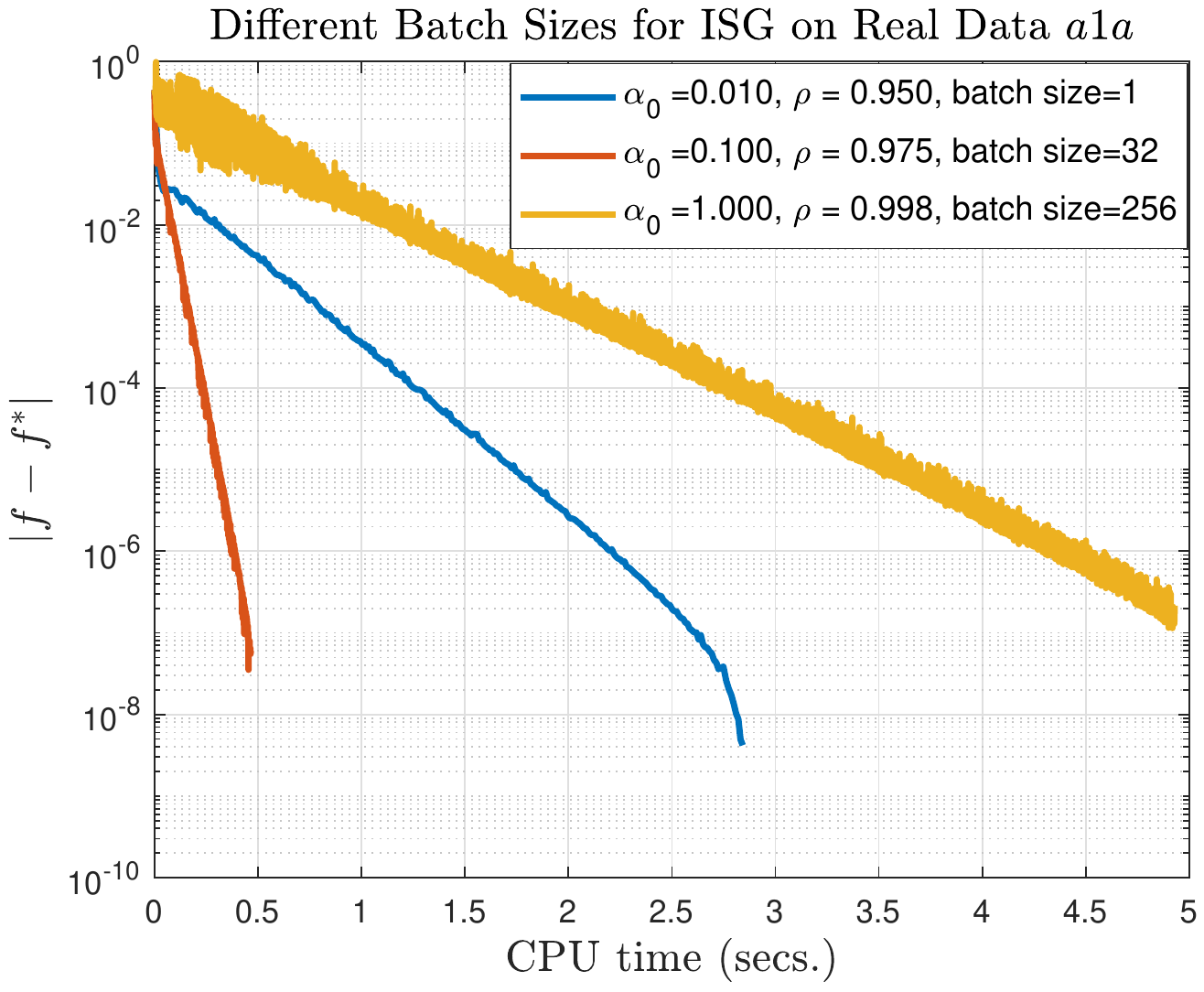}} 
	\subfigure[]{\includegraphics[width=0.322\textwidth]{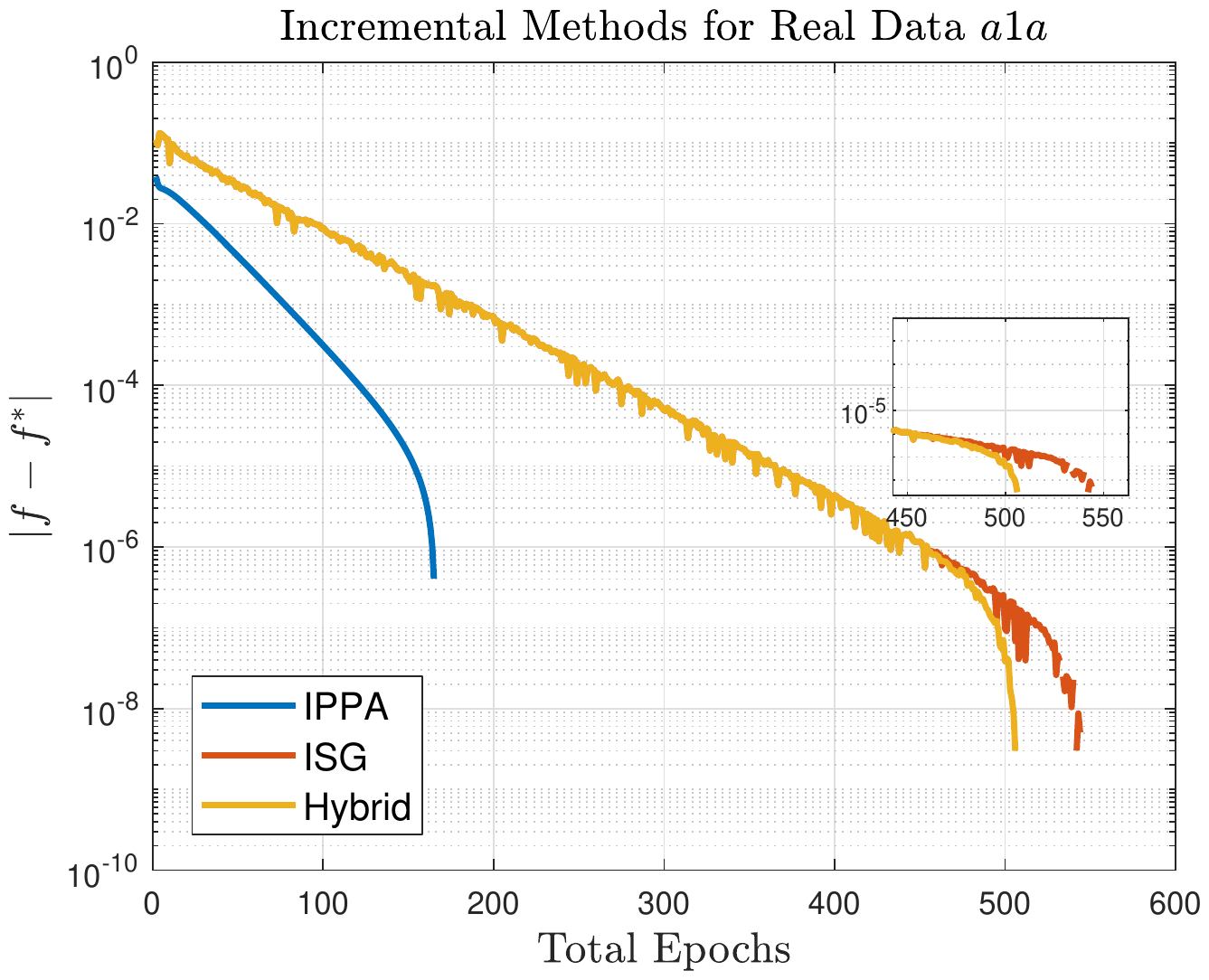}} 
	\caption{(a)--(d): Comparison between ISG and IPPA on both BLR and non-BLR instances generated from synthetic datasets. (e)--(f): Performance of ISG on different mini-batch sizes and performance of the hybrid algorithm on the \texttt{a1a} dataset.}
\label{fig:syn}
	\vspace{-0.2in}
\end{figure*}

Nevertheless, IPPA can only handle one sample at a time. Thus, we are motivated to develop an approach that can combine the best features of both ISG and IPPA. Towards that end, observe from Fig.\ref{fig:syn}(e) that 
there is a tradeoff between the mini-batch size and the shrinking rate $\rho$, which means that there is an optimal mini-batch size for achieving the fastest convergence speed.
Inspired by this, we propose to first apply the mini-batch ISG to obtain an initial point and then use IPPA in a local region around the optimal point to gain further speedup and get a more accurate solution. As shown in Fig.~\ref{fig:syn}(f), such a hybrid algorithm is effective, thus confirming our intuition.


\subsection{Efficiency of our incremental algorithms}
Next, we demonstrate the efficiency of our proposed methods on the real datasets \texttt{a1a-a9a,ijcnn1} downloaded from the LIBSVM\footnote{\url{https://www.csie.ntu.edu.tw/~cjlin/libsvmtools/datasets/binary.html}}. The results for $\ell_1$-DRSVM, which satisfies the sharpness condition, are shown in Table \ref{tab:1}. Apparently, IPPA is slower than mini-batch ISG (i.e., M-ISG) in general but can obtain more accurate solutions. More importantly, the hybrid algorithm, which combines the advantages of both M-ISG and IPPA, has an excellent performance and achieves a well-balanced tradeoff between accuracy and efficiency. All of them are much faster than YALMIP. The results for $\ell_2$-DRSVM are reported in Table \ref{tab:2}. As ISG is sensitive to hyper-parameters and has difficulty achieving the desired accuracy, we only present the results for IPPA. From the table, the superiority of IPPA over the solver is obvious.

\begin{table}[]
	\centering
	\caption{Wall-clock Time Comparison on UCI  Real Dataset: $\ell_1$-DRSVM, $c=0,\kappa=1,\epsilon=0.1$}
	\label{tab:1}
		\setlength{\tabcolsep}{4.5pt}
	\begin{tabular}{@{}ccccclcccc@{}}
		\toprule
		{\color{white}{\multirow{2}{*}{Dataset}}} & \multicolumn{4}{c}{Objective Value} &  & \multicolumn{4}{c}{Wall-clock time (sec)} \\ \cmidrule(lr){2-5} \cmidrule(l){7-10} 
		& M-ISG & IPPA & Hybrid & YALMIP &  & M-ISG & IPPA & Hybrid & YALMIP \\
		 \midrule
		a1a & \textbf{0.651090} & 0.651091 & \textbf{0.651090} & 0.651102 &  & \textbf{0.706} & 6.1242 & 1.560 & 12.221 \\
		a2a & \textbf{0.670640} & 0.670640 & \textbf{0.670640} & 0.670652 &  & \textbf{0.717} & 7.040 & 1.720 & 9.695 \\
		a3a & 0.662962 & 0.663093 & \textbf{0.662962} & 0.663060 &  & \textbf{1.800} & 21.242 & 3.740 & 11.854 \\
		a4a & 0.674274 & 0.674274 & \textbf{0.674273} & 0.674274 &  & \textbf{3.764} & 25.980 & 4.664 & 16.638 \\
		a5a & \textbf{0.660867} & \textbf{0.660867} & \textbf{0.660867} & 0.660869 &  & \textbf{2.026} & 24.752 & 24.752 & 24.207 \\
		a6a & \textbf{0.654189} & \textbf{0.654189} & \textbf{0.654189} & 0.654194 &  & \textbf{2.277} & 26.127 & 2.509 & 39.311 \\
		a7a & 0.656274 & 0.656274 & \textbf{0.656273} & 0.656411 &  & \textbf{2.528} & 33.094 & 2.799 & 60.046 \\
		a8a & 0.650036 & 0.650036 & \textbf{0.650035} & 0.650081 &  & \textbf{3.004} & 41.249 & 3.729 & 94.377 \\
		a9a & 0.642186 & 0.642186 & \textbf{0.642185} & 0.642596 &  & \textbf{2.285} & 35.554 & 3.063 & 155.980 \\ \bottomrule
	\end{tabular}
\end{table}

%

\begin{table}[]
	\centering
	\caption{Wall-clock Time Comparison on UCI Real Dataset: $\ell_2$-DRSVM, $c=0,\kappa=1,\epsilon=0.1$}
	\label{tab:2}
	\begin{tabular}{@{}ccccccc@{}}
		\toprule
		{\color{white}{\multirow{2}{*}{Dataset}}} & \multicolumn{2}{c}{Objective Value} &  & \multicolumn{2}{c}{Wall-clock time (sec)} & \multirow{2}{*}{Regularity Condition} \\ \cmidrule(lr){2-6}
		& IPPA & YALMIP &  & IPPA & YALMIP &  \\ \midrule
		a1a & 0.6339472 & \textbf{0.6338819} &  & \textbf{5.517} & 8.557 & Not Known \\
		a2a & 0.6599856 & \textbf{0.6599099} &  & \textbf{9.355} & 11.989 & Not Known \\
		a3a & 0.6443777 & \textbf{0.6442762} &  & \textbf{7.096} & 15.335 & Not Known \\
		a4a & 0.6513987 & \textbf{0.6513899} &  & \textbf{14.162} & 23.122 & Not Known \\
		a5a & 0.6484421 & \textbf{0.6484147} &  & \textbf{10.515} & 32.663 & Not Known \\
		a6a & 0.6428831 & \textbf{0.6428806} &  & \textbf{15.195} & 67.695 & Not Known \\
		a7a & \textbf{0.6459271} & 0.6462302 &  & \textbf{6.454} & 118.740 & Not Known \\
		a8a & \textbf{0.6441057} & \textbf{0.6441057} &  & \textbf{27.242} & 161.000 & Not Known \\
		a9a & \textbf{0.6389162} & 0.6437767 &  & \textbf{13.129} & 215.387 & Sharpness \\
		ijcnn & \textbf{0.4781876} & 0.4781897 &  & \textbf{20.567} & 379.943 & Sharpness \\ \bottomrule
	\end{tabular}
\end{table}

To further demonstrate the efficiency of our proposed hybrid algorithm, we compare it with GS-ADMM~\cite{li2019first} and YALMIP on $\ell_\infty$-DRSVM, which again satisfies the sharpness condition. The results are shown in Table \ref{tab:3}. The overall performance of our hybrid method dominates both GS-ADMM and YALMIP. Due to space limitation, we only present the results for the case $q\in\{1,2,\infty\}$, $c=0$. More numerical results can be found in the Appendix. 

\begin{table}[!htbp]
	\centering
	\setlength{\tabcolsep}{4.5pt}
	\caption{Wall-clock Time Comparison on UCI Real Dataset: $\ell_\infty$-DRSVM, $c=0,\kappa=1,\epsilon=0.1$}
	\label{tab:3}
	\begin{tabular}{@{}ccccccccc@{}}
		\toprule
		{\color{white}{{Dataset}}} & Hybrid & GS-ADMM & YALMIP &  & {\color{white}{{Dataset}}}  & Hybrid & GS-ADMM & YALMIP \\ \midrule
		a1a & \textbf{4.789} & 5.939 & 7.832 &  & a6a & \textbf{8.273} & 8.273 & 42.714 \\
		a2a & \textbf{5.098} & 7.069 & 9.100 &  & a7a & \textbf{6.115} & 6.115 & 60.743 \\
		a3a & 16.252 & \textbf{9.638} & 11.375 &  & a8a & \textbf{11.065} & 11.065 & 99.355 \\
		a4a & \textbf{5.498} & 10.446 & 17.542 &  & a9a & \textbf{5.717} & 5.717 & 172.07 \\
		a5a & \textbf{7.363} & 13.993 & 22.969 &  & ijcnn & \textbf{4.301} & 4.301 & 319.379 \\ \bottomrule
	\end{tabular}
\end{table}
\section{Conclusion and Future Work}
In this paper, we developed two new and highly efficient epigraphical projection-based incremental algorithms to solve the Wasserstein DRSVM problem with $\ell_p$ norm-induced transport cost ($p\in\{1,2,\infty\}$) and established their convergence rates.
A natural future direction is to develop a mini-batch version of IPPA and extend our algorithms to the asynchronous decentralized parallel setting.
Inspired by our paper, it would also be interesting to develop some new incremental/stochastic algorithms to tackle more general Wasserstein DRO problems; see, e.g., problem (11) in~\cite{esfahani2018data}.

\paragraph{Acknowledgment} Caihua Chen is supported in part by the National Natural Science Foundation of China (NSFC) projects 71732003, 11871269 and in part by the Natural Science Foundation of Jiangsu Province project BK20181259. Anthony Man-Cho So is supported in part by the CUHK Research Sustainability of Major RGC Funding Schemes project 3133236.

\paragraph{Broader Impact}This work does not present any foreseeable societal consequence. A broader impact discussion is not applicable. 

\bibliographystyle{plain}
\bibliography{ref}

\newpage
\section*{Appendix}
This supplementary document is the appendix section of the paper titled ``\textbf{Fast Epigraphical Projection-based Incremental Algorithms for Wasserstein Distributionally Robust Support Vector Machine}''. It is organized as follows. In Section A, we give the details of the algorithms for solving the subproblems (i.e., $\ell_q$ norm epigraphical projection and single-sample proximal point update \eqref{eq:IPPA}). In Section B, we prove the results in the section ``Convergence Rate Analysis of Incremental Algorithms''. In Section C, we describe how to extend the algorithm in~\cite{li2019first} (i.e., GS-ADMM) to tackle our $\ell_\infty$-DRSVM problem. Subsequently, we provide additional experimental results to further demonstrate the effectiveness of our proposed method.  

\subsection*{A: Algorithmic ingredients in ISG and IPPA}
To begin, we provide a summary in Table \ref{tab:sub}, which aims to help the reader find the related algorithmic details as quickly as possible. 
\begin{table}[H]
	\centering
	\caption{Summary of all ingredients in ISG and IPPA  }
	\label{tab:sub}
	\begin{tabular}{ccc}
		\toprule
		Cases & ISG epigraphical projection~\eqref{eq:ISG} & IPPA single sample update~\eqref{eq:IPPA}\\
		\midrule
		\multicolumn{1}{c}{\multirow{2}{*}{$\ell_2$}} & \multicolumn{1}{c}{\multirow{2}{*}{closed-form; see Prop. \ref{prop:epi2}}} & exhaust all seven cases; see Table \ref{tb:l2} \\
		\multicolumn{1}{c}{} & \multicolumn{1}{c}{} & analytic form for subcases; see Prop. \ref{prop:l2sub},\ref{prop:l2sub2} \\
		$\ell_1$ & quick-select algorithm in linear time & exhaust all five cases; see Alg. \ref{algo:l1sub} \\ 
		$\ell_\infty$ & Moreau's decomposition based on $\ell_1$ case & modified secant alg. \ref{algo:MSA} \\ \bottomrule
	\end{tabular}
\end{table}

\paragraph{Subproblems for $q=2$.} It is well known that the $\ell_2$ norm epigraphic projection has a closed-form formula, which is given as follows:
\begin{proposition}[Adopted from~{\cite[Theorem 3.3.6]{bauschke1996projection}}]  Let $L^d_2 = \{(x,s)\in \mathbb{R}^d \times \mathbb{R}: \|x\|_2 \leq s \}$. For any $(x,t)\in \mathbb{R}^d \times \mathbb{R}$, we have
	\begin{equation}
	\proj_{L^d_2}(x,s)=\left\{
	\begin{array}{ccl}
	\left( \frac{\|x\|_2+s}{2\|x\|_2},\frac{\|x\|_2+s}{2} \right) & & {\|x\|_2 \ge |s|}, \\
	(0,0)& & {s<\|x\|_2<-s}, \\
	(x,s) & & {\|x\|_2\leq s}.
	\end{array} \right.
	\end{equation}
	\label{prop:epi2}
\end{proposition}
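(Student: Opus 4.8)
The plan is to solve the optimization problem defining the projection,
\[
\proj_{L_2^d}(x,s) = \argmin_{(u,t)\in L_2^d} \tfrac12\|u-x\|_2^2 + \tfrac12(t-s)^2,
\]
by exploiting the conic structure of $L_2^d$. First I would record that $L_2^d$ is a closed convex cone and compute its polar cone $(L_2^d)^\circ = \{(y,r):\langle(y,r),(u,t)\rangle\le 0\ \forall(u,t)\in L_2^d\}$. Testing against the generators $(0,1)$ and $(u,\|u\|_2)$ of $L_2^d$ shows that $(y,r)\in(L_2^d)^\circ$ iff $\|y\|_2\le -r$; that is, $(L_2^d)^\circ = -L_2^d$, so the cone is self-dual. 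This immediately disposes of two of the three regions. In the region $\|x\|_2\le s$ (Case 3) the point is already feasible, so the projection equals $(x,s)$. In the region $\|x\|_2\le -s$ (which contains Case 2 together with the boundary $\|x\|_2=-s$ that is absorbed into Case 1) the point lies in the polar cone, and since the variational characterization $\langle(x,s)-(0,0),(u,t)-(0,0)\rangle = \langle(x,s),(u,t)\rangle\le 0$ holds for every $(u,t)\in L_2^d$ precisely when $(x,s)\in(L_2^d)^\circ$, the projection is the apex $(0,0)$.

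The remaining and only substantive region is Case 1, $\|x\|_2\ge|s|$, where the point lies neither in the cone nor in its polar; standard conic-projection facts then force the minimizer onto the boundary $\{\|u\|_2=t\}$. On this boundary, for a fixed $t\ge 0$ the feasible $u$ with $\|u\|_2=t$ closest to $x$ is $u = t\,x/\|x\|_2$, since it maximizes $u^Tx$ by Cauchy--Schwarz (assuming $x\neq 0$). Substituting reduces the problem to the univariate, strongly convex program $\min_{t\ge 0}\ \tfrac12(t-\|x\|_2)^2 + \tfrac12(t-s)^2$, whose unconstrained optimum is $t^* = \tfrac{\|x\|_2+s}{2}$. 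Because $\|x\|_2\ge|s|\ge -s$ throughout Case 1, we have $t^*\ge 0$, so the constraint $t\ge 0$ is inactive and $t^*$ is the true minimizer; substituting back yields $u^* = \tfrac{\|x\|_2+s}{2\|x\|_2}\,x$ and hence the stated formula.

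Finally I would verify consistency across regions. Strong convexity of the objective guarantees a unique minimizer in each case, so it remains only to check that the formulas agree on the overlaps: on $\|x\|_2=s\ge 0$ the Case-1 expression collapses to $(x,s)$, matching Case 3, and on $\|x\|_2=-s$ it collapses to $(0,0)$, matching Case 2. I expect the main obstacle to be the boundary case: rigorously justifying the reduction to one dimension, namely that the optimal $u^*$ is parallel to $x$ and that the constraint $\|u\|_2=t$ is active (rather than an interior point or the apex being selected), is where the real content lies. The degenerate subcase $x=0$ must also be noted, since then $\|x\|_2\ge|s|$ forces $s=0$, and the formula is recovered only in the limiting sense, consistent with $\proj_{L_2^d}(0,0)=(0,0)$.
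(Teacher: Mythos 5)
Your proof is correct. Note, however, that the paper itself offers no proof of this proposition: it is imported verbatim from Bauschke's thesis (Theorem 3.3.6), so there is no internal argument to compare against. Your derivation is the standard route to this formula and is self-contained: self-duality of the second-order cone, $(L_2^d)^\circ = -L_2^d$, settles the region $\|x\|_2 \le -s$ via the variational inequality $\langle (x,s),(u,t)\rangle \le 0$ for all $(u,t)\in L_2^d$, and the substantive region $\|x\|_2 \ge |s|$ is handled by the radial reduction $u = t\,x/\|x\|_2$ followed by the one-dimensional strongly convex minimization in $t$, with the sign check $t^* = \tfrac{\|x\|_2+s}{2}\ge 0$ ensuring the constraint $t\ge 0$ is inactive. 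The one place where you lean on unproved material is the claim that in Case 1 the minimizer must sit on the boundary of the cone and away from the apex; this is indeed standard (if $\proj_K(z)$ lay in the interior of $K$ then $z=\proj_K(z)\in K$, and $\proj_K(z)=0$ iff $z\in K^\circ$, both contradicting the Case-1 hypothesis), and you correctly flag it as the real content, so a one-line justification of this kind would close the argument completely. Two cosmetic points: you silently correct a typo in the paper's statement, whose first component in the Case-1 branch should read $\tfrac{\|x\|_2+s}{2\|x\|_2}\,x$ rather than the scalar $\tfrac{\|x\|_2+s}{2\|x\|_2}$ (and the statement's ``$(x,t)$'' should be ``$(x,s)$''), and your handling of the degenerate point $x=0$, $s=0$ and of the overlaps $\|x\|_2 = \pm s$, where the branch formulas agree, is exactly the consistency check one wants.
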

Recall that the  $\ell_2$ single-sample proximal point subproblem~\eqref{eq:l2} takes the form 
\begin{equation*}
	\min\limits_{w,\lambda}  \underbrace{\max \left\{1-w^Tz_i, 1+w^Tz_i-\lambda\kappa,0\right\}}_{h_i(w,\lambda)} + \frac{1}{2\alpha}(\|w-\bar{w}\|_2^2 + (\lambda-\bar{\lambda})^2),~\ \text{s.t.} \ \|w\|_2 \leq \lambda.
\end{equation*}
We start with $\Gamma =1, j=1$. Problem \eqref{eq:case1} can be written as
\begin{equation*}
\min\limits_{w,\lambda} \frac{1}{2\alpha}(\|w-\bar{w}-\alpha z_i\|_2^2 + (\lambda-\bar{\lambda})^2),~\ \text{s.t.} \ \|w\|_2 \leq \lambda,
\end{equation*}
whose optimal solution is given by $(\hat{w},\hat{\lambda}) = \proj_{L^d_2}(\bar{w}+\alpha z_i,\bar{\lambda})$. We further check whether $(\hat{w},\hat{\lambda})$ satisfies the optimality condition of problem~\eqref{eq:l2}; i.e., $\{(w,\lambda):h_{i,1}(w,\lambda)>h_{i,2}(w,\lambda), \, h_{i,1}(w,\lambda)>h_{i,3}(w,\lambda)\} = \{(w,\lambda):w^Tz<\min(\frac{\lambda\kappa}{2},1)\}$. The other two cases (i.e., $\Gamma=1, j=2,3$) follow the same procedure. Then, we proceed to consider $\Gamma=2$   (e.g., $(j,j') = (1,2)$). Problem \eqref{eq:case2} is equivalent to 
\begin{equation*}
\min_{w,\lambda} \frac{1}{2\alpha}(\|w-\bar{w}-\alpha z_i\|_2^2 + (\lambda-\bar{\lambda})^2),~\ \text{s.t.}~w^Tz_i = \frac{\kappa}{2}\lambda,~\|w\|_2 \leq \lambda.
\end{equation*} 
We now derive an analytic solution for its prototypical form~\eqref{eq:pp2sub_1} in Proposition \ref{prop:l2sub}, which also covers the other two cases  (i.e., $\Gamma=2, (j,j') = (1,3),(2,3)$).  
\begin{proposition} \label{prop:l2sub}
Given $\bar{w} \in \mathbb{R}^d$ and $\bar{\lambda},a,b \in \mathbb{R}$, consider the following optimization problem:
	\begin{equation}
	\begin{aligned}
 	&	\min_{w,\lambda}
	& & \frac{1}{2}\|w-\bar{w}\|_2^2 +\frac{1}{2}(\lambda -  \bar{\lambda})^2 \\
	&\,\,\, {\rm s.t.}
	& & w^Tz_i = a\lambda+b \leftarrow{\mu_1},\\
	& && \|w\|_2 \leq \lambda \leftarrow{\mu_2},
	\end{aligned} 
	\label{eq:pp2sub_1} 
	\end{equation}
where $\mu_1$ and $\mu_2$ are the associated dual multipliers.  Then, the optimal solution $(w^*,\lambda^*)$ to \eqref{eq:pp2sub_1} is 
\[
	\PPA(\bar{w},\bar{\lambda},a,b) \triangleq \left( \frac{\bar{w} -\mu_1^* z_i}{1+2\mu_2^*}, \frac{\bar{\lambda} + a\mu_1^*}{1-2\mu_2^*} \right),
\]
 where $\mu_1^*$ and $\mu_2^*$ are the optimal dual multipliers. In particular, we have
	\[\mu_1^* = \frac{(1-2\mu_2^*)\bar{w}^Tz_i-a(1+2\mu_2^*)\bar{\lambda} - b(1-2\mu_2^*)(1+2\mu_2^*)}{(1+2\mu_2^*)a^2 + (1-2\mu_2^*)\|z_i\|_2^2},\]
\[
 \mu_2^* = \left\{
	\begin{aligned}
	&0, \qquad\text{if}~\, \|\bar{w} - \mu_1^* z_i\|_2\leq \bar{\lambda} + a\mu_1^*,\\
	& \hat{\mu}_2, \quad\,\,\text{otherwise}
	\end{aligned}
	\right.
\]
and $\hat{\mu}_2$ is the root of the following \textbf{quartic equation} satisfying $\hat{\mu}_2 > 0$ and $\lambda ^* \ge 0$:
	\[p_1 \mu_2^4 + p_2 \mu_2^3 + p_3 \mu_2^2 + p_4 \mu_2 + p_5 =0.\]
	Here, $A =\|\bar{w}\|_2^2$, $B = \|z_i\|_2^2$, $C = \bar{w}^Tz_i$, and
	\begin{equation}\label{eq:quartic}
	\begin{aligned}
	p_1 = & a^2b^2- Bb^2, \\
	p_2 = &4a^2b^2, \\
	p_3 = &-4Bab\bar{\lambda}+2BCa\bar{\lambda}- 2Ca^3\bar{\lambda}- 4Ca^2b+2ABa^2+6a^2b^2 \\
	&+B^2\bar{\lambda}^2+2Bb^2+ BC^2- Ba^2\bar{\lambda}^2- C^2a^2- Aa^4 - AB^2, \\
	p_4 = & - 8Bab\bar{\lambda} + 4BCa\bar{\lambda} - 2Ba^2\bar{\lambda}^2 - 4Ca^3\bar{\lambda} - 8Ca^2b - 2Aa^4 \\
	& - 2BC^2 + 2AB^2 + 4a^2b^2 + 2B^2\bar{\lambda}^2 + 2C^2a^2, \\
	p_5 = & - 4Bab\bar{\lambda} + 2BCa\bar{\lambda} - 2Ca^3\bar{\lambda} - 4Ca^2b - 2ABa^2 + a^2b^2 \\
	& + B^2\bar{\lambda}^2 + 3C^2a^2 + BC^2 - Ba^2\bar{\lambda}^2 - Bb^2 - Aa^4 - AB^2.
	\end{aligned}
	\end{equation}
\end{proposition}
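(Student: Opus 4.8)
The plan is to characterize the unique optimal solution of the strictly convex program~\eqref{eq:pp2sub_1} through its KKT system, and then reduce the determination of the optimal multipliers to a one-dimensional root-finding problem. Since the objective is strongly convex and the feasible set—the intersection of an affine hyperplane with the second-order cone $L_2^d$—is closed and convex and (in the nondegenerate case $z_i\neq 0$) admits a Slater point, the minimizer exists, is unique, and is fully characterized by KKT. I would attach a multiplier $\mu_1\in\mathbb{R}$ to the equality $w^Tz_i=a\lambda+b$ and, writing $\|w\|_2\le\lambda$ in the equivalent squared form $\|w\|_2^2-\lambda^2\le 0$ (valid on $\{\lambda\ge 0\}$) with multiplier $\mu_2\ge 0$, read off the stationarity conditions $(1+2\mu_2)w=\bar{w}-\mu_1 z_i$ and $(1-2\mu_2)\lambda=\bar{\lambda}+a\mu_1$. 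These coincide with the stationarity conditions of the genuinely convex constraint $\|w\|_2-\lambda\le 0$ after the reparametrization $\mu_2=\nu/(2\lambda)$ of its multiplier $\nu$, so no convexity is lost. Solving for $w$ and $\lambda$ already produces the asserted closed form $\PPA(\bar{w},\bar{\lambda},a,b)$.

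Next I would pin down $\mu_1$ as a function of $\mu_2$. Substituting the two stationarity expressions into the equality constraint and clearing the denominators $(1+2\mu_2)$ and $(1-2\mu_2)$ leaves a single linear equation in $\mu_1$, whose solution is exactly the stated formula for $\mu_1^*$ (with $C=\bar{w}^Tz_i$ and $B=\|z_i\|_2^2$). The only remaining unknown is $\mu_2$, which I would determine from complementary slackness. If the cone constraint is inactive, then $\mu_2=0$ and one only verifies primal feasibility $\|\bar{w}-\mu_1 z_i\|_2\le\bar{\lambda}+a\mu_1$, which is precisely the first branch of the $\mu_2^*$ formula. Otherwise the constraint is active, i.e.\ $\|w\|_2=\lambda$, equivalently $\|\bar{w}-\mu_1 z_i\|_2^2(1-2\mu_2)^2=(\bar{\lambda}+a\mu_1)^2(1+2\mu_2)^2$.

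The crux is to show that this last equation, once $\mu_1$ is replaced by its rational expression $\mu_1=N(\mu_2)/D(\mu_2)$ (with $N$ quadratic and $D$ linear in $\mu_2$), collapses to a \emph{quartic} rather than the degree-six polynomial one naively obtains. The key algebraic observation I would isolate is that the combination $\bar{\lambda}D+aN$ carries a factor of $(1-2\mu_2)$; concretely $\bar{\lambda}D+aN=(1-2\mu_2)\bigl(\bar{\lambda}B+aC-ab(1+2\mu_2)\bigr)$, because the $a^2$-terms cancel. Writing $E$ for this linear factor, the right-hand side becomes $(\bar{\lambda}+a\mu_1)^2(1+2\mu_2)^2=(1-2\mu_2)^2(1+2\mu_2)^2E^2/D^2$, whose $(1-2\mu_2)^2$ cancels the corresponding factor on the left. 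After multiplying through by $D^2$ one is left with $AD^2-2CND+BN^2=(1+2\mu_2)^2E^2$, both sides of which are degree four in $\mu_2$ (here $A=\|\bar{w}\|_2^2$). Expanding and collecting powers then yields the quartic $p_1\mu_2^4+\cdots+p_5=0$ with the coefficients in~\eqref{eq:quartic}, up to a harmless overall scaling; as a quick check the leading coefficient equals $16b^2(B-a^2)$, matching $p_1=b^2(a^2-B)$ up to this factor. Among the real roots I would then select the one with $\hat{\mu}_2>0$ and $\lambda^*=(\bar{\lambda}+a\mu_1^*)/(1-2\mu_2^*)\ge 0$, noting that $\mu_2\ge 0$ together with primal feasibility forces $\mu_2^*\in[0,\tfrac12)$, so the cancelled factor $(1-2\mu_2)$ is nonzero.

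I expect the main obstacle to be twofold. First, correctly identifying and justifying the cancellation of $(1-2\mu_2)^2$ that drops the degree from six to four; without it the reduction stalls and no finite-parameter characterization emerges. Second, the bookkeeping of fully expanding $AD^2-2CND+BN^2-(1+2\mu_2)^2E^2$ and matching all five coefficients $p_1,\dots,p_5$, which is tedious but mechanical. A few degenerate configurations—$z_i=0$, $w^*=0$ (which forces $\lambda^*=0$), or the boundary case $\mu_2=\tfrac12$—would need to be dispatched separately, but these are routine once the generic case is established.
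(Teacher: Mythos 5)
Your proposal follows essentially the same route as the paper's own proof: form the KKT system of \eqref{eq:pp2sub_1} with the squared cone constraint (stationarity $(1+2\mu_2)w=\bar{w}-\mu_1 z_i$, $(1-2\mu_2)\lambda=\bar{\lambda}+a\mu_1$), eliminate $\mu_1$ as the rational function of $\mu_2$, split on complementary slackness ($\mu_2^*=0$ with a feasibility check versus the active case $\|w^*\|_2=\lambda^*$), and reduce the active case to a univariate quartic in $\mu_2$ --- and your factorization $\bar{\lambda}D+aN=(1-2\mu_2)\bigl(\bar{\lambda}B+aC-ab(1+2\mu_2)\bigr)$, which explains the collapse from degree six to degree four, is correct and is a step the paper's proof silently glosses over. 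One caveat on your final coefficient-matching claim: expanding $AD^2-2CND+BN^2-(1+2\mu_2)^2E^2$ yields coefficients equal to $(-16p_1,-8p_2,-4p_3,-2p_4,-p_5)$, i.e.\ your quartic is $-P(2\mu_2)$ where $P$ is the polynomial printed in \eqref{eq:quartic}, so the discrepancy is a variable rescaling $\mu_2\mapsto 2\mu_2$ rather than a ``harmless overall scaling'' (a numerical check with $z_i=(1,0)$, $\bar{w}=(2,1)$, $\bar{\lambda}=0.5$, $a=0.5$, $b=0.3$ confirms that your quartic vanishes at the true multiplier $\mu_2^*\approx 0.115$, which reproduces the optimizer $\lambda^*\approx 1.221$, while the printed quartic vanishes at $2\mu_2^*$) --- your derivation is the internally consistent one, and the printed coefficient list should be read as the quartic in the variable $2\mu_2$.
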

\begin{proof}
The Karush-Kuhn-Tucker (KKT) conditions of \eqref{eq:pp2sub_1} are given by 
\begin{equation}
\label{eq:KKT_sub}
\left\{ 
\begin{aligned}
& w^* -\bar{w} + \mu_1^* z_i + 2\mu_2^* w^*  = 0,\\
& \lambda^* -\bar{\lambda} - a\mu_1^* - 2 \mu_2^*\lambda^* = 0,\\
& {w^*} ^Tz_i = a\lambda^* + b,\\
& \|w^* \|_2 \leq \lambda^*,\\
& \mu_2^*(\|w^* \|_2^2-{\lambda^*}^2) = 0, \\
& \mu_2^* \ge 0 . 
\end{aligned}
\right.
\end{equation}
Based on \eqref{eq:KKT_sub}, we have 	
\begin{align*}
& (1+2\mu_2^*){w^*}^Tz_i-\bar{w}^Tz_i + \mu_1^* \|z_i\|_2^2  = 0 \quad\Rightarrow\quad {w^*} ^Tz_i = \frac{\bar{w}^Tz_i -\mu_1^* \|z_i\|_2^2}{1+2\mu_2^*}, \\
& (1-2\mu_2^*)\lambda^* -\bar{\lambda} - a\mu_1^*  = 0  \quad\Rightarrow\quad (1-2\mu_2^*)(a\lambda^*+b) = a\bar{\lambda}+a^2\mu_1^*+b(1-2\mu_2^*).
\end{align*}
Plugging in ${w^* }^Tz_i = a\lambda + b$ yields
\begin{equation*}
\begin{aligned}
& (1-2\mu_2^*)\frac{\bar{w}^Tz_i -\mu_1^* \|z_i\|_2^2}{1+2\mu_2^*} = a\bar{\lambda}+a^2\mu_1^*+b(1-2\mu_2^*) \\
\Rightarrow\quad & (1-2\mu_2^*)(\bar{w}^Tz_i -\mu_1^* \|z_i\|_2^2) = a\bar{\lambda}(1+2\mu_2^*)+a^2\mu_1^*(1+2\mu_2^*)) + b(1-2\mu_2^*)(1+2\mu_2^*).
\end{aligned}
\end{equation*} 
Then, 
\begin{equation}
	\mu_1^* = \frac{(1-2\mu_2^*)\bar{w}^Tz_i-a(1+2\mu_2^*)\bar{\lambda} - b(1-2\mu_2^*)(1+2\mu_2^*)}{(1+2\mu_2^*)a^2 + (1-2\mu_2^*)\|z_i\|_2^2}.
	\label{eq:u1u2}
\end{equation}
To handle the complementary slackness condition $\mu_2^*(\|w^*\|_2^2-{\lambda^*}^2)=0$, we consider the following two cases:
	\begin{itemize}
		\item Case 1: If $\mu_2^*  = 0 $, then we have 
		$ \mu_1^* = \frac{\bar{w}^Tz_i-a\bar{\lambda} - b}{a^2 + \|z_i\|_2^2}$ and hence 
		\[ w^*  = \bar{w} - \mu_1^* z_i, \quad \lambda^* = \bar{\lambda} + a\mu_1^*.\]
		If $\|w^*\|_2 \leq \lambda^*$ does not hold, we go to Case 2.
		\item Case 2: If  $\mu_2^* > 0 $, then by incorporating $\|w^*\|_2 = \lambda^*$ into \eqref{eq:u1u2}, we obtain the quartic equation
		\[p_1 \mu_2^4 + p_2 \mu_2^3 + p_3 \mu_2^2 + p_4 \mu_2 + p_5 =0, \]
		whose coefficients $p_1,\ldots,p_5$ are given in \eqref{eq:quartic}. Finally, $\mu_2^*$ is in effect the root of this quartic equation, which satisfies $\mu_2^* > 0$ and $\lambda ^* \ge 0$. The optimal solution $(w^*,\lambda^*)$ is then given by
		\[w^* = \frac{\bar{w}-\mu_1^*z_i}{1+2\mu_2^*}, \quad \lambda^* =\frac{\bar{\lambda} + a\mu_1^*}{1-2\mu_2^*}. \]
	\end{itemize}
\end{proof}
\begin{Remark}
	The KKT conditions are necessary and sufficient for optimality for problem~\eqref{eq:pp2sub_1}.  If there does not exist a KKT point $(w^*,\lambda^*, \mu_1^*, \mu_2^*)$ (i.e., no nonnegative roots for the quartic function), then the case $\Gamma=2$ is not optimal for \eqref{eq:l2} and we proceed to other cases. For practical implementation, we apply the built-in function \texttt{roots([p1,p2,p3,p4,p5])} in MATLAB to get the roots of the quartic equation. 
\end{Remark}
Similarly, we check the corresponding optimality condition
$\{(w,\lambda):h_{i,1}(w,\lambda) = h_{i,2}(w,\lambda) > h_{i,3}(w,\lambda)\} = \{(w,\lambda): \lambda\kappa-1<w^Tz_i<1, \lambda\kappa<2\}$. The other two cases follow the same procedure. Lastly, we proceed to the case $\Gamma=3$ and problem \eqref{eq:case2} in effect admits a closed-form update.
\begin{proposition}	\label{prop:l2sub2}
	Given $\bar{w} \in \mathbb{R}^d$ and $b,\lambda\in\mathbb{R}$, consider the following optimization problem:
\begin{equation}
	\begin{aligned}
	&	\min_{w}
	& & \frac{1}{2}\|w-\bar{w}\|_2^2 \\
	&\,\,\, {\rm s.t.}
	& & w^Tz_i = b \leftarrow{\alpha},\\
	& && \|w\|_2 \leq \lambda \leftarrow{\beta},
	\end{aligned} 
	\label{eq:pp2sub_2} 
\end{equation}
where $\alpha$ and $\beta$ are the associated dual multipliers.  Then, the optimal solution $w^*$ to \eqref{eq:pp2sub_2} is 
	\[
	\mathcal{O}_{BH}(b,\lambda,\bar{w}) \triangleq \left\{
	\begin{array}{c@{\quad}l}
	A, & \text{if}\ \|A\|_2 \leq \lambda, \\
	\displaystyle \frac{1}{2\beta^*+1}\left\{ A+\frac{2b\beta^*}{\|z_i\|_2^2}z_i\right\}, & \text{otherwise},
	\end{array}
	\right.
	\]
	where $A =\bar{w}-\frac{\bar{w}^Tz_i-b}{\|z_i\|_2^2}z_i$, $B = \frac{2b}{\|z_i\|_2^2}z_i$, and $\beta^*$ is the positive root of the following \textbf{quadratic} equation:
	\[(4\lambda^2-\|B\|_2^2)\beta^2+(4\lambda^2-2A^TB)\beta+(\lambda^2-\|A\|_2^2)=0.\]
\end{proposition}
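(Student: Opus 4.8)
The plan is to treat \eqref{eq:pp2sub_2} as a strictly convex quadratic program over the convex feasible region $\{w : w^Tz_i = b\} \cap \{w : \|w\|_2 \le \lambda\}$. Assuming this region is nonempty (equivalently $\lambda \ge |b|/\|z_i\|_2$), the objective is strictly convex and coercive, so the minimizer is unique, and by convexity the KKT system is both necessary and sufficient for optimality, exactly as in the Remark following Proposition~\ref{prop:l2sub}. First I would write the Lagrangian with multiplier $\alpha \in \mathbb{R}$ for the equality $w^Tz_i = b$ and $\beta \ge 0$ for $\|w\|_2^2 \le \lambda^2$, and record the stationarity condition $(1 + 2\beta^*)w^* = \bar{w} - \alpha^* z_i$ together with complementary slackness $\beta^*(\|w^*\|_2^2 - \lambda^2) = 0$.

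Next I would eliminate $\alpha^*$. Dividing stationarity by $1 + 2\beta^* > 0$ and imposing $(w^*)^Tz_i = b$ yields $\alpha^* = (\bar{w}^Tz_i - b(1 + 2\beta^*))/\|z_i\|_2^2$; substituting back and regrouping in terms of $A = \bar{w} - \tfrac{\bar{w}^Tz_i - b}{\|z_i\|_2^2}z_i$ and $B = \tfrac{2b}{\|z_i\|_2^2}z_i$ gives the compact form $w^*(\beta^*) = \tfrac{1}{1 + 2\beta^*}(A + \beta^* B)$. This is the algebraic core and is essentially a bookkeeping calculation. I would then split on complementary slackness: in the case $\beta^* = 0$ we get $w^* = A$, which is admissible precisely when $\|A\|_2 \le \lambda$, yielding the first branch of $\mathcal{O}_{BH}$ (geometrically, $A$ is the projection of $\bar{w}$ onto the hyperplane $\{w^Tz_i = b\}$, so if it already lies in the ball it solves the full problem). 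In the case $\beta^* > 0$, complementary slackness forces $\|w^*\|_2 = \lambda$; plugging $w^*(\beta^*)$ into $\|w^*\|_2^2 = \lambda^2$, expanding, and clearing the denominator $(1 + 2\beta^*)^2$ produces exactly the quadratic $(4\lambda^2 - \|B\|_2^2)\beta^2 + (4\lambda^2 - 2A^TB)\beta + (\lambda^2 - \|A\|_2^2) = 0$.

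The step I expect to be the main obstacle is justifying that the relevant multiplier is the \emph{unique positive} root of this quadratic and that such a root exists in the regime $\|A\|_2 > \lambda$. My plan is to rewrite the stationary path as $w^*(\beta) = \tfrac{B}{2} + \tfrac{1}{1 + 2\beta}(A - \tfrac{B}{2})$ and observe that $\tfrac{B}{2} = \tfrac{b}{\|z_i\|_2^2}z_i$ is parallel to $z_i$ while $A - \tfrac{B}{2}$ lies in the hyperplane and is therefore orthogonal to $z_i$. This gives the Pythagorean identity $\|w^*(\beta)\|_2^2 = \|\tfrac{B}{2}\|_2^2 + \tfrac{1}{(1+2\beta)^2}\|A - \tfrac{B}{2}\|_2^2$, which is strictly decreasing from $\|A\|_2^2$ at $\beta = 0$ to $\|\tfrac{B}{2}\|_2^2$ as $\beta \to \infty$. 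Since feasibility gives $\|\tfrac{B}{2}\|_2 = |b|/\|z_i\|_2 \le \lambda$, the intermediate value theorem produces a unique $\beta^* > 0$ with $\|w^*(\beta^*)\|_2 = \lambda$, which is the positive root selected in the statement; KKT sufficiency then certifies optimality of the corresponding $w^*$. An alternative to the monotonicity argument is a direct sign analysis of the quadratic: a short computation shows $2A^TB = \|B\|_2^2$, so the equation collapses to $(4\lambda^2 - \|B\|_2^2)(\beta^2 + \beta) = \|A\|_2^2 - \lambda^2$, where the left factor is nonnegative by feasibility and the right side is positive in this case, making the existence and uniqueness of $\beta^* > 0$ transparent.
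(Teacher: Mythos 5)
Your proposal is correct and follows essentially the same route as the paper's proof: write the KKT system, eliminate the equality multiplier $\alpha^*$ via $(w^*)^Tz_i=b$ to get $w^*(\beta)=\frac{1}{1+2\beta}\left(A+\beta B\right)$, then split on complementary slackness, with $\beta^*=0$ giving the branch $\|A\|_2\le\lambda$ and $\beta^*>0$ giving the stated quadratic. Where you go beyond the paper is a genuine improvement rather than a deviation: the paper simply asserts that $\beta^*$ is ``the positive root'' of the quadratic, whereas your decomposition $w^*(\beta)=\tfrac{B}{2}+\tfrac{1}{1+2\beta}\bigl(A-\tfrac{B}{2}\bigr)$ (valid since $A^Tz_i=b$ makes $A-\tfrac{B}{2}$ orthogonal to $z_i$), the resulting Pythagorean monotonicity of $\beta\mapsto\|w^*(\beta)\|_2^2$, and the identity $2A^TB=\|B\|_2^2$ collapsing the quadratic to $(4\lambda^2-\|B\|_2^2)(\beta^2+\beta)=\|A\|_2^2-\lambda^2$ actually establish existence and uniqueness of the positive root when $\|A\|_2>\lambda$ and $\|\tfrac{B}{2}\|_2<\lambda$ --- the only caveat being the degenerate case $\|\tfrac{B}{2}\|_2=\lambda$ (feasible set a singleton, Slater fails, no such $\beta^*$ exists), which your ``nonnegative left factor'' remark glosses over but which the paper's proof silently ignores as well.
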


\begin{proof}	
The KKT conditions of \eqref{eq:pp2sub_2} are given by
	\begin{equation}
	\left\{
	\begin{aligned}
	&w^*-\bar{w}+\alpha^* z_i +2\beta^* w^* =0,\\
	&{w^*}^Tz_i-b=0,\\
	&\|w^*\|_2 \leq \lambda,\\
	&\beta^*(\|w^*\|_2^2 -\lambda^2)=0,\\
	& \beta^* \ge 0.
	\end{aligned}
	\right.
	\label{eq:KKT_sub2}
	\end{equation}
Here, $\alpha^*$ and $\beta^*$ are the optimal dual multipliers. On top of \eqref{eq:KKT_sub2}, we have 
\[(1+2\beta^*)w^*-\bar{w}+\alpha^* z_i  =0 \quad\Rightarrow\quad (1+2\beta^*){w^*}^Tz_i-\bar{w}^Tz_i+\alpha^* \|z_i\|_2^2  =0,\]
\[(1+2\beta^*)b-\bar{w}^Tz_i+\alpha^* \|z_i\|_2^2  =0 \quad\Rightarrow\quad \alpha^* = \frac{\bar{w}^Tz_i - (1+2\beta^*)b}{\|z_i\|_2^2 }.\]
Plugging in $w^* = \frac{\bar{w}-\alpha^* z_i}{1+2\beta^*}$ gives
\[ w^* = \frac{1}{2\beta^*+1}\left\{ \bar{w}-\frac{\bar{w}^Tz-(1+2\beta^*)b}{\|z_i\|_2^2}z_i\right\}.
	\] 
Similarly, to handle the complementary slackness condition, we consider the case $\beta^* =0$. For this case, we check whether the condition $\|A\|_2 \leq \lambda$ holds. Otherwise, $\beta^*>0$ and $\|w^*\|_2^2 = \lambda^2$. This is equivalent to finding the positive root of the quadratic function
	\[(4\lambda^2-\|B\|_2^2){\beta^*}^2+(4\lambda^2-2A^TB)\beta^*+(\lambda^2-\|A\|_2^2)=0.\]
	
The geometric interpretation of this case is illustrated in Fig. \ref{fig:geo}. Problem \eqref{eq:pp2sub_2} seeks to find the projection onto the intersection of the Euclidean ball $\|w\|_2\leq \lambda $ and the hyperplane $w^Tz_i =b$. Observe that the projection of $\bar{w}$ onto the hyperplane $w^Tz_i = b$ is given by $A =\bar{w}-\frac{\bar{w}^Tz_i-b}{\|z_i\|_2^2}z_i$. If $A\in\{w:\|w\|_2\leq \lambda\}$ (i.e., red case), then $A$ is the optimal solution to \eqref{eq:pp2sub_2}. Otherwise, we aim to find the point on the sphere (i.e.,$\|w^*\|_2 =\lambda$) that is closer to $A$ (i.e., blue case). 
\begin{figure}[H]
	\centering
	\tikzset{every picture/.style={line width=0.75pt}} 
	
	\begin{tikzpicture}[x=0.75pt,y=0.75pt,yscale=-1,xscale=1]
	
	\draw    (126.5,168) -- (417.5,168) ;
	\draw   (219,196.6) .. controls (219,161.59) and (247.39,133.2) .. (282.4,133.2) .. controls (317.41,133.2) and (345.8,161.59) .. (345.8,196.6) .. controls (345.8,231.61) and (317.41,260) .. (282.4,260) .. controls (247.39,260) and (219,231.61) .. (219,196.6) -- cycle ;
	\draw  [color={rgb, 255:red, 74; green, 144; blue, 226 }  ,draw opacity=1 ][fill={rgb, 255:red, 74; green, 144; blue, 226 }  ,fill opacity=1 ] (336.8,168.1) .. controls (336.8,166.94) and (337.74,166) .. (338.9,166) .. controls (340.06,166) and (341,166.94) .. (341,168.1) .. controls (341,169.26) and (340.06,170.2) .. (338.9,170.2) .. controls (337.74,170.2) and (336.8,169.26) .. (336.8,168.1) -- cycle ;
	\draw  [color={rgb, 255:red, 74; green, 144; blue, 226 }  ,draw opacity=1 ][fill={rgb, 255:red, 74; green, 144; blue, 226 }  ,fill opacity=1 ] (223.8,168.1) .. controls (223.8,166.94) and (224.74,166) .. (225.9,166) .. controls (227.06,166) and (228,166.94) .. (228,168.1) .. controls (228,169.26) and (227.06,170.2) .. (225.9,170.2) .. controls (224.74,170.2) and (223.8,169.26) .. (223.8,168.1) -- cycle ;
	\draw  [color={rgb, 255:red, 0; green, 0; blue, 0 }  ,draw opacity=1 ][fill={rgb, 255:red, 0; green, 0; blue, 0 }  ,fill opacity=1 ] (283.5,195.55) .. controls (283.5,194.97) and (283.97,194.5) .. (284.55,194.5) .. controls (285.13,194.5) and (285.6,194.97) .. (285.6,195.55) .. controls (285.6,196.13) and (285.13,196.6) .. (284.55,196.6) .. controls (283.97,196.6) and (283.5,196.13) .. (283.5,195.55) -- cycle ;
	\draw    (284.55,195.5) -- (326.34,237.82) ;
	\draw [shift={(328.45,239.95)}, rotate = 225.36] [fill={rgb, 255:red, 0; green, 0; blue, 0 }  ][line width=0.08]  [draw opacity=0] (8.93,-4.29) -- (0,0) -- (8.93,4.29) -- cycle    ;
	\draw [color={rgb, 255:red, 208; green, 2; blue, 27 }  ,draw opacity=1 ] [dash pattern={on 4.5pt off 4.5pt}]  (297.5,78) -- (297.74,166) ;
	\draw [shift={(297.75,168)}, rotate = 269.84000000000003] [color={rgb, 255:red, 208; green, 2; blue, 27 }  ,draw opacity=1 ][line width=0.75]    (10.93,-3.29) .. controls (6.95,-1.4) and (3.31,-0.3) .. (0,0) .. controls (3.31,0.3) and (6.95,1.4) .. (10.93,3.29)   ;
	\draw  [color={rgb, 255:red, 208; green, 2; blue, 27 }  ,draw opacity=1 ][fill={rgb, 255:red, 208; green, 2; blue, 27 }  ,fill opacity=1 ] (295.65,168) .. controls (295.65,166.84) and (296.59,165.9) .. (297.75,165.9) .. controls (298.91,165.9) and (299.85,166.84) .. (299.85,168) .. controls (299.85,169.16) and (298.91,170.1) .. (297.75,170.1) .. controls (296.59,170.1) and (295.65,169.16) .. (295.65,168) -- cycle ;
	\draw [color={rgb, 255:red, 74; green, 144; blue, 226 }  ,draw opacity=1 ] [dash pattern={on 4.5pt off 4.5pt}]  (176.5,77) -- (176.74,165) ;
	\draw [shift={(176.75,167)}, rotate = 269.84000000000003] [color={rgb, 255:red, 74; green, 144; blue, 226 }  ,draw opacity=1 ][line width=0.75]    (10.93,-3.29) .. controls (6.95,-1.4) and (3.31,-0.3) .. (0,0) .. controls (3.31,0.3) and (6.95,1.4) .. (10.93,3.29)   ;
	\draw  [color={rgb, 255:red, 74; green, 144; blue, 226 }  ,draw opacity=1 ][fill={rgb, 255:red, 74; green, 144; blue, 226 }  ,fill opacity=1 ] (174.65,167) .. controls (174.65,165.84) and (175.59,164.9) .. (176.75,164.9) .. controls (177.91,164.9) and (178.85,165.84) .. (178.85,167) .. controls (178.85,168.16) and (177.91,169.1) .. (176.75,169.1) .. controls (175.59,169.1) and (174.65,168.16) .. (174.65,167) -- cycle ;
	
	\draw (378,174) node [anchor=north west][inner sep=0.75pt]  [font=\footnotesize]  {$w^{T} z_{i} \ =\ b\ $};
	\draw (308,197) node [anchor=north west][inner sep=0.75pt]  [font=\footnotesize]  {$\lambda $};
	\draw (269,184) node [anchor=north west][inner sep=0.75pt]  [font=\footnotesize]  {$o$};
	\draw (290,56) node [anchor=north west][inner sep=0.75pt]  [font=\footnotesize,color={rgb, 255:red, 208; green, 2; blue, 27 }  ,opacity=1 ]  {$\overline{w}$};
	\draw (291,176) node [anchor=north west][inner sep=0.75pt]  [font=\footnotesize,color={rgb, 255:red, 208; green, 2; blue, 27 }  ,opacity=1 ]  {$A$};
	\draw (168,55) node [anchor=north west][inner sep=0.75pt]  [font=\footnotesize,color={rgb, 255:red, 74; green, 144; blue, 226 }  ,opacity=1 ]  {$\overline{w}$};
	\draw (134,173) node [anchor=north west][inner sep=0.75pt]  [font=\scriptsize,color={rgb, 255:red, 74; green, 144; blue, 226 }  ,opacity=1 ]  {$A\ =\overline{w} -\frac{\overline{w}^{T} z_{i} \ -b}{\| z_{i} \| ^{2}_{2}}z_i$};
	\draw (200,153) node [anchor=north west][inner sep=0.75pt]  [font=\tiny,color={rgb, 255:red, 74; green, 144; blue, 226 }  ,opacity=1 ]  {$\beta  >0$};
	\draw (340,153) node [anchor=north west][inner sep=0.75pt]  [font=\tiny,color={rgb, 255:red, 74; green, 144; blue, 226 }  ,opacity=1 ]  {$\beta < 0$};
	\end{tikzpicture}
	\caption{Projection onto the intersection of Euclidean ball and hyperplane.}	
	\label{fig:geo}
\end{figure}
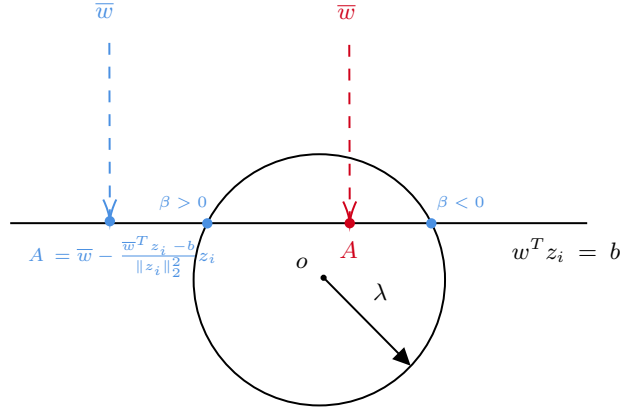	
\end{proof}
Let us now summarize the optimal solutions of all sub-cases and the corresponding optimality conditions in Table \ref{tb:l2}.
\begin{table}[h]	\label{tb:l2}
	\centering
	\caption{Summary of all sub-cases for $\ell_2$ proximal point update \eqref{eq:l2}} 
	\begin{tabular}{c|c|c}
		\toprule 
		Sub-cases & Optimal solution & Optimality condition\\
		\midrule  
		$\Gamma=1,j=1$ &	$ \proj_{L^d_2}(\bar{{w}}+\alpha z_i,\bar{\lambda})$ & ${w^*}^Tz < \min(\frac{\lambda^*\kappa}{2},1)$\\
		$\Gamma=1,j=2$&$ \proj_{L^d_2}(\bar{w}-\alpha z_i,\bar{\lambda}+\alpha \kappa)$  & ${w^*}^Tz>\max(\frac{\lambda^*\kappa}{2},\lambda^*\kappa-1)$\\
		$\Gamma=1,j=3$ & $ \proj_{L^d_2}(\bar{w},\bar{\lambda})$ & $1<{w^*}^Tz<\lambda^*\kappa-1$ ~\&~ $\lambda^*\kappa>2$ \\
		$\Gamma=2,(j,j')=(1,2)$ & $ \PPA(\bar{w}+\alpha z_i,\bar{\lambda},\frac{\kappa}{2},0)$ & $\lambda^*\kappa-1<{w^*}^Tz<1$ ~\&~ $\lambda^*\kappa<2$ \\
			$\Gamma=2,(j,j')=(1,3)$ & $ \PPA(\bar{w},\bar{\lambda},0,1)$ &${w^*}^Tz<\min(\frac{\lambda^*\kappa}{2},\lambda^*\kappa-1)$ ~\&~ $\lambda^*\kappa>2$ \\
			$\Gamma=2,(j,j')=(2,3)$ & $\PPA(\bar{w},\bar{\lambda},\kappa,-1)$ & ${w^*}^Tz>\max(\frac{\lambda^*\kappa}{2},1)$~\&~ $\lambda^*\kappa>2$ \\
			$\Gamma=3$ & $( \mathcal{O}_{BH}(1,\frac{2}{\kappa},\bar{w}),\frac{2}{\kappa})$ & $\lambda^*=\frac{2}{\kappa}$\\
		\bottomrule 
	\end{tabular}
\end{table}
\paragraph{Subproblems for $q=1$.} Consider the $\ell_1$ norm epigraphical projection 
\begin{equation}
\proj_{L^d_1}(x,s) = \mathop{\arg\min}_{y,t} \left\{ \frac{1}{2}\|y-x\|_2^2 + \frac{1}{2}(t-s)^2, ~\text{s.t.}~ \|y\|_1 \leq t \right\},
\label{eq:epil1} 
\end{equation}
where $L^d_1 = \{(x,s)\in \mathbb{R}^d \times \mathbb{R}: \|x\|_1 \leq s \}$.
Problem \eqref{eq:epil1} is equivalent to finding the root of an one-dimensional piecewise linear equation. By inspecting the KKT conditions (with $\lambda \ge 0$ being the Lagrangian multiplier), we have $y = \text{sign}(x)\odot \max(|x|-\lambda,0)$ (i.e., proximal operator for $\ell_1$ norm) and $t = \lambda +s$. 
Combining this with the complementary slackness condition and $\lambda>0$, the KKT conditions of \eqref{eq:epil1} reduce to the piecewise linear root-finding problem $ F(\lambda) = \sum_{i=1}^d \max(|x_i|-\lambda) - \lambda -s = 0$, which can be solved by the quick-select algorithm in linear time; see~\cite[Algorithm 2]{wang2016epigraph} for details. Otherwise, we have $\proj_{L^d_1}(x,s) = (x,s)$.

Recall that the  $\ell_1$ single-sample proximal point subproblem~\eqref{eq:l1ppa} takes the form
\begin{equation*}
\begin{aligned}
& \min_{w,\lambda,\mu}  \,\, \mu + \frac{1}{2\alpha} \left(\|w-\bar{w}\|_2^2 + (\lambda- \bar{\lambda})^2\right) \\
& \,\,\,\,\text{s.t.}~\,\,\, h_{i,j}(w,\lambda) \le \mu \,\, {\color{blue} ( \leftarrow \sigma_j \ge 0 )}, \,\, j=1,2,3; \,\,\, \|w\|_1 \leq \lambda, 
\end{aligned}
\end{equation*}
where $\sigma_1,\sigma_2,\sigma_3\ge0$ are the corresponding dual multipliers. 

\begin{itemize}
	\item Case 1: $h_{i,1},h_{i,3}$ are inactive. Then, problem \eqref{eq:l1ppa} can be written as
	\[\min_{w,\lambda}\, (1+w^Tz_i-\lambda\kappa) +  \frac{1}{2\alpha}\|w-\bar{w}\|_2^2 +\frac{1}{2\alpha}(\lambda-\bar{\lambda})^2,~\ \text{s.t.} \ \|w\|_1\leq \lambda. \]
	Hence, we have $(w^*,\lambda^*) =  \proj_{L^d_1}(\bar{w} -\alpha z_i, \bar{\lambda}+\alpha \kappa)$. 
	\item Case 2: $h_{i,1}$ is active and $h_{i,3}$ is inactive. Then, problem \eqref{eq:l1-c1} can be reduced to 
	\begin{equation}\label{eq:l1-case2} 
	\min_{w,\lambda}\frac{1}{2\alpha} ( \|w-\bar{w}-\alpha z_i\|_2^2 + (\lambda -  \bar{\lambda})^2 ),~\ \text{s.t.}~w^Tz_i\leq \frac{\lambda\kappa}{2}\,{\color{blue} ( \leftarrow \sigma_1 \ge 0 )},~\|w\|_1 \leq \lambda.
	\end{equation}
	\item Cases 3 and 4:  ($h_{i,1}$ is inactive, $h_{i,3}$ is active) and ($h_{i,2}$ is inactive, $h_{i,3}$ is active). These two cases are similar to Case 2 and give rise to a problem of the form \eqref{eq:l1ppa_sub}.
\end{itemize}
Now, let us demonstrate how to solve \eqref{eq:l1ppa_sub} efficiently. Recall that 
\begin{equation*} 	
\min_{w,\lambda} \frac{1}{2\alpha} ( \|w-\bar{w}\|_2^2 + (\lambda -  \bar{\lambda})^2 ),~\ \text{s.t.}~w^Tz \le a \lambda + b \,\,{\color{blue} (\leftarrow \sigma\ge0)},\,\,\,\|w\|_1 \leq \lambda.
\end{equation*}
\begin{proposition}\label{prop:sigma2}
Suppose that $\sigma_1^*$ is the dual optimal solution to \eqref{eq:l1-case2}. Then, we have $\sigma_1^*\in[0,1]$.
\end{proposition}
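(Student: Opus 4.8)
The plan is to deduce the bound not from the reduced problem~\eqref{eq:l1-case2} in isolation---where there is no evident way to control the multiplier from above---but by tracing it back to the epigraph reformulation~\eqref{eq:l1ppa}, whose stationarity condition in the auxiliary variable $\mu$ encodes precisely the normalization that confines the multiplier to $[0,1]$. The lower bound $\sigma_1^*\ge 0$ is immediate from dual feasibility, so the entire content is the upper bound $\sigma_1^*\le 1$.

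First I would write down the KKT system of~\eqref{eq:l1ppa}, attaching multipliers $\sigma_1,\sigma_2,\sigma_3\ge 0$ to the three epigraph constraints $h_{i,j}(w,\lambda)\le\mu$ and a multiplier $\nu\ge 0$ to $\|w\|_1\le\lambda$. Differentiating the Lagrangian in $\mu$ yields the key identity $\sigma_1+\sigma_2+\sigma_3=1$; geometrically, this merely records that the subgradient of the piecewise-affine maximum $h_i=\max_j h_{i,j}$ selected at the optimum is a convex combination of the gradients $\nabla h_{i,j}$ over the active pieces. In the case that gives rise to~\eqref{eq:l1-case2}, the piece $h_{i,1}$ is active (so it is folded into the objective, producing the shifted center $\bar w+\alpha z_i$) while $h_{i,3}$ is inactive. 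Complementary slackness then forces $\sigma_3=0$, leaving $\sigma_1+\sigma_2=1$ with both multipliers nonnegative, hence $\sigma_2\in[0,1]$.

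Next I would verify that the single inequality-constraint multiplier of~\eqref{eq:l1-case2}---the object the proposition calls $\sigma_1^*$, attached to $h_{i,2}(w,\lambda)\le h_{i,1}(w,\lambda)$ in its simplified form $w^Tz_i\le\tfrac{\lambda\kappa}{2}$---coincides with $\sigma_2$ from the full problem. Concretely, I would compare the stationarity equations in $w$ and in $\lambda$ of the two formulations: after substituting $\mu=h_{i,1}$ into~\eqref{eq:l1ppa} and using $\sigma_1=1-\sigma_2$, the resulting $w$- and $\lambda$-conditions must reproduce those of~\eqref{eq:l1-case2} with the \emph{same} $\ell_1$-constraint multiplier $\nu$. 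This matching pins down $\sigma_1^*$ as the weight that the subgradient of $h_i$ places on the piece $h_{i,2}$, i.e.\ $\sigma_1^*=\sigma_2$, and therefore $\sigma_1^*=1-\sigma_1\in[0,1]$.

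The main obstacle is precisely this bookkeeping across the reformulation. One must carefully eliminate $\mu$ through the active piece $h_{i,1}$, fold its affine part into the quadratic (which is what converts $\bar w$ into $\bar w+\alpha z_i$), and track how the algebraic normalization of the active constraint $h_{i,2}\le h_{i,1}$ rescales its multiplier relative to $\sigma_2$, so that the identification is stated for the correctly normalized form of the constraint. Equal care is needed to confirm that the shared constraint $\|w\|_1\le\lambda$ carries the identical multiplier $\nu$ in both KKT systems; only then does matching the $w$- and $\lambda$-stationarity equations legitimately yield the exact identity $\sigma_1^*=\sigma_2$ rather than a mere inequality. Once these correspondences are in place, the sum-to-one relation extracted from the $\mu$-stationarity delivers the stated bound $\sigma_1^*\in[0,1]$.
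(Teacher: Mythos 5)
Your argument is essentially the paper's own proof: the paper likewise differentiates the Lagrangian of \eqref{eq:l1ppa} in $\mu$ to obtain $\sigma_1+\sigma_2+\sigma_3=1$, sets $\sigma_3^*=0$ by complementary slackness since $h_{i,3}$ is inactive, and concludes $\sigma_1^*\in[0,1]$ by ``matching the two KKT systems'' of \eqref{eq:l1-case2} and \eqref{eq:l1ppa}. Your extra care in identifying the reduced problem's multiplier with the weight $\sigma_2$ on the active piece $h_{i,2}$ --- including tracking how the normalization of the constraint $h_{i,2}\le h_{i,1}$ rescales that multiplier, and checking that the $\ell_1$-constraint multiplier is shared --- only makes explicit the bookkeeping that the paper's one-line matching step leaves implicit.
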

\begin{proof}
	Based on the KKT conditions of \eqref{eq:l1ppa}, we have 
	\[ 1 - \sigma_1 - \sigma_2 -\sigma_3 = 0. \]
	If the optimal solution to~\eqref{eq:l1-case2} is also optimal for \eqref{eq:l1ppa}, then we can match the two KKT systems. As $h_{i,3}$ is inactive for this case, we have $\sigma_3^* = 0$. This gives
	\[  \sigma_1^* +  \sigma_2^* =1, \sigma_1^*,\sigma_2^* \ge 0 \quad\Rightarrow\quad \sigma_1^*\in[0,1].\]
\end{proof}
Proposition \ref{prop:sigma2} also holds for Cases 3 and 4. The analytic bound in Proposition~\ref{prop:sigma2} shows that $\sigma^*$ can be efficiently found by an appropriate search strategy. Next, recall from~\eqref{eq:secant} that
	\begin{align*}
	 (\hat{w}(\sigma),\hat{\lambda}(\sigma)) &  = \mathop{\arg\min}_{\|w\|_1 \leq \lambda}  \frac{1}{2\alpha} \left(\|w-\bar{w}\|_2^2 + (\lambda- \bar{\lambda})^2\right)  + \sigma (w^Tz - a\lambda - b)\\
	 & = \proj_{L^d_1}(\bar{w}-\sigma\alpha z, \bar{\lambda} +\sigma \alpha a ). 
	\end{align*} 
The following proposition establishes the monotonicity property of $\sigma \mapsto p(\sigma) = \hat{w}(\sigma)^Tz-a\kappa - b$, which plays a vital role in our development of a fast algorithm for solving~\eqref{eq:l1ppa_sub} later. 
\begin{proposition}
	If $\sigma$ satisfies (i) $\sigma = 0$ and $p(\sigma)\leq 0$, or (ii) $p(\sigma) =0$, then $(\hat{w}(\sigma),\hat{\lambda}(\sigma))$ is the optimal solution to \eqref{eq:l1ppa_sub}. Moreover, $p(\cdot)$ is continuous and monotonically non-increasing on $\mathbb{R}_+$.
\end{proposition}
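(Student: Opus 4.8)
The plan is to treat \eqref{eq:l1ppa_sub} as a convex program---its objective is strongly convex and its feasible set, the intersection of the half-space $\{w^Tz\le a\lambda+b\}$ with the convex cone $L^d_1=\{(w,\lambda):\|w\|_1\le\lambda\}$, is convex---and to recognize $(\hat{w}(\sigma),\hat{\lambda}(\sigma))$ as the minimizer of the partial Lagrangian in which only the linear constraint is dualized with multiplier $\sigma\ge0$, while the cone constraint is retained. By the displayed identity preceding the proposition, this minimizer equals $\proj_{L^d_1}(\bar{w}-\sigma\alpha z,\bar{\lambda}+\sigma\alpha a)$. Consequently, $(\hat{w}(\sigma),\hat{\lambda}(\sigma))$ already satisfies the stationarity condition and the KKT conditions tied to the cone $L^d_1$ (being the exact projection, it is a KKT point of the cone-restricted problem), so the only remaining pieces of the full KKT system for \eqref{eq:l1ppa_sub} are primal feasibility of the linear constraint, namely $p(\sigma)\le0$, and complementary slackness $\sigma\, p(\sigma)=0$, where $p(\sigma)=\hat{w}(\sigma)^Tz-a\hat{\lambda}(\sigma)-b$ is the linear-constraint slack.

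For the first assertion I would simply verify these two remaining conditions under each hypothesis. Under (i) we have $\sigma=0$, so complementary slackness is automatic and $p(\sigma)\le0$ supplies feasibility; under (ii) we have $p(\sigma)=0$, which gives feasibility with equality and makes $\sigma\,p(\sigma)=\sigma\cdot0=0$ hold, while $\sigma\ge0$ is dual feasibility. In either case the full KKT system holds, and since the problem is convex these conditions are sufficient for global optimality, so $(\hat{w}(\sigma),\hat{\lambda}(\sigma))$ solves \eqref{eq:l1ppa_sub}.

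Continuity of $p$ follows immediately: the map $\sigma\mapsto(\bar{w}-\sigma\alpha z,\bar{\lambda}+\sigma\alpha a)$ is affine, the metric projection $\proj_{L^d_1}$ is nonexpansive hence continuous, and $p$ depends on $(\hat{w}(\sigma),\hat{\lambda}(\sigma))$ through the affine map $(\hat{w},\hat{\lambda})\mapsto\hat{w}^Tz-a\hat{\lambda}-b$, so the composition is continuous.

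The crux is monotonicity, where the key tool is the firm nonexpansiveness (equivalently, monotonicity) of the metric projection onto a convex set: $\langle\proj(u_2)-\proj(u_1),u_2-u_1\rangle\ge0$ for all $u_1,u_2$. Writing $u(\sigma)=(\bar{w}-\sigma\alpha z,\bar{\lambda}+\sigma\alpha a)$ and observing that $p(\sigma)=\langle(\hat{w}(\sigma),\hat{\lambda}(\sigma)),(z,-a)\rangle-b$, I would take $\sigma_2>\sigma_1$ and note that the perturbation direction $u(\sigma_2)-u(\sigma_1)=(\sigma_2-\sigma_1)\alpha(-z,a)$ is a positive multiple of $(-z,a)=-(z,-a)$. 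Substituting into the monotonicity inequality yields $\langle(\hat{w}(\sigma_2),\hat{\lambda}(\sigma_2))-(\hat{w}(\sigma_1),\hat{\lambda}(\sigma_1)),(-z,a)\rangle\ge0$, and multiplying by $-1$ gives exactly $p(\sigma_2)-p(\sigma_1)\le0$. The only real care needed is to align the sign of the perturbation direction with the vector $(z,-a)$ defining $p$, so that the feasibility-gap function becomes the inner product of the projection difference against the negated perturbation; once this bookkeeping is done the desired inequality is immediate, and no explicit computation of the projection is required.
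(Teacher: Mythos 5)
Your proposal is correct, and the continuity argument coincides with the paper's (nonexpansiveness of $\proj_{L^d_1}$ composed with affine maps). The interesting divergence is in the monotonicity step. The paper proves it by a value-function exchange argument: writing $h(\sigma)=\frac{1}{2\alpha}\bigl(\|\hat{w}(\sigma)-\bar{w}\|_2^2+(\hat{\lambda}(\sigma)-\bar{\lambda})^2\bigr)$ and using that $(\hat{w}(\sigma_i),\hat{\lambda}(\sigma_i))$ minimizes the penalized objective at parameter $\sigma_i$, it chains the two optimality inequalities
\[
h(\sigma_1)+\sigma_1 p(\sigma_1)\le h(\sigma_2)+\sigma_1 p(\sigma_2)
\quad\text{and}\quad
h(\sigma_2)+\sigma_2 p(\sigma_2)\le h(\sigma_1)+\sigma_2 p(\sigma_1),
\]
and cancels to get $(\sigma_1-\sigma_2)\bigl(p(\sigma_1)-p(\sigma_2)\bigr)\le 0$. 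You instead exploit the closed-form representation $(\hat{w}(\sigma),\hat{\lambda}(\sigma))=\proj_{L^d_1}(\bar{w}-\sigma\alpha z,\,\bar{\lambda}+\sigma\alpha a)$ together with the monotonicity of the metric projection, $\langle\proj(u_2)-\proj(u_1),u_2-u_1\rangle\ge0$, noting that the input moves along the direction $(-z,a)$ while $p$ is the inner product with $(z,-a)$ shifted by $-b$; your sign bookkeeping is right and the inequality comes out correctly. The tradeoff: the paper's exchange argument is more general---it needs only that $(\hat{w}(\sigma),\hat{\lambda}(\sigma))$ minimizes a Lagrangian-type objective over a fixed set, with no projection structure at all, so it would survive if the quadratic were replaced by any convex proximity term---whereas your argument is tied to the projection representation but is geometrically transparent and, as a bonus, the same firm-nonexpansiveness bound immediately gives a Lipschitz estimate on $p$, which the paper's argument does not. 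One further point in your favor: the paper's appendix proof silently omits the first assertion (optimality under (i) or (ii)), whereas you supply it via the standard Lagrangian-sufficiency/KKT verification (with the cone constraint kept explicit and only the linear constraint dualized), which is exactly the missing piece; you also correctly read $p(\sigma)=\hat{w}(\sigma)^Tz-a\hat{\lambda}(\sigma)-b$, repairing what is evidently a typo ($a\kappa$ for $a\hat{\lambda}(\sigma)$) in the paper's definition of $p$.
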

\begin{proof}
	As 	$ \proj_{L^d_1}(\cdot,\cdot)$ is globally Lipschitz continuous, the function $(\hat{w}(\cdot),\hat{\lambda}(\cdot))$ is also globally
	Lipschitz continuous and further $p(\cdot)$ is continuous. Next, we prove the monotonicity property. Upon letting $h(\sigma) = \frac{1}{2\alpha} \left(\|\hat{w}(\sigma)-\bar{w}\|_2^2 + (\hat{\lambda}(\sigma)- \bar{\lambda})^2\right)$ and assuming that $0\leq \sigma_1<\sigma_2 \leq1$, we have 
	\begin{align*}
	h(\sigma_1) + \sigma_1 p(\sigma_1) & \leq h(\sigma_2) + \sigma_1 p(\sigma_2) \\& =  h(\sigma_2) + \sigma_2 p(\sigma_2) + (\sigma_1-\sigma_2) p(\sigma_2) \\
	& \leq  h(\sigma_1) + \sigma_2 p(\sigma_1) + (\sigma_1-\sigma_2) p(\sigma_2),
	\end{align*}
which implies that $p(\sigma_1) \ge p(\sigma_2)$.
\end{proof}

\begin{algorithm}[]
	\SetAlgoLined
	\caption{A modified secant algorithm to solve the RHS of \eqref{eq:l1ppa_sub}---$\text{MSA}(\bar{w},\bar{\lambda},z_i, a,b,\xi)$}
	\KwIn{Tolerance Level $\xi$\,;} 	
	\lIf{$p(0) \leq \xi$}{return $(\hat{w}(0),\hat{\lambda}(0),0)$\,}
	\Else{$\sigma_l = 0$, $r_l = -p(0)$\,; \quad \tcp{Set the lower bound $\sigma_l$ for $\sigma^*$}
		\lIf{$p(1) \ge 0$}{return $-1$\,} \tcp{$\sigma^* \in [0,1]$; see Proposition \ref{prop:sigma}}
		\Else{$\sigma_u = 1$, $r_u = -p(1)$\,; \quad\tcp{Set the upper bound  $\sigma_u$ for $\sigma^*$}}
	}
	\tcc{Secant Phase}
	$s = 1- \frac{r_l}{r_u}$, $\sigma = \sigma_u - \frac{\sigma_u-\sigma_l}{s}$; calculate $r = -p(\sigma)$\,;
	
	\While{$|r|>\xi$}{
		calculate $r = -p(\sigma)$; \quad\tcp{$\ell_1$ epigraph projection via the quick-select algorithm}
		{\uIf{$r>0$}{
				\lIf{$s\leq 2$}
				{$\sigma_u = \sigma$, $r_u= r$, $s = 1- \frac{r_l}{r_u}$, $\sigma = \sigma_u - \frac{\sigma_u-\sigma_l}{s}$
				}\Else{
					$s = \max(\frac{r_u}{r}-1,0.1)$, $\Delta\sigma= \frac{\sigma_u-\sigma}{s}$, $\sigma_u =\sigma$, $r_u = r$\,\;
					$\sigma = \max(\sigma_u-\Delta\sigma, 0.6\sigma_l+0.4\sigma_u)$, $s = \frac{\sigma_u-\sigma_l}{\sigma_u-\sigma}$\,\;
			}}\Else{
			\lIf{$s\ge 2$}
			{$\sigma_l = \sigma$, $r_l = r$, $s = 1- \frac{r_l}{r_u}$, $\sigma = \sigma_u - \frac{\sigma_u-\sigma_l}{s}$}
				\Else{$s = \max(\frac{r_l}{r}-1,0.1)$, $\Delta\sigma=\frac{\sigma-\sigma_l}{s}$, $\sigma_l =\sigma$, $r_l =r$\,\;
					$\sigma = \max(\sigma_l+\Delta\sigma, 0.6\sigma_u+0.4\sigma_l)$, $s= \frac{\sigma_u-\sigma_l}{\sigma_u-\sigma}$\,\;}}
		}
	}
	\label{algo:MSA}
\end{algorithm}

\begin{algorithm}[]
	\SetAlgoLined
	\caption{A fast algorithm based on parametric approach to solve \eqref{eq:l1ppa} }
	\KwIn{Tolerance Level $\xi$; parameters $\bar{w},\bar{\lambda}, z_i, \epsilon,\kappa$\,;}
	\tcc{Case 1: $h_{i,1},h_{i,3}$ are inactive}
	$(w^*,\lambda^*) =  \proj_{L^d_1}(\bar{w} -\alpha z_i, \bar{\lambda}+\alpha \kappa)$\,\; 	
	\lIf{$ \langle w^*,z \rangle >\max(\lambda^*\kappa-1,\frac{\lambda^*\kappa}{2})$}
	{return $(w^*,\lambda^*)$\,} \tcp{Check optimality}
	\tcc{Case 2: $h_{i,1}$ is active; $h_{i,3}$ is inactive}
	$(w^*,\lambda^*,\sigma^*) = \text{MSA}(\bar{w}+\alpha z_i,\bar{\lambda},z_i, \kappa/2,0)$\,; \tcp{Apply the modified secant algorithm \ref{algo:MSA}}
	\lIf{$ \langle w^*,z \rangle <1$ ~\&~ $\sigma^* \in [0,1]$}
	{return $(w^*,\lambda^*)$\,}
	\tcc{Case 3:$h_{i,1}$ is inactive; $h_{i,3}$ is active}
	$(w^*,\lambda^*,\sigma^*) = \text{MSA}(\bar{w},\bar{\lambda},z_i, \kappa,-1)$\,\;
	\lIf{$ \langle w^*,z \rangle >1$ ~\&~ $\sigma^* \in [0,1]$}
	{return $(w^*,\lambda^*)$\,}
	\tcc{Case 4: $h_{i,2}$ is inactive; $h_{i,3}$ is active}
	$(w^*,\lambda^*,\sigma^*) = \text{MSA}(\bar{w},\bar{\lambda},-z_i, 0,-1)$\,\;
	\lIf{$\lambda^*\kappa >2$ ~\&~ $\sigma^* \in [0,1]$}
	{return $(w^*,\lambda^*)$\,}
	\tcc{Case 5: $h_{i,1},h_{i,2},h_{i,3}$ are active}
	\Else{$w^* = \mathop{\arg\min}\limits_w\{\|w-\bar{w}\|_2^2,~\ \text{s.t.}\,\, w^Tz_i = 1, \|w\|_1 \leq \frac{2}{\kappa}\}$, $\lambda^* = \frac{\kappa}{2}$ \;
	\tcc{Apply the modified secant algorithm in \cite{dai2006new}}}
	\label{algo:l1sub}
\end{algorithm}

\subsection*{B: Convergence Rate Analysis of Incremental Algorithms}
We now give a condition under which problem \eqref{eq:drsvm_our} with $q=2$ satisfies the sharpness or quadratic growth (QG) property.  Consider the following more general formulation of the $\ell_2$-DRSVM problem:
\begin{equation}
\min_{w,\lambda} \frac{c}{2}\|w\|_2^2 +  \frac{1}{n}\sum\limits_{i=1}^n f_i(w,\lambda)+ \mathbb{I}_{ \{(w,\lambda) \in L_2^d\}}, 
\label{eq:general}
\end{equation}
where $f_1,\ldots,f_n$ are non-smooth convex functions with polyhedral epigraphs. Our condition is based the following lemma:
\begin{lemma}	\label{le:blr}
	Let $C_1, \ldots, C_N $ be closed convex subsets of $\mathbb{R}^n$ , where $C_{r+1}, \ldots, C_{N}$ are polyhedral for some $r \in \{0,1,\ldots,N\}$. Suppose that
	\[ \bigcap_{i=1}^r \ri(C_i)\cap\bigcap_{i=r+1}^NC_i \neq  \emptyset.\]
	Then, the collection $\{C_1,\ldots,C_N\}$ is boundedly linearly regular (BLR). 
\end{lemma}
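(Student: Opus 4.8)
The plan is to prove the defining error bound of BLR by combining the sets two at a time, isolating the role of polyhedrality in a single final step. Write $E := \bigcap_{i=1}^r C_i$ for the intersection of the (possibly non-polyhedral) sets and $D := \bigcap_{i=r+1}^N C_i$ for the intersection of the polyhedral ones, so that $C = E \cap D$ and $D$ is itself polyhedral. The four classical tools I would invoke are: Hoffman's error bound (a finite collection of polyhedra is boundedly linearly regular); the relative-interior intersection rule $\ri\left(\bigcap_i C_i\right) = \bigcap_i \ri(C_i)$, valid whenever $\bigcap_i \ri(C_i) \neq \emptyset$; the two-set theorem that $\ri(A_1) \cap \ri(A_2) \neq \emptyset$ implies $\{A_1,A_2\}$ is BLR; and the refined two-set theorem that if $A_2$ is polyhedral, then the \emph{weaker} qualification $\ri(A_1) \cap A_2 \neq \emptyset$ already implies $\{A_1,A_2\}$ is BLR. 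Since such estimates compose — if $\dist(x, A_1 \cap A_2) \le \kappa \max(\dist(x,A_1),\dist(x,A_2))$ on a bounded set $B$ and each $\dist(x,A_j)$ is in turn controlled by $\max_i \dist(x,C_i)$ on $B$, then so is $\dist(x, A_1 \cap A_2)$ — the whole argument reduces to producing a short chain of such estimates on an arbitrary bounded $B$.

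I would carry out the steps in the following order. First, by Hoffman's bound the polyhedral subcollection is BLR, giving $\dist(x, D) \le \kappa_0 \max_{r<i\le N} \dist(x, C_i)$ on $B$. Second, let $\bar z$ be a point furnished by the hypothesis, so $\bar z \in \bigcap_{i=1}^r \ri(C_i)$ and $\bar z \in D$. Applying the relative-interior two-set theorem iteratively to $C_1,\ldots,C_r$ — and using the relative-interior intersection rule at each stage to certify that $\bar z$ remains in the relative interior of the running intersection, so the qualification survives — I obtain that $\{C_1,\ldots,C_r\}$ is BLR, i.e. $\dist(x, E) \le \kappa_1 \max_{1\le i\le r} \dist(x, C_i)$ on $B$, and moreover $\bar z \in \ri(E)$. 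Third, I combine $E$ with the polyhedron $D$: since $\bar z \in \ri(E) \cap D$, the refined polyhedral two-set theorem yields $\dist(x, C) = \dist(x, E \cap D) \le \kappa_2 \max(\dist(x, E), \dist(x, D))$ on $B$. Chaining the three estimates gives $\dist(x, C) \le \kappa \max_{1\le i\le N} \dist(x, C_i)$ with $\kappa$ depending only on $B$, which is exactly BLR.

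The main obstacle — and the only place polyhedrality is genuinely essential — is the third step: the two-set result for a general closed convex set $E$ and a polyhedron $D$ under the weak qualification $\ri(E) \cap D \neq \emptyset$ rather than $\ri(E) \cap \ri(D) \neq \emptyset$. This is precisely the sense in which the lemma's hypothesis is weaker than a plain relative-interior condition, and it is what prevents one from treating all $N$ sets symmetrically; without polyhedrality, a relative-interior contact that meets $\partial D$ can destroy the \emph{linear} rate and leave only a weaker (e.g.\ Hölderian) error bound, consistent with the example in the main text. I would therefore spend most of the effort on this refined estimate, for instance by passing to the polyhedral tangent cone of $D$ at the relevant boundary point and applying a Hoffman-type bound there, while the relative-interior contact controls the complementary directions supplied by $E$. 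The secondary point to check carefully is that the relative-interior intersection rule is applied legitimately at every stage of the iteration in the second step, so that the qualification is genuinely inherited rather than merely assumed.
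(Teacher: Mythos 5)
Your proposal is correct in outline, but note first that the paper contains no proof of this lemma to compare against: it is quoted essentially verbatim from the same Bauschke--Borwein survey cited for Definition~\ref{def:blr}, where it appears as a corollary of their linear-regularity results, and the paper imports it as known. Measured on its own merits, your reconstruction is a sound and standard bootstrap: Hoffman's bound makes the polyhedral subfamily globally linearly regular and $D=\bigcap_{i>r}C_i$ is again polyhedral; the induction over $C_1,\dots,C_r$ using the two-set theorem under $\ri(A_1)\cap\ri(A_2)\neq\emptyset$, with Rockafellar's rule $\ri\bigl(\bigcap_i C_i\bigr)=\bigcap_i \ri(C_i)$ certifying at each stage that $\bar z$ stays in the relative interior of the running intersection, is legitimate; and the final pairing of $E=\bigcap_{i\le r}C_i$ with $D$ via $\bar z\in\ri(E)\cap D$ plus the chaining of BLR estimates on a fixed bounded set $B$ (all constants depending only on $B$) correctly yields $\dist(x,C)\le\kappa\max_i\dist(x,C_i)$ on $B$. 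The degenerate cases $r=0$ and $r=N$ also fall out of your scheme, since $D=\mathbb{R}^n$ is polyhedral and the qualification then holds trivially.

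The one caveat worth stating plainly: the two ``two-set theorems'' you invoke are exactly the $N=2$ instances of the lemma being proved, so what you have is a reduction of the general case to the pairwise case, not a self-contained proof. If the pairwise results are granted from the literature (they are available in the survey the paper cites), your argument is complete, and arguably more informative than the paper's bare citation because it isolates where polyhedrality enters. If instead you intend to establish the refined case $\ri(E)\cap D\neq\emptyset$ with $D$ polyhedral from scratch, your closing sketch --- tangent cone of $D$ plus a Hoffman-type bound, with the relative-interior contact handling complementary directions --- is a plausible program rather than an argument; the standard proofs proceed through strong CHIP and conical duality, and making your sketch rigorous would require proving a linear estimate near the contact face, which is the entire content of the result. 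Your diagnosis that this is the only step where polyhedrality is essential, and that without it the linear rate can degrade to a H\"olderian one, is exactly right and consistent with the paper's $q=2$ example.
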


\begin{proposition} Consider problem~\eqref{eq:general}. Let $\mathcal{X}$ be the set of optimal solutions and $L_2^d = \{(w,\lambda)\in\mathbb{R}^d\times\mathbb{R}:\|w\|_2 \le \lambda\}$ be the constraint set. Suppose that $\mathcal{X}\cap\ri(L_2^d) \neq \emptyset$. Then, problem~\eqref{eq:drsvm_our} satisfies the sharpness condition when $c=0$ and the QG condition when $c>0$.
\end{proposition}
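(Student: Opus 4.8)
The plan is to peel off the non-polyhedral second-order cone constraint from the essentially polyhedral objective by means of the bounded linear regularity supplied by Lemma~\ref{le:blr}. Write $g(w,\lambda) = \tfrac{c}{2}\|w\|_2^2 + \tfrac{1}{n}\sum_{i=1}^n f_i(w,\lambda)$ and let $F$ denote the full objective of~\eqref{eq:general}, so that $F=g$ on the feasible cone $L_2^d$ and $\mathcal{X} = \argmin_{x\in L_2^d} g(x)$ with optimal value $f^*$. The first observation I would make is that, because $L_2^d$ is full-dimensional, the hypothesis $\mathcal{X}\cap\ri(L_2^d)\neq\emptyset$ furnishes an optimal point $\hat{x}\in\mathrm{int}(L_2^d)$ at which the conic constraint is inactive; first-order optimality then forces $\hat{x}$ to be an \emph{unconstrained} minimizer of $g$, whence $f^* = \min_{x} g(x)$ and therefore $\{x : g(x)\le f^*\} = \argmin g$.

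Next I would verify that $\argmin g$ is polyhedral in both regimes. When $c=0$, $g$ is a finite sum of piecewise-affine convex functions, hence polyhedral, and its argmin is a polyhedron. When $c>0$, the term $\tfrac{c}{2}\|w\|_2^2$ renders $g$ strictly convex in $w$, so every minimizer shares a common $w$-component $w^*$; fixing $w=w^*$ leaves a piecewise-affine convex function of $\lambda$ whose minimizers form an interval, so $\argmin g = \{w^*\}\times\Lambda^*$ is again polyhedral. Consequently, in either case $C_2 := \{x : g(x)\le f^*\} = \argmin g$ is polyhedral while $C_1 := L_2^d$ is closed convex, and $\mathcal{X} = C_1\cap C_2$.

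I would then invoke Lemma~\ref{le:blr} with $r=1$: the hypothesis gives $\ri(C_1)\cap C_2 \supseteq \mathcal{X}\cap\ri(L_2^d)\neq\emptyset$, so $\{C_1,C_2\}$ is BLR. Fixing any bounded set $\mathcal{B}$ (which in the convergence analysis contains the iterates), BLR produces a constant $\kappa>0$ with $\dist(x,\mathcal{X})\le\kappa\max\{\dist(x,L_2^d),\dist(x,\argmin g)\}$ for all $x\in\mathcal{B}$; restricting to feasible $x\in L_2^d\cap\mathcal{B}$ annihilates the first term, leaving $\dist(x,\mathcal{X})\le\kappa\,\dist(x,\argmin g)$. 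It remains to bound $\dist(x,\argmin g)$ by a power of $g(x)-f^*$. Viewing $\min_x g(x)$ as a minimization over the polyhedron $\mathbb{R}^{d+1}$: for $c=0$, the sharpness of polyhedral convex functions~\cite[Theorem 3.5]{burke1993weak} gives $\dist(x,\argmin g)\le\beta(g(x)-f^*)$, i.e.\ $\theta=1$; for $c>0$, the strongly-convex-plus-polyhedral structure lets~\cite[Proposition 6]{zhou2017unified} deliver the quadratic-growth bound $\dist(x,\argmin g)\le\beta(g(x)-f^*)^{1/2}$, i.e.\ $\theta=\tfrac12$. Composing the two estimates yields $\dist(x,\mathcal{X})\le\kappa\beta\,(F(x)-f^*)^{\theta}$ on $\mathcal{B}$, which is precisely sharpness when $c=0$ and QG when $c>0$ in the sense of Definition~\ref{def:HEB}.

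I expect the crux to be the reduction step rather than the concluding error bound: the point is to recognize that the relative-interior hypothesis is exactly what collapses the constrained value to the unconstrained one, $f^*=\min g$, since this is what turns the sublevel set $\{g\le f^*\}$ into the \emph{polyhedral} set $\argmin g$ and thereby licenses the polyhedral branch of Lemma~\ref{le:blr} even in the $c>0$ case. The only other delicate point is the polyhedrality of $\argmin g$ for $c>0$, where one leans on strict convexity in $w$ to pin down a unique $w^*$ before resolving the residual piecewise-affine problem in $\lambda$.
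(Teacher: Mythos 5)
Your proof is correct and follows essentially the same route as the paper's: use the relative-interior hypothesis to show the constrained optimum coincides with the unconstrained one (so $\mathcal{X}$ is the intersection of $L_2^d$ with the unconstrained argmin), apply Lemma~\ref{le:blr} with $L_2^d$ as the non-polyhedral set and the unconstrained solution set as the polyhedral one to get BLR, and then compose with the sharpness bound of \cite[Theorem 3.5]{burke1993weak} for $c=0$ and the quadratic-growth bound from \cite[Proposition 6]{zhou2017unified} for $c>0$. If anything, you are slightly more careful than the paper, which for $c>0$ never explicitly verifies that the unconstrained solution set is polyhedral (as Lemma~\ref{le:blr} requires); your strict-convexity-in-$w$ argument pinning down a unique $w^*$ and leaving a one-dimensional piecewise-affine problem in $\lambda$ fills that small gap.
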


\begin{proof}
	Let $x=(w,\lambda)$, $h(x) = \frac{c}{2}\|w\|_2^2+\frac{1}{n}\sum\limits_{i=1}^n f_i(w,\lambda)$, and $g(x) = h(x)+\mathbb{I}_{ \{x \in L_2^d\}}$. Consider the case where $c=0$. The set $\mathcal{X}$ can then be written as
	\[ \mathcal{X} = \{x: 0 \in \partial h(x) + \mathcal{N}_{L_2^d}(x)\},\]
	where $\mathcal{N}_{L_2^d}(x)$ is  the normal cone of $L_2^d$ at $x$.  
	As $\mathcal{X} \cap \ri(L_2^d) \neq \emptyset$, we can find an $x^* \in \mathcal{X} \cap \ri(L_2^d)$ that satisfies $0 \in \partial h(x^*)$. Thus, $x^*$ is also an optimal solution to the unconstrained problem $\min\limits_x h(x)$ and $h(x^*) = g(x^*)$. 
	
	Let $\mathcal{X}_U$ denote the set of optimal solutions to the problem $\min\limits_x h(x)$. It is not difficult to check that 
	\[ \mathcal{X} = \mathcal{X}_U \cap L_2^d.\]
	Since $f_1,\ldots,f_n$ have polyhedral epigraphs, by Lemma \ref{le:blr}, the collection $\{\mathcal{X}_U, L_2^d\}$  is BLR. This implies that there exists a constant $\kappa >0$ satisfying
	\[\dist(x, \mathcal{X}) = \dist(x, \mathcal{X}_U \cap L_2^d) \leq \kappa \dist(x, \mathcal{X}_U),~\ \forall x \in   L_2^d.\]
	Furthermore, the problem $\min_x h(x)$ enjoys the sharpness property; see~\cite[Corollary 3.6]{burke1993weak}. This gives
	\[ g(x)-g^* = h(x)-h^* \ge \sigma \dist(x,\mathcal{X}_U) \ge \frac{\sigma}{\kappa}\dist(x,\mathcal{X}),~\ \forall x\in  L_2^d. \]
	For $c>0$, we note that the problem $\min_x h(x)$ can be regarded as one with a polyhedral convex regularizer; see~\cite[Section 4.2]{zhou2017unified}. As such, it satisfies a proximal error bound (see~\cite[Proposition 6]{zhou2017unified}) and hence the QG condition (see~\cite[Theorem 4.1]{li2018calculus}). It follows that
		\[ g(x)-g^* = h(x)-h^* \ge \sigma \dist^2(x,\mathcal{X}_U) \ge \frac{\sigma}{\kappa^2}\dist^2(x,\mathcal{X}), ~\ \forall x\in  L_2^d. \]
\end{proof}
%

To derive the convergence rates of the incremental algorithms, we also need the following assumption.
\begin{assumption}[Subgradient boundedness]
There exists a scalar $L>0$ such that 
\[ \|\nabla f_i(x)\| \leq L,~\ \forall~ \nabla f_i(x) \in \partial f_i(x),\ i \in [n].\]
\label{assp:l}
\end{assumption}
\vspace{-7mm}
\begin{lemma}[ISG; see~{\cite[Lemma 2.1]{nedic2001incremental}}]
	Suppose that Assumption \ref{assp:l} holds and $\{x^k=(w_0^k,\lambda_0^k)\} $ is a sequence generated by ISG. Then, for all $y$ and $k\ge 0$, we have 
	\[
	\|x^{k+1} -y\|_2^2 \leq \|x^k-y\|_2^2 - 2\alpha_kn(f(x^k)-f(y)) + a_k^2 n^2L^2. 
	\]
	\label{ISG}
\end{lemma}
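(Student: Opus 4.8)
The plan is to follow the classical incremental subgradient analysis of Nedić--Bertsekas and track how the squared distance $\|x_i^k - y\|_2^2$ to an arbitrary feasible reference point $y$ evolves across one full epoch. Writing $X = \{(w,\lambda):\|w\|_q\le\lambda\}$ for the (closed convex) feasible set and denoting by $x_0^k,x_1^k,\ldots,x_n^k$ the inner iterates of epoch $k$ (so that $x^k = x_0^k$ and, by the cyclic convention, $x^{k+1} = x_n^k$), I would first establish a one-step recursion and then sum it over the epoch. For concreteness I take mini-batches of size one, which is the setting matching the stated constants; the general case follows by replacing each component with its batch average.

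For the one-step recursion I fix any $y\in X$ and use that $\proj_X$ is nonexpansive together with $\proj_X(y)=y$:
\begin{align*}
\|x_{i+1}^k - y\|_2^2
&= \|\proj_X[x_i^k - \alpha_k g_i^k] - \proj_X(y)\|_2^2 \\
&\le \|x_i^k - \alpha_k g_i^k - y\|_2^2 \\
&= \|x_i^k - y\|_2^2 - 2\alpha_k \langle g_i^k, x_i^k - y\rangle + \alpha_k^2\|g_i^k\|_2^2 .
\end{align*}
Applying the subgradient inequality for the component $f_{i+1}$ processed at inner step $i$ gives $\langle g_i^k, x_i^k - y\rangle \ge f_{i+1}(x_i^k) - f_{i+1}(y)$, while Assumption~\ref{assp:l} yields $\|g_i^k\|_2^2 \le L^2$. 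Summing over $i=0,\ldots,n-1$ telescopes the distance terms and produces
\[
\|x_n^k - y\|_2^2 \le \|x_0^k - y\|_2^2 - 2\alpha_k\sum_{i=0}^{n-1}\big(f_{i+1}(x_i^k) - f_{i+1}(y)\big) + n\alpha_k^2 L^2 .
\]

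The main obstacle is that each $f_{i+1}$ is evaluated at the intermediate iterate $x_i^k$ rather than at the epoch-start point $x^k=x_0^k$, so the middle sum is not yet $n\big(f(x^k)-f(y)\big)$. I would fix this by the splitting
\[
\sum_{i=0}^{n-1}\big(f_{i+1}(x_i^k) - f_{i+1}(y)\big) = n\big(f(x^k)-f(y)\big) + \sum_{i=0}^{n-1}\big(f_{i+1}(x_i^k) - f_{i+1}(x^k)\big),
\]
using $\tfrac1n\sum_{i=1}^n f_i = f$. The error term is controlled by Lipschitz continuity: bounded subgradients make each $f_{i+1}$ $L$-Lipschitz, and nonexpansiveness of $\proj_X$ with $\|g_j^k\|_2\le L$ gives the per-step displacement bound $\|x_{j+1}^k - x_j^k\|_2 \le \alpha_k L$, hence $\|x_i^k - x^k\|_2 \le i\alpha_k L$. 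Consequently the error term is at least $-L^2\alpha_k\sum_{i=0}^{n-1} i = -\tfrac12 n(n-1)\alpha_k L^2$.

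Finally I would substitute this bound back. The minus sign in front of the middle sum turns the $-\tfrac12 n(n-1)\alpha_k L^2$ error into a $+n(n-1)\alpha_k^2 L^2$ contribution, which combines with the $+n\alpha_k^2 L^2$ term from the subgradient norms to give exactly $+n^2\alpha_k^2 L^2$. Together with $x_n^k = x^{k+1}$ and $x_0^k = x^k$, this yields the claimed inequality of Lemma~\ref{ISG}. The delicate point throughout is the careful accounting of the displacement $\|x_i^k - x^k\|_2$, so that the two distinct error contributions assemble precisely into the clean $n^2 L^2$ constant rather than a loose bound.
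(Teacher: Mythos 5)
Your proof is correct and is essentially the same argument as the paper's source: the paper does not reprove this lemma but cites \cite[Lemma 2.1]{nedic2001incremental}, whose proof is exactly your one-step nonexpansiveness recursion, epoch summation, and splitting of $\sum_{i}\bigl(f_{i+1}(x_i^k)-f_{i+1}(x^k)\bigr)$ controlled via the displacement bound $\|x_i^k-x^k\|_2\le i\alpha_k L$, and it is also the same template the paper uses for its IPPA counterpart (Lemma~\ref{IPPA}, where the error is evaluated at $x_{i+1}^k$ and thus yields $n(n+1)L^2$ instead of your exact $n(n-1)+n=n^2$ accounting). Your restriction to $y\in X$ (so that $\proj_X(y)=y$) is the correct reading of ``for all $y$'' in the statement, so there is no gap.
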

\vspace{-7mm}
\begin{lemma}[IPPA]
   Suppose that Assumption \ref{assp:l} holds and $\{x^k=(w_0^k,\lambda_0^k)\} $ is a sequence generated by IPPA. Then, for all $y$ and $k\ge 0$, we have 
	\[
	\|x^{k+1} -y\|_2^2 \leq \|x^k-y\|_2^2 - 2\alpha_kn(f(x^k)-f(y)) + a_k^2 n(n+1)L^2. 
	\]
	\label{IPPA}
\end{lemma}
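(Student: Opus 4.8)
The plan is to mirror the ISG estimate (Lemma~\ref{ISG}) but to track carefully the fact that in IPPA the relevant subgradient is evaluated at the \emph{new} inner iterate rather than the old one, which is exactly what turns the $n^2$ in the ISG remainder into $n(n+1)$. Write $x_i^k=(w_i^k,\lambda_i^k)$ for the inner iterates of the $k$-th epoch, so that $x_0^k=x^k$ and $x_n^k=x^{k+1}$, let $L_q^d=\{(w,\lambda):\|w\|_q\le\lambda\}$ denote the constraint set, and abbreviate $\alpha=\alpha_k$. I would first recast the single-sample update~\eqref{eq:IPPA} as an implicit projected subgradient step. The first-order optimality condition of the strongly convex subproblem defining $x_i^k$ reads $0\in\partial f_i(x_i^k)+\mathcal{N}_{L_q^d}(x_i^k)+\tfrac{1}{\alpha}(x_i^k-x_{i-1}^k)$ (the subdifferential sum rule applies since $f_i$ is real-valued and continuous). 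Hence there is a subgradient $s_i\in\partial f_i(x_i^k)$ with $-\tfrac{1}{\alpha}(x_i^k-x_{i-1}^k)-s_i\in\mathcal{N}_{L_q^d}(x_i^k)$, which is precisely the variational inequality characterizing a projection; that is, $x_i^k=\proj_{L_q^d}\!\big(x_{i-1}^k-\alpha s_i\big)$.

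The second step is a sharp per-step displacement bound. Every inner iterate is feasible (each is itself a projection onto $L_q^d$), so $\proj_{L_q^d}(x_{i-1}^k)=x_{i-1}^k$, and nonexpansiveness of the projection gives
\[
\|x_i^k-x_{i-1}^k\|=\big\|\proj_{L_q^d}(x_{i-1}^k-\alpha s_i)-\proj_{L_q^d}(x_{i-1}^k)\big\|\le\alpha\|s_i\|\le\alpha L,
\]
using Assumption~\ref{assp:l}; summing then yields $\|x_i^k-x_0^k\|\le i\alpha L$. Next, from the projection variational inequality together with the subgradient inequality $\langle s_i,y-x_i^k\rangle\le f_i(y)-f_i(x_i^k)$ evaluated at $x_i^k$, I would derive, for any feasible $y\in L_q^d$, the three-point inequality
\[
\|x_i^k-y\|_2^2\le\|x_{i-1}^k-y\|_2^2-\|x_i^k-x_{i-1}^k\|_2^2+2\alpha\big(f_i(y)-f_i(x_i^k)\big).
\]

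Finally, I would drop the nonpositive term $-\|x_i^k-x_{i-1}^k\|_2^2$, telescope the distance terms over $i=1,\dots,n$, and convert each $f_i(x_i^k)$ back to $f_i(x^k)$ via the Lipschitz estimate $f_i(x_i^k)\ge f_i(x_0^k)-L\|x_i^k-x_0^k\|\ge f_i(x^k)-i\alpha L^2$. Using $\tfrac{1}{n}\sum_i f_i=f$ collapses $\sum_i\big(f_i(y)-f_i(x^k)\big)$ into $n\big(f(y)-f(x^k)\big)$, while the accumulated Lipschitz correction contributes $2\alpha\cdot\alpha L^2\sum_{i=1}^n i=\alpha^2L^2\,n(n+1)$, which is exactly the stated remainder. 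The main obstacle is the displacement bound: because the constraint $\|w\|_q\le\lambda$ injects a normal-cone term into the proximal optimality condition, the implicit subgradient $\tfrac{1}{\alpha}(x_{i-1}^k-x_i^k)$ is \emph{not} bounded by $L$ in general, so one cannot argue as directly as in the ISG case. Recognizing the update as a projection of a genuine $\partial f_i$-subgradient step and exploiting feasibility of $x_{i-1}^k$ with nonexpansiveness is what delivers the tight $\alpha L$ per-step bound, and hence the exact constant $n(n+1)$ rather than a looser one.
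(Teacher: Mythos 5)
Your proof is correct and follows essentially the same route as the paper: a per-step three-point inequality for the constrained proximal update (which the paper simply cites as Proposition 1 of Bertsekas's incremental proximal methods paper, while you rederive it via the projection reformulation $x_i^k=\proj_{L_q^d}(x_{i-1}^k-\alpha s_i)$ with $s_i\in\partial f_i(x_i^k)$), followed by telescoping, the Lipschitz conversion $f_i(x_i^k)\ge f_i(x_0^k)-i\alpha L^2$ using the displacement bound $\|x_i^k-x_0^k\|\le i\alpha L$, and the summation $\sum_{i=1}^n i=\tfrac{n(n+1)}{2}$ yielding the $\alpha_k^2 n(n+1)L^2$ remainder. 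Your explicit nonexpansiveness argument for the per-step bound $\|x_i^k-x_{i-1}^k\|\le\alpha L$ is a welcome addition, since the paper uses this bound implicitly without justification.
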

\vspace{-7mm}
\begin{proof}
Based on Proposition 1 in \cite{bertsekas2011incremental} with $x^k_i = (w^k_i,\lambda^k_i)$, we have 
\begin{align*}
\|x^{k}_{i+1} -y\|_2^2 \leq \|x^k_{i}-y\|_2^2 - 2\alpha_k(f_{i+1}(x^{k}_{i+1})-f_{i+1}(y)),~\ \forall i = 0, \ldots, n-1.
\end{align*}
Summing up, 
\begin{align*}
\|x^{k}_n -y\|_2^2 &\leq  \|x^k_{0}-y\|_2^2 -  2\alpha_k\sum_{i=0}^{n-1}(f_{i+1}(x^{k}_{i+1})-f_{i+1}(y))\\
& = \|x^k_{0}-y\|_2^2 -  2\alpha_k\sum_{i=0}^{n-1}(f_{i+1}(x^{k}_{i+1})-f_{i+1}(x^{k}_{0})+f_{i+1}(x^{k}_{0})-f_{i+1}(y)) \\ 
& = \|x^k_{0}-y\|_2^2 -  2\alpha_kn(f(x^{k}_{0})-f(y))-2\alpha_k\sum_{i=0}^{n-1}(f_{i+1}(x^{k}_{i+1})-f_{i+1}(x^{k}_{0})) \\
& \leq  \|x^k_{0}-y\|_2^2 -  2\alpha_kn(f(x^{k}_{0})-f(y))+2\alpha_kL\sum_{i=0}^{n-1}\|x^{k}_{i+1}-x^{k}_{0}\|_2 \\
& \leq \|x^k_{0}-y\|_2^2 -  2\alpha_kn(f(x^{k}_{0})-f(y))+2\alpha_k^2L^2\sum_{i=0}^{n-1}(i+1) \\
& \leq \|x^k_{0}-y\|_2^2 -  2\alpha_kn(f(x^k_0)-f(y)) + a_k^2 n(n+1)L^2. 
\end{align*}
\end{proof}
Combining Lemmas \ref{ISG} and \ref{IPPA}, we have 
\[
	\|x^{k+1} -y\|_2^2 \leq \|x^k-y\|_2^2 - 2\alpha_kn(f(x^k)-f(y)) +2 a_k^2 n^2L^2. 
\]
\begin{theorem} Let $\{x^k=(w_0^k,\lambda_0^k)\}$ be the sequence of iterates generated by ISG or IPPA.
	\begin{enumerate}[(1)] 
		\item If problem \eqref{eq:drsvm_our} satisfies the \emph{sharpness} condition, then by choosing the geometrically diminishing step sizes $\alpha_k = \alpha_0 \rho^k$ with $\alpha_0 \ge \tfrac{\sigma\dist(x^{0},\mathcal{X})}{2L^2n}$ and $\sqrt{1-\tfrac{\sigma^2}{2L^2}} \leq \rho <1$, the sequence $\{x^k\}$ converges linearly to an optimal solution to~\eqref{eq:drsvm_our}; i.e., $\dist(x^k,\mathcal{X})\leq \mathcal{O}(\rho^k)$ for all $k\ge0$.
		\vspace{-2mm}
		\item If problem \eqref{eq:drsvm_our} satisfies the \emph{quadratic growth} condition, then by choosing the polynomially decaying step sizes $\alpha_k = \tfrac{\gamma}{nk}$ with $\gamma>\frac{1}{2\sigma}$, the sequence $\{x^k\}$ converges to an optimal solution to~\eqref{eq:drsvm_our} at the rate $\mathcal{O}(\tfrac{1}{\sqrt{k}})$ and $\{f(x^k)-f^*\}$ converges to zero at the rate $\mathcal{O}(\tfrac{1}{k})$. 
		\vspace{-2mm}
		\item (See~\cite[Proposition 2.10]{nedic2001convergence}) For the general convex problem~\eqref{eq:drsvm_our}, by choosing the step sizes $\alpha_k = \tfrac{\gamma}{n\sqrt{k}}$ with $\gamma>0$, the sequence $\displaystyle\{\min_{0\leq k\leq K} f(x^k) -f^*\}$ converges to zero at the rate $\mathcal{O}(\tfrac{1}{\sqrt{K}})$.
	\end{enumerate}
\end{theorem}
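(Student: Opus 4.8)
The plan is to base all three parts on the single one-step recursion obtained by combining Lemmas~\ref{ISG} and~\ref{IPPA}, namely
\[
\|x^{k+1}-y\|_2^2 \le \|x^k-y\|_2^2 - 2\alpha_k n\bigl(f(x^k)-f(y)\bigr) + 2\alpha_k^2 n^2 L^2,
\]
valid for every $y$ under Assumption~\ref{assp:l}. In each part I would instantiate $y$ as an appropriate optimal point and feed in the matching growth estimate from Proposition~\ref{prop:blr1}. For part~(1) I would take $y=\proj_{\mathcal{X}}(x^k)$, so that $\|x^{k+1}-y\|_2\ge\dist(x^{k+1},\mathcal{X})$ and $\|x^k-y\|_2=\dist(x^k,\mathcal{X})$. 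Writing $d_k=\dist(x^k,\mathcal{X})$ and inserting sharpness $f(x^k)-f^*\ge\sigma d_k$ turns the recursion into $d_{k+1}^2\le d_k^2-2\alpha_k n\sigma d_k+2\alpha_k^2n^2L^2$. The crux is then to prove the invariant $d_k\le\tfrac{2nL^2}{\sigma}\alpha_k$ by induction: the base case is exactly the stated lower bound on $\alpha_0$, and since the right-hand side is a convex quadratic in $d_k$, its maximum over $[0,\tfrac{2nL^2}{\sigma}\alpha_k]$ is attained at an endpoint; evaluating at $d_k=\tfrac{2nL^2}{\sigma}\alpha_k$ reduces the target inequality $d_{k+1}\le\tfrac{2nL^2}{\sigma}\rho\alpha_k$ to precisely $\rho^2\ge1-\tfrac{\sigma^2}{2L^2}$, which closes the induction and yields $d_k\le\tfrac{2nL^2\alpha_0}{\sigma}\rho^k=\mathcal{O}(\rho^k)$.

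For part~(2) I would again set $y=\proj_{\mathcal{X}}(x^k)$ but now use the QG estimate $f(x^k)-f^*\ge\sigma d_k^2$, giving the contraction $d_{k+1}^2\le\bigl(1-\tfrac{2\gamma\sigma}{k}\bigr)d_k^2+\tfrac{2\gamma^2L^2}{k^2}$ after substituting $\alpha_k=\tfrac{\gamma}{nk}$. Because $\gamma>\tfrac{1}{2\sigma}$ forces $2\gamma\sigma>1$, the ansatz $d_k^2\le M/k$ with $M\ge\max\{d_1^2,\tfrac{2\gamma^2L^2}{2\gamma\sigma-1}\}$ propagates under the induction, so $d_k=\mathcal{O}(1/\sqrt{k})$. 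To obtain the $\mathcal{O}(1/k)$ function-value rate I would rearrange the recursion (with a fixed $y=x^\star\in\mathcal{X}$) into $f(x^k)-f^\star\le\tfrac{k}{2\gamma}(d_k^2-d_{k+1}^2)+\tfrac{\gamma L^2}{k}$ and average with weights $w_k=k$; an Abel summation gives $\sum_{k\le K}k^2(d_k^2-d_{k+1}^2)\le d_1^2+\sum_{k\le K}(2k-1)\tfrac{M}{k}=\mathcal{O}(K)$, so that $\sum_{k\le K}w_k\bigl(f(x^k)-f^\star\bigr)=\mathcal{O}(K)$, and dividing by $\sum_{k\le K}w_k=\Theta(K^2)$ yields $\mathcal{O}(1/K)$ for the weighted-average gap.

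Part~(3) is the classical statement cited from~\cite[Proposition~2.10]{nedic2001convergence}. Summing the recursion with $y=x^\star$ over $k=1,\dots,K$ telescopes to $2n\sum_k\alpha_k(f(x^k)-f^\star)\le d_1^2+2n^2L^2\sum_k\alpha_k^2$; with $\alpha_k=\tfrac{\gamma}{n\sqrt{k}}$ one has $\sum_k\alpha_k=\Theta(\sqrt{K}/n)$ and $\sum_k\alpha_k^2=\mathcal{O}(\log K/n^2)$, so dividing through by $2n\sum_k\alpha_k$ bounds $\min_{0\le k\le K}(f(x^k)-f^\star)$ by $\mathcal{O}(1/\sqrt{K})$.

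The main obstacle is twofold. In part~(1), the delicate point is verifying that the induction closes \emph{exactly} under the prescribed constants: the lower bound on $\alpha_0$ and the lower bound on $\rho$ are tuned so that the base case and the inductive propagation hold simultaneously, and one must check that the left endpoint $d_k=0$ is also dominated (this uses $\sigma\le L$). In part~(2), the subtlety is bridging from the iterate rate $d_k=\mathcal{O}(1/\sqrt{k})$ to the advertised $\mathcal{O}(1/k)$ function-value rate: because the objective is nonsmooth (piecewise affine plus the quadratic regularizer), a direct Lipschitz bound only yields $f(x^k)-f^\star\le L\,d_k=\mathcal{O}(1/\sqrt{k})$ for the last iterate, so the weighted-averaging (equivalently, best-iterate) argument above is essential to remove the gap and avoid the spurious logarithmic factor that a uniform average would incur.
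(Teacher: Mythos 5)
Your argument for the iterate rates coincides with the paper's own proof: both parts rest on the combined recursion $\|x^{k+1}-y\|_2^2 \le \|x^k-y\|_2^2 - 2\alpha_k n\,(f(x^k)-f(y)) + 2\alpha_k^2 n^2 L^2$ from Lemmas on ISG and IPPA, followed by exactly the inductions you describe: $\dist(x^k,\mathcal{X}) \le \tfrac{2\alpha_0 L^2 n}{\sigma}\rho^k$ under sharpness and $\dist^2(x^k,\mathcal{X}) \le B/k$ with $B > \max\{\tfrac{2\gamma^2 L^2}{2\gamma\sigma-1}, \dist^2(x^0,\mathcal{X})\}$ under quadratic growth, with the same constants. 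One point where you are actually more careful than the paper: in part (1) the paper substitutes the inductive upper bound for $\dist(x^k,\mathcal{X})$ directly into the right-hand side, which is a convex quadratic in that distance, so strictly one must also dominate the left endpoint $d_k=0$; your observation that this requires $2\alpha_k^2 L^2 n^2 \le (\tfrac{2\alpha_0 L^2 n}{\sigma})^2\rho^{2(k+1)}$, i.e.\ $\rho^2 \ge \tfrac{\sigma^2}{2L^2}$, which follows from $\rho^2 \ge 1-\tfrac{\sigma^2}{2L^2}$ together with $\sigma \le L$, patches a step the paper glosses over.

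Two caveats. First, in part (2) the paper's proof in fact stops at the iterate rate and offers no argument at all for the claimed $\mathcal{O}(1/k)$ rate of $\{f(x^k)-f^*\}$; your weighted-averaging/Abel-summation construction is a genuine addition, but note that it establishes an $\mathcal{O}(1/K)$ rate only for the weighted average (equivalently the best iterate), not for the last iterate $f(x^k)-f^*$ as literally stated in the theorem — from the available ingredients (Lipschitzness plus $\dist(x^k,\mathcal{X})=\mathcal{O}(1/\sqrt{k})$) the last iterate only gets $\mathcal{O}(1/\sqrt{k})$, so neither your argument nor the paper's actually delivers the last-iterate claim, and your proposal should state which quantity it bounds. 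Also, when you telescope, take $y=\proj_{\mathcal{X}}(x^k)$ at each step (using $\dist(x^{k+1},\mathcal{X}) \le \|x^{k+1}-\proj_{\mathcal{X}}(x^k)\|_2$) rather than a fixed $x^\star$: your bound $d_k^2 \le M/k$ holds for $\dist(x^k,\mathcal{X})$, not for the distance to an arbitrary fixed optimal point when $\mathcal{X}$ is not a singleton. Second, in part (3) your quick telescoping with $\alpha_k = \tfrac{\gamma}{n\sqrt{k}}$ gives $\sum_k \alpha_k^2 = \Theta(\log K)/n^2$ and hence only $\mathcal{O}(\log K/\sqrt{K})$; removing the logarithm requires the more refined argument (e.g.\ summing over the window $[K/2,K]$) in the cited Proposition 2.10 of Nedić--Bertsekas, which is what the paper, reasonably, simply invokes.
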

\begin{proof}
(1):\\
By the sharpness condition $f(x_k)-f^* \ge \sigma \dist(x_k,\mathcal{X})$, we have
	\[
	\dist^2(x^{k+1},\mathcal{X}) \leq \dist^2(x^{k},\mathcal{X}) - 2\alpha_k \sigma n \dist(x^{k},\mathcal{X}) +2\alpha_k^2L^2n^2.
	\]
	We now prove by induction that
	\[ \dist(x^{k},\mathcal{X}) \leq \frac{2\alpha_0L^2n}{\sigma}\rho^k.\]
	The base case trivially holds, as $\dist(x^{0},\mathcal{X}) \leq \frac{2\alpha_0L^2n}{\sigma}$. For the inductive step, we compute
	\begin{equation}
	\begin{aligned}
	\dist^2(x^{k+1},\mathcal{X}) & \leq \left( \frac{2\alpha_0L^2n}{\sigma}\rho^k \right)^2 - 2\alpha_k \sigma n \frac{2\alpha_0L^2n}{\sigma}\rho^k+2\alpha_k^2L^2n^2 \\ 
	& =\frac{4\alpha_0^2L^4n^2}{\sigma^2}\rho^{2k}- 2\alpha_0^2L^2n^2\rho^{2k}\\
	& = \frac{4\alpha_0^2L^4n^2}{\sigma^2}\rho^{2k} \left( 1-\frac{\sigma^2}{2L^2} \right) \\
	& \leq \left( \frac{2\alpha_0L^2n}{\sigma} \right)^2\rho^{2(k+1)}.
	\end{aligned}
	\end{equation}
	This completes the proof.

	(2):
	By the quadratic growth condition $f(x_k)-f^* \ge \sigma \dist^2(x_k,\mathcal{X})$, we have 
	\[
	\dist^2(x^{k+1},\mathcal{X}) \leq (1-2\alpha_k \sigma n)\dist^2(x^{k},\mathcal{X}) +2\alpha_k^2L^2n^2.
	\]
	Plugging in the corresponding step size scheme $\alpha_k = \frac{\gamma}{nk}$, we obtain
		\[
	\dist^2(x^{k+1},\mathcal{X}) \leq (1-\frac{2\gamma\sigma}{k})\dist^2(x^{k},\mathcal{X}) + \frac{2\gamma^2L^2}{k^2}.
	\]
	We now prove by induction that
	\[ \dist^2(x^{k},\mathcal{X}) \leq \frac{B}{k}, \]
	where $B>0$ is a given number. Indeed, we have
	\begin{align*}
	\dist^2(x^{k+1},\mathcal{X}) & \leq \left( 1-\frac{2\gamma\sigma}{k} \right) \frac{B}{k}+\frac{2\gamma^2L^2}{k^2} \\ 
	 & = \frac{B}{k+1}+\frac{B}{k(k+1)} - \frac{2\gamma\sigma B}{k^2}+\frac{2\gamma^2L^2}{k^2}\\
	 & \leq  \frac{B}{k+1} + \frac{B}{k^2} - \frac{2\gamma\sigma B}{k^2}+\frac{2\gamma^2L^2}{k^2} \\	 
	 & = \frac{B}{k+1} + \frac{(1-2\gamma\sigma)B}{k^2}+\frac{2\gamma^2L^2}{k^2} \\
	 & \leq \frac{B}{k+1}, 
	\end{align*}
	where the last inequality holds if $ \frac{(1-2\gamma\sigma)B}{k^2}+\frac{2\gamma^2L^2}{k^2} <0$. Hence, we have $B > \frac{2\gamma^2L^2}{2\gamma\sigma-1}$ due to $\gamma>\frac{1}{2\sigma}$. Combining this with the base case, we have $B >  \max\{ \frac{2\gamma^2L^2}{2\gamma\sigma-1}, \dist^2(x^{0},\mathcal{X}) \}$. 
\end{proof}

\subsection*{C: Additional Experimental Results}
To begin with, we show how to extend the GS-ADMM framework in \cite{li2019first} to tackle our DRSVM problems, which serves as a baseline in Section \ref{sec:exp}. We reformulate problem \eqref{eq:drsvm_our} as 
\begin{equation}\label{eq:drsvm_s}
\begin{aligned}
& \underset{w,s,\lambda}{\text{min}}
& & \lambda\epsilon + \frac{1}{n} \sum_{i=1}^n s_i \\
&\,\,\, \text{s.t.}
& &  \ 1 - w^Tz_i \le s_i, i\in[n], \\
&&&  \ 1 + w^Tz_i + \lambda \kappa \le s_i , i\in[n], \\
&&&  \ s_i \ge 0, i\in[n], \\
&&& \ \|w\|_q \leq \lambda.
\end{aligned} 
\end{equation}
We follow the technique used in ~\cite[Proposition 3.1]{li2019first}.
\begin{proposition}
	Suppose that $(w^*,\lambda^*,s^*)$ is a global minimizer of \eqref{eq:drsvm_s}. Then, we have $\lambda^* \le \lambda^U = \tfrac{1}{\epsilon}$. 
\end{proposition}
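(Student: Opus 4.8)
The plan is to upper-bound the optimal value of \eqref{eq:drsvm_s} by plugging in a cheap feasible point, and then to use the nonnegativity of the slack variables $s_i$ to isolate the term $\lambda^*\epsilon$. First I would consider the trivial candidate $(w,\lambda,s) = (0,0,\mathbf{1})$, where $\mathbf{1}$ is the all-ones vector. With $w=0$ and $\lambda=0$, the first two families of constraints both reduce to $1 \le s_i$ (the $w^Tz_i$ and $\lambda\kappa$ terms vanish), the constraints $s_i \ge 0$ clearly hold, and the epigraphical constraint $\|0\|_q \le 0$ holds trivially. Hence taking $s_i = 1$ for every $i$ produces a feasible point, whose objective value equals $0\cdot\epsilon + \frac{1}{n}\sum_{i=1}^n 1 = 1$.

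Since $(w^*,\lambda^*,s^*)$ is assumed to be a global minimizer of \eqref{eq:drsvm_s}, its objective value cannot exceed that of any feasible point, and in particular
\[
\lambda^*\epsilon + \frac{1}{n}\sum_{i=1}^n s_i^* \le 1.
\]
Because each $s_i^*$ satisfies $s_i^* \ge 0$, the averaged sum is nonnegative and may be dropped from the left-hand side, yielding $\lambda^*\epsilon \le 1$, i.e.\ $\lambda^* \le \tfrac{1}{\epsilon} = \lambda^U$, as claimed.

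There is essentially no hard step here: the whole argument is a one-line comparison once the trivial feasible point has been identified, exactly in the spirit of the technique in \cite[Proposition 3.1]{li2019first}. The only point requiring a moment's care is verifying feasibility of the chosen point — specifically that the choice $\lambda = 0$ is compatible with the constraint $\|w\|_q \le \lambda$, which holds precisely because we simultaneously set $w = 0$. I would also remark that the constraint $\|w\|_q \le \lambda$ forces $\lambda \ge 0$ everywhere on the feasible region, so the quantity $\lambda^*\epsilon$ being bounded is genuinely nonnegative and the resulting bound is meaningful.
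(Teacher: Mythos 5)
Your proof is correct, and it takes a genuinely different --- and considerably more elementary --- route than the paper's. The paper proves the bound by duality: it observes that \eqref{eq:drsvm_s} satisfies MFCQ, writes out the full KKT system with multipliers $a_{ij}\ge 0$ and $\beta\ge 0$, and combines the stationarity conditions (in particular $a_{i1}+a_{i2}+a_{i3}=\tfrac{1}{N}$ and $\kappa\sum_i a_{i2}+2\beta\lambda=\epsilon$) with complementary slackness to reach the identity $\lambda^*=\tfrac{1}{\epsilon}\sum_i(\tfrac{1}{N}-a_{i3})(1-s_i^*)$, from which $\lambda^*\le\tfrac{1}{\epsilon}$ follows. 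You instead compare the optimal value against the trivial feasible point $(w,\lambda,s)=(0,0,\mathbf{1})$, whose objective equals $1$, and then drop the nonnegative slack average; your feasibility check is sound (with $w=0$ and $\lambda=0$ both hinge-type constraints reduce to $1\le s_i$, and $\|0\|_q\le 0$ holds), and both arguments implicitly need $\epsilon>0$ for $\lambda^U$ to be meaningful. Your route buys several things: it needs no constraint qualification or KKT machinery; it works verbatim for every $q\in\{1,2,\infty\}$, whereas the paper's computation is carried out for $q=2$ and requires a separate remark rewriting the norm constraint in polyhedral form for $q\in\{1,\infty\}$; and it extends without change to the regularized case $c>0$ (the added term $\tfrac{c}{2}\|w^*\|_2^2\ge 0$ only helps) and even to approximate minimizers, since any feasible point with objective at most $1+\delta$ satisfies $\lambda\le(1+\delta)/\epsilon$. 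Note also that your intermediate inequality $\lambda^*\epsilon\le 1-\tfrac{1}{n}\sum_i s_i^*$ recovers exactly the paper's penultimate bound $\lambda^*\le\tfrac{1}{\epsilon n}\sum_i(1-s_i^*)$. What the KKT route buys in exchange is the exact representation of $\lambda^*$ in terms of the optimal multipliers, but nothing in the stated proposition requires that extra information.
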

\begin{proof}
For simplicity, we consider the case where $q=2$. Since problem \eqref{eq:drsvm_s} satisfies the Managasarian-Fromovitz Constraint Qualification (MFCQ), the KKT conditions are necessary and sufficient. Let $a_{ij} \ge 0, \forall j\in[3], i\in[N]$ and $\beta \ge 0$ be the dual variables. Then, we can write down the KKT conditions as follows:
	\begin{equation}
	\label{LKKT}
	\left\{ 
	\begin{aligned}
	& \sum_{i=1}^N (a_{i2}-a_{i1})z_i+2\beta w = 0,\\
	& a_{i1}+a_{i2}+a_{i3} = \frac{1}{N}, \forall i \leq N,\\
	& \kappa\sum\limits_{i=1}^N a_{i2} +2\beta\lambda= \epsilon,\\
	& a_{i1}(1 - w^Tz_i - s_i) = 0, \\
	& a_{i2}(1 + w^Tz_i-\lambda\kappa - s_i) = 0, \\
	& a_{i3}s_i = 0, \\
	& {\color{blue}{\beta(||w||_2^2-\lambda^2) = 0}}. 
	\end{aligned}
	\right.
	\end{equation}
	Based on \eqref{LKKT}, we have 
	\[
	\begin{aligned}
	\mathbf{0} & = \sum\limits_{i=1}^N (a_{i2}-a_{i1})w^Tz_i + 2\beta||w||_2^2 \\ 
	&  =\sum\limits_{i=1}^N (a_{i2}-a_{i1})w^Tz_i + 2\beta\lambda^2 
	= \sum\limits_{i=1}^N(a_{i2}-a_{i1})w^Tz_i+\lambda(\epsilon-\kappa\sum\limits_{i=1}^N a_{i2})\\
	& = \lambda\epsilon+\sum\limits_{i=1}^Na_{i2}(w^Tz_i-\lambda\kappa)-\sum\limits_{i=1}^Na_{i1}w^Tz_i  = \lambda\epsilon+\sum\limits_{i=1}^N (a_{i2}+a_{i1})(s_i-1).
	\end{aligned}
	\]
	Thus, we have 
	\[
	\lambda =  \frac{1}{\epsilon}\sum\limits_{i=1}^N (a_{i2}+a_{i1})(1-s_i) = \frac{1}{\epsilon}\sum\limits_{i=1}^N (\frac{1}{N}-a_{i3})(1-s_i) \le \frac{1}{\epsilon N}\sum\limits_{i=1}^N (1-s_i) \le \frac{1}{\epsilon}.
	\]
\end{proof}

\begin{Remark}
Although we focus on the case where $q=2$ in this proof, we can easily extend the techniques to study the case where $q\in\{1,\infty\}$. We just need to modify the blue part in~\eqref{LKKT}. Specifically, observe that $\|w\|_1 \leq \lambda$ is equivalent to $Bw\leq \lambda e_{2^d}$, where $B$ is the $2^d \times d $ matrix whose rows are all the possible arrangements of $+1$'s and $-1$'s. On the other hand, $\|w\|_\infty \leq \lambda$ is equivalent to $e_i^Tw \leq \lambda$, $-e_i^Tw \leq \lambda$, $\forall i \in [n]$. 
\end{Remark}
	
 Subsequently, we develop a standard ADMM algorithm to address the $w$-subproblem
\[
\begin{aligned}
& \min_{w}\frac{1}{n}\sum\limits_{i=1}^n \max \left\{1-w^Tz_i, 1+w^Tz_i-\lambda\kappa,0\right\},~\text{s.t.} \ \|w\|_q \leq \lambda. 
\end{aligned}
\]
We apply the operator splitting technique to reformulate it as 
\[
\begin{aligned}
& \underset{w,y}{\text{min}}
& &\frac{1}{n}\sum\limits_{i=1}^n \max \left\{1-y_i, 1+y_i-\lambda\kappa,0\right\} \\
&\,\, \text{s.t.}
& & Zw-y=0,\;\\
&&& \|w\|_q \leq \lambda.
\end{aligned} 
\]
\begin{algorithm}[!htbp]\label{algo:ADMM}
	\SetAlgoLined
	\caption{ADMM for solving $w$-subproblem}
	\KwIn{Choose value $(w^0,y^0,g^0) \in \mathbb{R}^d \times \mathbb{R}^n \times \mathbb{R}^n$; \\
		\quad  \quad \quad Initialized the penalty parameter $\rho_0$ and shrinking parameter $\gamma \geq 1$;} 
	\KwOut{$\{(w^k,y^k,g^k)\}_{k=1}^K$ and function value sequences; }
	\For{each iteration }{
	\tcc{Accelerated projected gradient algorithm, see \cite[Algorithm 5]{li2019first}}
	$w^{k+1} = \arg\min\limits_{\|w\|_q \leq \lambda} \left\{\frac{\rho_k}{2}\|Zw-y^k+\frac{g^k}{\rho_k}\|_2^2\right\}$\;
	\tcc{Closed-form update}
	$y^{k+1} =\arg\min\limits_{y \in \mathbb{R}^n} \left\{ \frac{1}{n}\sum\limits_{i=1}^n \max \left\{1-y_i, 1+y_i-\lambda\kappa,0\right\} + \frac{\rho_k}{2}\|y-Zw^{k+1} -\frac{g^k}{\rho_k}\|_2^2\right\} $\;
	\tcc{Dual update}
	$ g^{k+1} = g^{k}+\rho_k(Zw^{k+1}-y^{k+1})$\;
	$\rho_{k+1} = \gamma\rho_{k}$\;

}
\end{algorithm}
\paragraph{Single-sample proximal point update for $c>0$} Recall that 
\begin{equation*}
\begin{aligned}
& \underset{w,\lambda}{\text{min}}
& & \  \frac{c}{2}\|w\|_2^2 + \max\left\{1-w^Tz_i, 1+w^Tz_i-\lambda\kappa,0\right\} + \frac{1}{2\alpha}(\|w-\bar{w}\|_2^2 + (\lambda-\bar{\lambda})^2) \\
& \,\,\text{s.t.}
& & \ \|w\|_q \leq \lambda.
\end{aligned} 
\end{equation*}
Note that $\mu = \frac{\lambda}{\sqrt{1+\alpha c}}$ and $\kappa'  =\kappa\sqrt{1+\alpha c}$. The above problem can be written as
\begin{equation}\label{eq:csub}
\begin{aligned}
& \underset{w,\mu}{\text{min}}
& & \   \max\left\{1-w^Tz_i, 1+w^Tz_i-\mu\kappa',0\right\} + \frac{1+ac}{2\alpha}\left(\left\|w-\frac{\bar{w}}{1+ac}\right\|_2^2 + (\mu-\bar{\mu})^2\right) \\
& \,\,\text{s.t.}
& & \ \|w\|_q \leq \sqrt{1+\alpha c} \mu. 
\end{aligned} 
\end{equation}
Indeed, problem \eqref{eq:csub} shares the same structure as problem \eqref{eq:drsvm_ppa}.

\begin{table}[H]
	\centering
	\caption{Wall-clock Time Comparison on UCI  Real Dataset: $\ell_1$-DRSVM, $c=1,\kappa=1,\epsilon=0.1$}
	\setlength{\tabcolsep}{4.5pt}
	\begin{tabular}{@{}ccccclcccc@{}}
		\toprule
		{\color{white}{\multirow{2}{*}{Dataset}}} & \multicolumn{4}{c}{Objective Value} &  & \multicolumn{4}{c}{Wall-clock time (sec)} \\ \cmidrule(l){2-10} 
		& M-ISG & IPPA & Hybrid & YALMIP &  & M-ISG & IPPA & Hybrid & YALMIP \\ \midrule
		a1a & 0.7871456 & 0.7871450 & \textbf{0.7871445} & 0.7871462 &  & \textbf{1.233} & 2.169 & 1.545 & 16.495 \\
		a2a & 0.8002882 & \textbf{0.8002879} & \textbf{0.8002879} & 0.8003101 &  & \textbf{0.602} & 3.546 & 0.720 & 23.557 \\
		a3a & 0.7826654 & \textbf{0.7826653} & \textbf{0.7826653} & \textbf{0.7826653} &  & \textbf{3.687} & 4.626 & 3.696 & 32.575 \\
		a4a & \textbf{0.7929724} & \textbf{0.7929724} & \textbf{0.7929724} & 0.7929959 &  & \textbf{0.579} & 4.109 & 0.713 & 62.456 \\
		a5a & 0.7852845 & 0.7852848 & \textbf{0.7852844} & 0.7853440 &  & \textbf{0.639} & 2.929 & 1.156 & 109.620 \\
		a6a & \textbf{0.7764425} & \textbf{0.7764425} & \textbf{0.7764425} & 0.7767701 &  & \textbf{1.272} & 5.756 & 1.576 & 185.080 \\
		a7a & 0.7822116 & \textbf{0.7822114} & 0.7822115 & 0.7827171 &  & \textbf{2.121} & 4.336 & 2.976 & 270.480 \\
		a8a & \textbf{0.7805498} & \textbf{0.7805498} & \textbf{0.7805498} & 0.7836023 &  & \textbf{2.502} & 10.184 & 2.798 & 372.050 \\
		a9a & 0.7767114 & \textbf{0.7767099} & 0.7767113 & 0.7791881 &  & \textbf{3.018} & 9.295 & 6.203 & 642.160 \\ \bottomrule
	\end{tabular}
\end{table}

\begin{table}[H]
	\centering
	\caption{Wall-clock Time Comparison on UCI  Real Dataset: $\ell_\infty$-DRSVM, $c=1,\kappa=1,\epsilon=0.1$}
		\setlength{\tabcolsep}{4.5pt}
	\begin{tabular}{@{}ccccclcccc@{}}
		\toprule
		{\color{white}{\multirow{2}{*}{Dataset}}} & \multicolumn{4}{c}{Objective Value} &  & \multicolumn{4}{c}{Wall-clock time (sec)} \\ \cmidrule(l){2-10} 
		& M-ISG & IPPA & Hybrid & YALMIP &  & M-ISG & IPPA & Hybrid & YALMIP \\ \midrule
		a1a & 0.7853266 & \textbf{0.7853265} & \textbf{0.7853265} & 0.7853269 &  & \textbf{0.510} & 0.707 & 0.687 & 12.928 \\
		a2a & 0.7987669 & 0.7987666 & 0.7987667 & \textbf{0.7987663} &  & \textbf{0.861} & 1.233 & 1.182 & 20.850 \\
		a3a & 0.7810149 & \textbf{0.7810140} & 0.7810145 & 0.7810568 &  & \textbf{0.350} & 0.666 & 0.563 & 28.494 \\
		a4a & \textbf{0.7913534} & \textbf{0.7913534} & \textbf{0.7913534} & 0.7913963 &  & \textbf{0.540} & 1.100 & 0.679 & 63.037 \\
		a5a & 0.7836246 & \textbf{0.7836189} & 0.7836246 & 0.7836478 &  & \textbf{0.737} & 1.421 & 0.897 & 96.314 \\
		a6a & 0.7748542 & \textbf{0.7748533} & 0.7748537 & 0.7759606 &  & \textbf{1.203} & 2.250 & 1.763 & 201.510 \\
		a7a & \textbf{0.7806894} & \textbf{0.7806894} & \textbf{0.7806894} & 0.7811091 &  & \textbf{1.753} & 3.244 & 2.124 & 370.510 \\
		a8a & 0.7789801 & \textbf{0.7789800} & \textbf{0.7789800} & 0.7850996 &  & \textbf{2.390} & 4.568 & 2.981 & 365.140 \\
		a9a & \textbf{0.7750633} & \textbf{0.7750633} & \textbf{0.7750633} & 0.7776798 &  & \textbf{3.368} & 6.706 & 4.371 & 753.330 \\ \bottomrule
	\end{tabular}
\end{table}

\begin{Remark}
For $\ell_2$-DRSVM problems, it is worth mentioning that the $c>0$ case is identical to the $c=0$ case from a modeling perspective, which depends on the different robustness levels $\epsilon$ and $\kappa$. 

 \end{Remark}
\end{document}